\begin{document}

\title{Compact four-manifolds with pinched self-dual Weyl curvature}


\author{Inyoung Kim}
\maketitle
\begin{abstract}
We consider compact oriented four-manifolds with harmonic self-dual Weyl curvature 
 in addition to a pinching condition. 
\end{abstract}

\maketitle

\section{\large\textbf{Introduction}}\label{S:Intro}
Let $M$ be a compact K\"ahler-Einstein surface with nonpositive holomorphic bisectional curvature. 
Siu-Yang [18] showed $M$ is biholomorphically isometric to a compact quotient of the complex 2-ball with an invariant metric if
for some $a<\frac{2}{3(1+\sqrt{6/11})}$
\[H_{av}-H_{min}\leq a(H_{max}-H_{min}),\]
where $H_{max}(H_{min})$ denotes the maximum(minimum) of the holomorphic sectional curvature at each point respectively and $H_{av}$ the average
of the holomorphic sectional curvature at each point. 

Guan [11] showed that a compact K\"ahler-Einstein surface with nonpositive scalar curvature is a compact quotient
of the complex 2-dimensional unit ball or plane if 
\[H_{av}-H_{min}\leq \frac{1}{2}(H_{max}-H_{min}).\]

Let $(M, g)$ be an oriented riemannian four-manifold. 
The curvature operator $\mathfrak{R}:\Lambda^{2}\to\Lambda^{2}$ is given by 
 according to the decomposition of $\Lambda^{2}=\Lambda^{+}\oplus \Lambda^{-}$ as follows

\[ 
\mathfrak{R}=
\LARGE
\begin{pmatrix}

 \begin{array}{c|c}
\scriptscriptstyle{W^{+}\hspace{5pt}\scriptstyle{+}\hspace{5pt}\frac{s}{12}}I& \scriptscriptstyle{ric_{0}}\\
 \hline
 
 \hspace{10pt}
 
 \scriptscriptstyle{ric_{0}}& \scriptscriptstyle{W^{-}\hspace{5pt}\scriptstyle{+}\hspace{5pt}\frac{s}{12}}I\\
 \end{array}
 \end{pmatrix}
 \]
 where $s$ is the scalar curvature. 
 A 2-form $\omega\in\Lambda^{+}$ is self-dual if $*\omega=\omega$ and $\omega\in\Lambda^{-}$ is anti-self-dual  if $*\omega=-\omega$,
 where $*$ is the Hodge star operator defined by $\left<*\omega_{1}, \omega_{2}\right>dvol=\omega_{1}\wedge\omega_{2}$.
 When $ric_{0}=0$, $(M, g)$ is Einstein. 
 When $W^{\pm}=0$, $g$ is an anti-self-dual(self-dual) metric respectively.
 
\vspace{10pt}

\hrule

\vspace{10pt}
$\mathbf{Keywords}$: four-manifold, self-dual Weyl curvature, K\"ahler-Einstein

$\mathbf{MSC}$: 53C20, 53C21, 53C55

\vspace{10pt}

Republic of Korea, 

Email address: kiysd5@gmail.com

Let $(M, g)$ be an oriented riemannian four-manifold and let $\delta W^{+}:=-\nabla\cdot W^{+}$.
Then $(M, g)$ is said to have a harmonic self-dual Weyl curvature if $\delta W^{+}=0$.
Suppose $(M, g)$ is a compact oriented riemannian four-manifold with harmonic self-dual Weyl curvature
and nonpositive scalar curvature. 
Let $\lambda_{i}$ be the eigenvalues of $W^{+}$ such that $\lambda_{1}\leq\lambda_{2}\leq\lambda_{3}$. Suppose 
\[-8\left(1-\frac{\sqrt{3}}{2}\right)\lambda_{1}\leq \lambda_{3}\leq -2\lambda_{1}.\]
Polombo showed that $(M, g)$ is anti-self-dual [17]. We note that 
$-8\left(1-\frac{\sqrt{3}}{2}\right)\lambda_{1}\leq \lambda_{3}$ implies $\lambda_{1}+\lambda_{3}\geq 0$. 

Let $\lambda_{i}$ be the eigenvalues of $W^{-}$ such that $\lambda_{1}\leq \lambda_{2}\leq \lambda_{3}$. 
Then $\lambda_{1}+\lambda_{2}+\lambda_{3}=0$. 
We show that holomorphic sectional curvatures on a K\"ahler-Einstein surface are given by
\[H_{max}=\frac{s}{6}+\frac{\lambda_{3}}{2}, \hspace{10pt} H_{min}=\frac{s}{6}+\frac{\lambda_{1}}{2}. \]

Let $d\sigma$ be the volume form for the sphere of radius 1
and let $H$ be the holomorphic sectional curvature of a K\"ahler surface. 
The average of the holomorphic sectional curvature at each point is defined by 
\[H_{av}(p):=\frac{1}{vol(\mathbb{S}^{n-1})}\int_{S_{p}}Hd\sigma,\]
where $S_{p}$ is the unit sphere of $T_{p}M$. 
By Berger's Theorem [4], $H_{av}=\frac{s}{6}$. 
Thus, in case of K\"ahler-Einstein surfaces, the curvature condition
\[H_{av}-H_{min}\leq \frac{1}{2}(H_{max}-H_{min})\]
is equivalent to
\[\lambda_{1}+\lambda_{3}\geq 0.\]

Since $\lambda_{1}+\lambda_{2}+\lambda_{3}=0$, this is equivalent to $\lambda_{2}\leq 0$. 
Since $\lambda_{1}\leq 0$ and $\lambda_{3}\geq 0$, this curvature condition is equivalent to $det W^{-}=\lambda_{1}\lambda_{2}\lambda_{3}\geq 0$. 

Let $(M, g)$ be a compact oriented four-manifold with harmonic self-dual Weyl curvature.
It was shown that $(M, g)$ or its double cover is conformal to a K\"ahler metric with positive scalar curvature when $det W^{+}>0$ by LeBrun [15] and Wu [19].
In this paper, following Guan's method, we show that a compact riemannian four-manifold with harmonic self-dual Weyl curvature
and nonpositive scalar curvature is anti-self-dual if $det W^{+}\geq 0$. This generalizes results by Guan [11] and Polombo [17]. 

\vspace{20pt}

\section{\large\textbf{K\"ahler-Einstein surfaces and holomorphic sectional curvature}}\label{S:Intro}
In this section, we classify compact K\"ahler-Einstein surfaces with certain holomorphic sectional curvature conditions. 
Let $(M, g)$ be a riemannian manifold and $\nabla$ be the Levi-Civita connection. 
The curvature tensor is defined by 
\[R(X, Y)Z:=\nabla_{X}\nabla_{Y}Z-\nabla_{Y}\nabla_{X}Z-\nabla_{[X, Y]}Z.\]
The symmetry properties of the curvature tensor implies $R$ defines a symmetric bilinear map
 $R:\Lambda^{2}M\times\Lambda^{2}M\to\mathbb{R}$ such that 
\[R(X\wedge Y, Z\wedge W):=g(R(X, Y)W, Z),\]
where $\Lambda^{2}M$ is the set of bivectors. 
Then we have 
\[g(\mathfrak{R}(X\wedge Y, V\wedge W)=R(X\wedge Y, V\wedge W).\]

Let $(M, g, \omega)$ be a K\"ahler surface and $\{P, P^{\perp}\}$ be orthogonal holomorphic planes such that 
$P=\{X, JX\} $, $P^{\perp}=\{Y, JY\}$, where $X, Y$ are unit tangent vectors such that $X\perp Y, X\perp JY$. 
Then the orthogonal holomorphic bisectional curvature is defined by 
\[B(X, Y):=R(X\wedge JX, Y\wedge JY)\]
and the holomorphic sectional curvature by 
\[H(X, JX):=R(X\wedge JX, X\wedge JX).\]
Let $(M, g)$ be a riemannian four-manifold and let $\{P, P^{\perp}\}$ be the orthogonal planes. 
The biorthogonal curvature is defined by 
\[K^{\perp}(P, P^{\perp}):=\frac{K(P)+K(P^{\perp})}{2}\]
where $K(P):=R(X\wedge Y, X\wedge Y)$ is the sectional curvature on $P$ for unit tangent vectors $X, Y$ such that $P=\{X, Y\}$.

Let $(M, g, J)$ be an almost-Hermitian four-manifold. Then $g(X, Y)=g(JX, JY)$ for tangent vectors $X, Y\in TM$. 
Let $\omega(X, Y):=g(JX, Y)$. If $d\omega=0$, then $(M, g, \omega)$ is almost-K\"ahler. 
If $J$ is integrable and $d\omega=0$, then $(M, g, \omega)$ is K\"ahler. 
We note that $\nabla\omega=0$ for a K\"ahler metric, where $\nabla$ is the Levi-Civita connection. 
The following was shown in [1]. 
\newtheorem{Lemma}{Lemma}
\begin{Lemma}
Let $(M, g, J)$ be an almost-Hermitian four-manifolds. 
Then given an anti-self-dual 2-form $\phi$ with length $\sqrt{2}$, 
there exists an orthonormal basis $\{X, JX, Y, JY\}$ such that $\phi=X\wedge JX-Y\wedge JY$.
\end{Lemma}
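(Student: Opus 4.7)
The plan is to work pointwise at $p\in M$ and translate the problem into linear algebra on $T_pM$. Given $\phi\in\Lambda^{-}_p$, let $A\colon T_pM\to T_pM$ be the skew-symmetric endomorphism determined by $\phi(V,W)=g(AV,W)$; under this correspondence the inner product on 2-forms reads $\langle\phi_{1},\phi_{2}\rangle=-\tfrac{1}{2}\operatorname{tr}(A_{1}A_{2})$, and the K\"ahler form $\omega$ corresponds to the endomorphism $J$ itself.

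The first step is to translate the two hypotheses into properties of $A$. On any almost-Hermitian four-manifold one has $\Lambda^{+}=\mathbb{R}\omega\oplus\operatorname{Re}\Lambda^{2,0}$ and $\Lambda^{-}=\Lambda^{1,1}_{0}$, so being anti-self-dual means $\phi$ is a primitive $(1,1)$-form. In endomorphism language this gives $AJ=JA$ (type $(1,1)$) together with $\operatorname{tr}(AJ)=0$ (primitivity, since $\omega\perp\phi$). Viewing $(T_pM,g,J)$ as a Hermitian $\mathbb{C}^{2}$, $A$ is thus a trace-free skew-Hermitian endomorphism, hence lies in $\mathfrak{su}(2)$. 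Every $A\in\mathfrak{su}(2)$ satisfies $A^{2}=-c\operatorname{Id}$ for some $c\geq 0$, and combining $|\phi|^{2}=-\tfrac{1}{2}\operatorname{tr}(A^{2})=2c$ with the hypothesis $|\phi|^{2}=2$ forces $A^{2}=-\operatorname{Id}$.

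Next I would diagonalize the involution $S:=AJ=JA$. One checks that $S^{2}=A^{2}J^{2}=\operatorname{Id}$, that $S^{\top}=(-J)(-A)=S$ is self-adjoint, and that $\operatorname{tr}S=-2\langle\phi,\omega\rangle=0$. Hence $T_pM$ splits orthogonally as $T_pM=V_{+}\oplus V_{-}$ into two-dimensional $\pm 1$-eigenspaces of $S$, and each $V_{\pm}$ is $J$-invariant because $S$ commutes with $J$. Choose unit vectors $X\in V_{-}$ and $Y\in V_{+}$, so that $\{X,JX,Y,JY\}$ is a $J$-adapted orthonormal basis of $T_pM$. On $V_{-}$ the relation $JA=-\operatorname{Id}$ yields $A=J$, while on $V_{+}$ we get $A=-J$. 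A direct computation then gives $\phi(X,JX)=g(JX,JX)=1$ and $\phi(Y,JY)=g(-JY,JY)=-1$, while $\phi(U,V)=0$ for every $U\in V_{-}$, $V\in V_{+}$ (since $A$ preserves the splitting and $V_{+}\perp V_{-}$). Expanding $\phi$ in this basis produces $\phi=X^{\flat}\wedge(JX)^{\flat}-Y^{\flat}\wedge(JY)^{\flat}$.

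The main obstacle is the identity $A^{2}=-\operatorname{Id}$. The length condition alone only gives $\operatorname{tr}(A^{2})=-4$; it is the combination with anti-self-duality, via the identification of $\Lambda^{-}_{p}$ with $\mathfrak{su}(2)$, that promotes $A^{2}$ from merely having fixed trace to being a genuine scalar multiple of the identity. Once this is secured, the simultaneous diagonalization of $A$ and $J$ through the operator $S$ produces the desired frame in a single stroke.
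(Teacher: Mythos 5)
Your proof is correct, but it takes a genuinely different route from the paper's. The paper defines the almost-complex structure $I$ associated to $\phi$, uses the orthogonality $\left<\phi,\omega\right>=0$ together with an intermediate-value argument to produce a vector $V$ with $\left<IV,JV\right>=0$, deduces $IJ=JI$ from an orientation comparison, and then builds the frame from $E_{1}=V-IJV$ and $E_{3}=V+IJV$ --- which, unwound, are exactly eigenvectors of the involution $IJ$. You reach the same involution $S=AJ$ directly: you feed in the standard $U(2)$-decomposition $\Lambda^{+}=\mathbb{R}\omega\oplus\mathrm{Re}\,\Lambda^{2,0}$, $\Lambda^{-}=\Lambda^{1,1}_{0}$ to get $AJ=JA$ and $\operatorname{tr}(AJ)=0$ at the outset, identify $\Lambda^{-}_{p}$ with $\mathfrak{su}(2)$ to obtain $A^{2}=-\operatorname{Id}$ from the length normalization, and then diagonalize the self-adjoint, trace-free involution $S$. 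What your approach buys is a cleaner, purely algebraic argument that avoids the paper's somewhat delicate continuity and orientation steps (the paper's claim that $\left<I(te_{1}+(1-t)e_{2}),J(te_{1}+(1-t)e_{2})\right>$ vanishes for some $t$, and the subsequent deduction $IJ(V)=JI(V)$, are the least transparent parts of its proof). The trade-off is that you assume the type decomposition of $\Lambda^{\pm}$ as an external input; this is standard and not circular, but it is close in content to the lemma itself (both amount to the $U(2)$-reduction of $\Lambda^{2}$), whereas the paper derives everything from the bare facts that $\phi\perp\omega$ and that unit-length self-dual and anti-self-dual forms induce almost-complex structures of opposite orientation. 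All of your individual computations ($\langle\phi_{1},\phi_{2}\rangle=-\tfrac{1}{2}\operatorname{tr}(A_{1}A_{2})$, $A^{2}=-\operatorname{Id}$, the eigenspace splitting, and the evaluation $\phi(X,JX)=1$, $\phi(Y,JY)=-1$) check out.
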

\begin{proof}
We follow the proof given in [1]. 
Let $\phi$ be an anti-self-dual 2-form. 
Then an almost-complex structure $I$ s defined by 
\[g(IX, Y)=\frac{\sqrt{2}}{||\phi||}\phi(X, Y).\]
Similarly, $J$ correspond to a self-dual 2-form $\psi$. 
Since $\left<\phi, \psi\right>=0$, we have $\left<I, J\right>=0$.
Then we have 
\[\Sigma_{i=1}^{4}\left<I(e_{i}), J(e_{i})\right>=0.\]
If $\left<I(e_{i}), J(e_{i})\right>\neq 0$ for all $i$, then we can suppose
$\left<I(e_{1}), J(e_{1})\right>>0$ and $\left<I(e_{2}), J(e_{2})\right><0$.
Then there exists $t\in(0, 1)$ such that $\left<I(te_{1}+(1-t)e_{2}), J(te_{1}+(1-t)(e_{2})\right>=0$
Thus, there exists $V\neq 0$ such that $\left<I(V), J(V)\right>=0$. 
We note that from this, we get $\left<V, IJ(V)\right>=0$.
We consider $\{V, I(V), J(V), IJ(V)\}$, which is negatively oriented and $\{V, J(V), I(V), JI(V)\}$, which is positively oriented. 
Then we get $IJ(V)=JI(V)$. 
Then we have
\[J(V-IJV)=JV+IV=I(V-IJV).\]
Since $V\perp IJV$, we get $0\neq E_{1}:=V-IJV$ such that $JE_{1}=IE_{1}$. 
We also get 
\[J(V+IJV)=JV-IV=I(-V-IJV).\]
Thus, we have $0\neq E_{3}:=V+IJV$ such that $JE_{3}=-IE_{3}$. 
Since
$\left<V-IJV, V+IJV\right>=0$, we have $E_{1}\perp E_{3}$. 
Moreover, we have $\left<JE_{1}, E_{3}\right>=\left<JV-JIJV, V+IJV\right>=0$. 
We normalize $E_{1}$ and $E_{3}$ and we use the same notation for the dual 1-forms. 
Then, we get 
\[\phi=E_{1}\wedge IE_{1}+E_{3}\wedge IE_{3}=E_{1}\wedge JE_{1}-E_{3}\wedge JE_{3}.\]
\end{proof}

\begin{Lemma}
Let $(M, g, \omega, J)$ be a K\"ahler surface and $\lambda_{i}$ be the eigenvalues of $W^{-}$ such that $\lambda_{1}\leq \lambda_{2}\leq \lambda_{3}$. 
Let $B_{max}$ be the maximum of the orthogonal holomorphic bisectional curvature at each point
and $B_{min}$ be the minimum of the orthogonal holomorphic bisectional curvature at each point. 
Then we have
\[\frac{s}{6}-\lambda_{3}=2B_{min}, \hspace{10pt} \frac{s}{6}-\lambda_{1}=2B_{max}, \]
where $s$ is the scalar curvature.

\end{Lemma}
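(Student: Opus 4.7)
The plan is to use Lemma 1 to reinterpret each orthogonal holomorphic bisectional curvature as a Rayleigh quotient of $W^-$. Given orthogonal holomorphic planes $P=\{X,JX\}$ and $P^\perp=\{Y,JY\}$, I set $\phi := X\wedge JX - Y\wedge JY$. A direct computation using that the Kähler form $\omega$ is self-dual shows $\phi\wedge\phi = -2\,dvol$, so $\phi$ is anti-self-dual of length $\sqrt{2}$. Conversely, Lemma 1 asserts that every anti-self-dual 2-form of length $\sqrt{2}$ arises in this way. Thus, as $(P,P^\perp)$ ranges over all orthogonal holomorphic pairs at a point, $\phi$ sweeps out all anti-self-dual 2-forms of length $\sqrt{2}$ there.

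Next I would evaluate $\langle \mathfrak{R}\phi,\phi\rangle$ in two ways. Using the block decomposition of $\mathfrak{R}$ and $\phi\in\Lambda^-$, the $ric_0$ term contributes nothing to the diagonal pairing, yielding $\langle\mathfrak{R}\phi,\phi\rangle = \langle W^-\phi,\phi\rangle + \tfrac{s}{6}$. Expanding $\phi$ algebraically on the other hand gives $\langle\mathfrak{R}\phi,\phi\rangle = H(X,JX) + H(Y,JY) - 2B(X,Y)$.

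The key technical input is the Kähler identity $H(X,JX)+H(Y,JY)+2B(X,Y) = s/2$. I would derive it by computing $\mathrm{Ric}(X,X)$ in the $J$-adapted frame $\{X,JX,Y,JY\}$: the first Bianchi identity together with the Kähler symmetry $R(J\cdot,J\cdot,\cdot,\cdot) = R(\cdot,\cdot,\cdot,\cdot)$ yields the sectional-curvature identity $K(X,Y)+K(X,JY) = B(X,Y)$, hence $\mathrm{Ric}(X,X) = H(X,JX) + B(X,Y)$ and similarly for $Y$; the relation $s = 2\mathrm{Ric}(X,X) + 2\mathrm{Ric}(Y,Y)$ (which uses $J$-invariance of Ricci) then gives the claim. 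Subtracting the two expressions for $\langle\mathfrak{R}\phi,\phi\rangle$, I obtain $2B(X,Y) = \tfrac{s}{6} - \tfrac{1}{2}\langle W^-\phi,\phi\rangle$. Since the Rayleigh quotient $\tfrac12\langle W^-\phi,\phi\rangle/|\phi|^2\cdot 2$ ranges precisely over $[\lambda_1,\lambda_3]$ and attains both endpoints on eigenvectors, $B_{\max}$ is realized when the quotient equals $\lambda_1$ and $B_{\min}$ when it equals $\lambda_3$, giving $2B_{\max} = s/6-\lambda_1$ and $2B_{\min}=s/6-\lambda_3$. The main obstacle I anticipate is careful bookkeeping of sign and orientation conventions when identifying $\phi$ as anti-self-dual and when applying the Kähler curvature identities, since the complex structure enters both in the parameterization of $\Lambda^-$ via Lemma 1 and in the derivation of the scalar-curvature identity.
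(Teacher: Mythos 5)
Your proposal is correct and follows essentially the same route as the paper: both use Lemma 1 to realize every anti-self-dual $2$-form of length $\sqrt{2}$ as $X\wedge JX-Y\wedge JY$, establish the identity $\bigl(\tfrac{s}{6}-W^{-}\bigr)(\phi,\phi)=4B(X,Y)$, and then extract $B_{\min}$ and $B_{\max}$ from the variational characterization of the extreme eigenvalues $\lambda_{3}$ and $\lambda_{1}$. The only cosmetic difference is that you derive the key identity by pairing the block form of $\mathfrak{R}$ with the Ricci-trace relation $H(X,JX)+H(Y,JY)+2B(X,Y)=s/2$, whereas the paper reduces the component sum $R_{1313}+R_{1414}+R_{2323}+R_{2424}+2R_{1234}$ directly via the first Bianchi and K\"ahler identities; these are the same computation organized differently.
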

\begin{proof}
By Lemma 1, given an anti-self-dual 2-form $\phi$ with length $\sqrt{2}$,  there exists an orthonormal basis $\{E_{1}, JE_{1}, E_{3}, JE_{3}\}$ such that 
\[\phi=E_{1}\wedge JE_{1}-E_{3}\wedge JE_{3}.\]
Moreover, 
\[\left(\frac{s}{6}-W^{-}\right)(\phi, \phi)=R_{1313}+R_{1414}+R_{2323}+R_{2424}+2R_{1234},\]
where $R_{ijkl}=R(E_{i}\wedge E_{j}, E_{k}\wedge E_{l})$
and $E_{2}=JE_{1}, E_{4}=JE_{3}$.
Using curvature identities of K\"ahler manifolds and the First Bianchi identity, we have 
\[\left(\frac{s}{6}-W^{-}\right)(\phi, \phi)=4R_{1234}=4R(E_{1}\wedge JE_{1}, E_{3}\wedge JE_{3})=4B(E_{1}, E_{3}).\]
Let $\phi$ is an eigenvector of $W^{-}$ with eigenvalue $\lambda_{3}$. Then it follows that 
\[\frac{s}{6}-\lambda_{3}\geq 2B_{min}.\]
On the other hand, if $E_{1}, E_{3}$ are tangent vectors such that $R(E_{1}\wedge JE_{1}, E_{3}\wedge JE_{3})=B_{min}$, 
then 
\[\frac{s}{6}-\lambda_{3}\leq\frac{1}{2}\left(\frac{s}{6}-W^{-}\right)(\psi, \psi)=2B_{min},\]
where $\psi=E_{1}\wedge JE_{1}-E_{3}\wedge JE_{3}$.
Thus, it follows that 
\[\frac{s}{6}-\lambda_{3}= 2B_{min}.\]
Similarly, we have $\frac{s}{6}-\lambda_{1}=2B_{max}$.
\end{proof}

\begin{Lemma}
Let $(M, g, \omega, J)$ be a K\"ahler-Einstein surface with the scalar curvature $s$
and $\lambda_{i}$ be the eigenvalues of $W^{-}$ such that $\lambda_{1}\leq \lambda_{2}\leq \lambda_{3}$.
Then we have 
\[H_{max}=\frac{s}{6}+\frac{\lambda_{3}}{2}, \hspace{10pt} H_{min}=\frac{s}{6}+\frac{\lambda_{1}}{2}, \hspace{10pt} H_{av}=\frac{s}{6},\]
where $H_{max}(H_{min})$ is the maximum(minimum) of the holomorphic sectional curvature at each point respectively
and $H_{av}$ is the average of the holomorphic sectional curvature at each point. 
\end{Lemma}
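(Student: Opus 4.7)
The plan is to turn the bivector identity of Lemma 2 into a statement about holomorphic sectional curvature. The bridge is an identity valid on any K\"ahler surface that expresses the orthogonal holomorphic bisectional curvature in terms of the holomorphic sectional curvature and the Ricci curvature; the Einstein hypothesis will then eliminate the Ricci term.

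First I would establish the pointwise identity
\[\mathrm{Ric}(X,X)=H(X,JX)+B(X,Y)\]
on any K\"ahler surface, for every orthonormal frame $\{X,JX,Y,JY\}$. Expanding $\mathrm{Ric}(X,X)=R(X\wedge JX,X\wedge JX)+R(X\wedge Y,X\wedge Y)+R(X\wedge JY,X\wedge JY)$ and using the K\"ahler identity $R(X\wedge Y,X\wedge Y)=R(X\wedge Y,JX\wedge JY)$ (a consequence of $[R(X,Y),J]=0$) together with the first Bianchi identity, the two sectional-curvature terms collapse to the single bisectional term $B(X,Y)=R(X\wedge JX,Y\wedge JY)$.

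On a K\"ahler--Einstein surface in complex dimension two, $\mathrm{Ric}=(s/4)g$, and therefore
\[B(X,Y)=\frac{s}{4}-H(X,JX).\]
The crucial observation is that the right-hand side is independent of $Y$. Hence at each point the orthogonal holomorphic bisectional curvature is extremized precisely where the holomorphic sectional curvature is extremized, and
\[B_{max}=\frac{s}{4}-H_{min},\qquad B_{min}=\frac{s}{4}-H_{max}.\]
Substituting these into the equalities $\frac{s}{6}-\lambda_{3}=2B_{min}$ and $\frac{s}{6}-\lambda_{1}=2B_{max}$ from Lemma 2 and solving linearly for $H_{max}$ and $H_{min}$ produces the first two stated formulas. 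The identity $H_{av}=\frac{s}{6}$ is Berger's theorem, already recalled in the introduction.

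The main obstacle is the bisectional--Ricci identity $\mathrm{Ric}(X,X)=H(X,JX)+B(X,Y)$; the manipulation of the K\"ahler curvature symmetries and Bianchi identity has to be done carefully. Once it is in place, the Einstein hypothesis together with Lemma 2 combine in a purely algebraic way to give the claimed formulas.
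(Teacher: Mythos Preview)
Your proposal is correct and follows essentially the same route as the paper: establish $\mathrm{Ric}(X,X)=H(X,JX)+B(X,Y)$ via the K\"ahler curvature symmetries and the first Bianchi identity, invoke the Einstein condition to get $B(X,Y)=\tfrac{s}{4}-H(X,JX)$, match extrema, and then substitute into Lemma~2. The paper deduces $B_{\min}=\tfrac{s}{4}-H_{\max}$ by a pair of opposite inequalities rather than your direct observation that the right-hand side is independent of $Y$, but this is only a cosmetic difference.
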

\begin{proof}
Let $\{X, JX, Y, JY\}$ is an orthonormal basis.  
The First Bianchi identity and curvature identities for K\"ahler manifolds give 
\[B(X, Y):=R(X\wedge JX, Y\wedge JY)=R(X\wedge Y, X\wedge Y)+R(X\wedge JY, X\wedge JY).\]
On the other hand, 
\[Ric(X, X)=R(X\wedge JX, X\wedge JX)+R(X\wedge Y, X\wedge Y)+R(X\wedge JY, X\wedge JY).\]
Since $(M, g)$ is Einstein, we have 
\[Ric(X, X)=\frac{s}{4}.\]
It follows that 
\[\frac{s}{4}=H(X, JX)+B(X, Y)\]
and therefore, for $X$ such that $H_{max}=H(X, JX)$, we have 
\[\frac{s}{4}-H_{max}=B(X, Y)\geq B_{min}.\]
On the other hand, for $X, Y$ such that $B_{min}=B(X, Y)$, we have 
\[\frac{s}{4}-B_{min}=H(X, JX)\leq H_{max}.\]
Thus, we get 
\[\frac{s}{4}-B_{min}=H_{max}.\]
Then by Lemma 2, it follows that 
\[\frac{s}{6}-\lambda_{3}=2\left(\frac{s}{4}-H_{max}\right).\]
Thus, 
\[H_{max}=\frac{s}{6}+\frac{\lambda_{3}}{2}.\]
Similarly, we have 
\[H_{min}=\frac{s}{6}+\frac{\lambda_{1}}{2}.\]
\end{proof}

From the intersection pairing of the de Rham cohomology $H^{2}(M, \mathbb{R})$ on a smooth, compact four-manifold $M$, 
the diagonal matrix is given. 
We denote by $b_{\pm}$ the number of $(\pm 1)$'s of this matrix. 
Let $b_{i}$ be the i-th betti number. 
Then we have $b_{2}=b_{+}+b_{-}$ and $\tau(M)=b_{+}-b_{-}$, 
where $\tau(M)$ is the signature. 
The Euler characteristic $\chi(M)$ is given by $\chi(M)=2-b_{1}+b_{+}+b_{-}-b_{3}$.  

\begin{Lemma}
Let $(M, g, \omega)$ be a compact K\"ahler-Einstein surface with $b_{-}=0$. 
Then $(M, g, \omega)$ has constant holomorphic sectional curvature $\frac{s}{6}$, where $s$ is the scalar curvature. 
\end{Lemma}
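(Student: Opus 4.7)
The plan is to reduce the statement to the claim $W^-\equiv 0$, after which Lemma 3 immediately gives $H_{\max}=H_{\min}=s/6$ (since then all eigenvalues of $W^-$ vanish) and the conclusion follows.

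To prove $W^-\equiv 0$, I would combine the Chern--Gauss--Bonnet and Hirzebruch signature formulas
\[
\chi(M)=\frac{1}{8\pi^2}\int_M\Bigl(|W^+|^2+|W^-|^2+\tfrac{s^2}{24}-\tfrac{|ric_0|^2}{2}\Bigr)dvol,
\qquad
\tau(M)=\frac{1}{12\pi^2}\int_M\bigl(|W^+|^2-|W^-|^2\bigr)dvol,
\]
and specialize them to the K\"ahler--Einstein setting. There $ric_0=0$, and the standard K\"ahler identity $|W^+|^2=s^2/24$ (coming from the fact that $W^+$ has eigenvalues $s/6,-s/12,-s/12$) applies. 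A short algebraic cancellation then yields the identity
\[
\chi(M)-3\tau(M)\;=\;\frac{3}{8\pi^2}\int_M|W^-|^2\,dvol,
\]
which in particular shows $\chi-3\tau\ge 0$, with equality if and only if $W^-\equiv 0$.

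Next I would read off $\chi-3\tau$ purely topologically. With $b_-=0$ one has $\tau=b_+$ and $\chi=2-2b_1+b_+$, hence
\[
\chi(M)-3\tau(M)\;=\;2-2b_1-2b_+\;\le\;0,
\]
because the K\"ahler class $[\omega]\in H^2(M,\mathbb{R})$ has positive self-intersection and therefore contributes to $H^2_+(M)$, forcing $b_+\ge 1$. Combined with the integral identity above, this forces $\int_M|W^-|^2=0$, so $W^-\equiv 0$ pointwise, and Lemma~3 completes the proof.

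I do not anticipate a genuine obstacle: every ingredient (the two Chern--type integral formulas, the K\"ahler identity for $|W^+|^2$, and the positivity of $[\omega]^2$) is classical. The one point that needs a little care is getting the coefficient $3/(8\pi^2)$ right in the identity relating $\chi-3\tau$ to $\int|W^-|^2$, which amounts to a straightforward combination of the two formulas after substituting $ric_0=0$ and $|W^+|^2=s^2/24$.
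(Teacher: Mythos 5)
Your proposal is correct and follows essentially the same route as the paper: both derive $\chi-3\tau=\frac{3}{8\pi^2}\int_M|W^-|^2\,d\mu$ from the Gauss--Bonnet and signature formulas using $ric_0=0$ and $|W^+|^2=s^2/24$, and both pair this with the topological count $\chi-3\tau=2-2b_1-2b_+$ together with $b_+\ge 1$ to force $W^-\equiv 0$, after which Lemma 3 gives constant holomorphic sectional curvature $s/6$.
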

\begin{proof}
We have 
\[(\chi-3\tau)(M)=\frac{1}{8\pi^{2}}\int_{M}\left(\frac{s^{2}}{24}-|W^{+}|^{2}+3|W^{-}|^{2}-\frac{|\mathring{ric}|^{2}}{2}\right)d\mu.\]
Since $\int_{M}\frac{s^{2}}{24}d\mu=\int_{M}|W^{+}|^{2}d\mu$ for a K\"ahler surface, we have 
$\chi\geq3\tau$ for a compact K\"ahler-Einstein surface.
From
\[(\chi-3\tau)(M)=2+b_{+}-b_{1}-b_{3}-3b_{+}\geq 0,\]
we have $b_{+}=1$ and $\chi(M)=3\tau(M)$. 
Therefore, $(M, g, \omega)$ is self-dual K\"ahler-Einstein. By Lemma 3, $(M, g, \omega)$ has constant holomorphic sectional curvature. 
\end{proof}

Gray showed that a compact K\"ahler manifold with nonnegative sectional curvature is locally symmetric 
if the scalar curvature is constant [10]. 
Below we use the orthogonal holomorphic bisectional curvature instead of the sectional curvature incase of K\"ahler surfaces. 

\newtheorem{Proposition}{Proposition}
\begin{Proposition}
Let $(M, g, \omega)$ be a compact K\"ahler surface with nonnegative orthogonal holomorphic bisectional curvature.
Suppose the scalar curvature is positive constant.
Then $(M, g, \omega)$ is locally symmetric. 
\end{Proposition}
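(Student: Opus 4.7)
I adapt Gray's strategy [10]: use the constant scalar curvature hypothesis to manufacture a harmonic anti-self-dual 2-form, apply a Bochner--Weitzenböck argument to conclude that the Ricci tensor is parallel, and then run a separate argument in the K\"ahler--Einstein case to show $\nabla W^- = 0$. Since $s$ is constant and $d\rho=0$ on any K\"ahler metric, the primitive Ricci form $\rho_0 := \rho - \tfrac{s}{4}\omega$ is closed and coclosed (as $\delta\rho_0 = -\tfrac{1}{2}J\,ds = 0$); being a primitive $(1,1)$-form on a K\"ahler surface, it also lies in $\Lambda^-$. Thus $\rho_0$ is a harmonic anti-self-dual 2-form, and it vanishes identically exactly when $g$ is K\"ahler--Einstein.

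The Bochner--Weitzenböck identity for a harmonic anti-self-dual 2-form $\phi$ on a compact 4-manifold yields, after integration by parts,
\[\int_M |\nabla \phi|^2\,d\mu \;=\; \int_M \Bigl(2\langle W^-(\phi),\phi\rangle - \tfrac{s}{3}|\phi|^2 + (\text{Ric}_0\text{-correction})\Bigr)\,d\mu.\]
Specializing to $\phi = \rho_0$ and using the K\"ahler relation between $\rho_0$ and the trace-free Ricci tensor, the Ricci correction is absorbed, and the surviving pointwise bound on the integrand is $\bigl(2\lambda_3 - \tfrac{s}{3}\bigr)|\rho_0|^2$. By Lemma 2, the hypothesis $B_{min}\geq 0$ is exactly $\lambda_3 \leq s/6$, which makes this coefficient nonpositive. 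Hence $\nabla \rho_0 \equiv 0$, and since $s$ is constant, $\nabla \text{Ric}\equiv 0$.

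Parallel Ricci on a compact K\"ahler surface splits its universal cover via de Rham: either $\widetilde M$ decomposes as a Riemannian product $\Sigma_1\times \Sigma_2$ of Riemann surfaces, in which case the two Gaussian curvatures are constant (they are the parallel Ricci eigenvalues) and the product is locally symmetric; or $\widetilde M$ is irreducible and $M$ is K\"ahler--Einstein with positive scalar curvature. In the Einstein case one invokes the standard Weitzenböck identity $\tfrac{1}{2}\Delta|W^-|^2 = |\nabla W^-|^2 + \tfrac{s}{2}|W^-|^2 - 6\det W^-$ and uses the eigenvalue bound $\lambda_3 \leq s/6$ together with $\lambda_1+\lambda_2+\lambda_3 = 0$ to show that the cubic curvature term has the sign needed to force $\nabla W^- = 0$. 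Combined with the already-parallel Ricci, this yields $\nabla R = 0$, so $M$ is locally symmetric.

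The main obstacle is the algebraic analysis in the Einstein case: one must control the sign of $\tfrac{s}{2}|W^-|^2 - 6\det W^-$ using only $\lambda_3 \leq s/6$ and $\lambda_1+\lambda_2+\lambda_3 = 0$, which requires a careful case analysis on the signs and sizes of the eigenvalues. A secondary, more routine obstacle is verifying that the Ricci-type correction in the Weitzenböck formula for $\rho_0$ does not spoil the sign of the integrand coming from the $W^-$ eigenvalue bound.
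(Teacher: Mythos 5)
Your proof is correct, but it is organized quite differently from the paper's. The paper splits on topology: if $b_-\neq 0$ it feeds an arbitrary harmonic anti-self-dual form into the Weitzenb\"ock formula $\Delta\alpha=\nabla^*\nabla\alpha-2W^-(\alpha)+\frac{s}{3}\alpha$ (whose curvature term is nonnegative because $B_{min}\geq 0$ gives $2\lambda_3\leq \frac{s}{3}$ via Lemma 2) to produce a parallel anti-self-dual form, deduces that the orientation-reversed manifold is K\"ahler and hence that $g$ is locally a product, and only then gets $\nabla\rho=0$ from the same mechanism applied to $\rho-\frac{s}{4}\omega$; if $b_-=0$ it argues topologically ($h^{2,0}=0$ by positivity of $s$, so $b_+=1$ and $b_2=1$, forcing $\rho$ proportional to $\omega$, i.e.\ K\"ahler--Einstein) and then invokes the Gauss--Bonnet/signature argument of Lemma 4. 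You instead apply the Weitzenb\"ock formula once, directly to $\rho_0=\rho-\frac{s}{4}\omega$, obtaining $\nabla\mathrm{Ric}=0$ with no topological input, and then split on Einstein versus non-Einstein; this buys a cleaner, topology-free treatment of the non-Einstein case at the cost of a separate curvature estimate in the Einstein case. Both of your flagged obstacles resolve favorably. First, there is no Ricci correction to absorb: for anti-self-dual $2$-forms in dimension four the Weitzenb\"ock curvature term is exactly $-2W^-+\frac{s}{3}$ (the very formula the paper uses), so the sign of the integrand follows immediately from $\lambda_3\leq\frac{s}{6}$. Second, the algebraic step in the Einstein case does go through: since $\lambda_1+\lambda_2+\lambda_3=0$ one has $\lambda_1\lambda_2\leq\frac{1}{4}\lambda_3^2$ and $|W^-|^2\geq\frac{3}{2}\lambda_3^2$, hence when $\det W^-\geq 0$,
\[36\det W^-\leq 9\lambda_3^3\leq 9\cdot\frac{s}{6}\cdot\lambda_3^2\leq s\,|W^-|^2,\]
while the inequality is trivial when $\det W^-<0$ because $s>0$; integrating the identity $\Delta|W^-|^2=-2|\nabla W^-|^2+36\det W^- -s|W^-|^2$ (valid since Einstein implies $\delta W^-=0$) then forces $\nabla W^-=0$, and together with $\nabla\mathrm{Ric}=0$ and the fact that $W^+$ of a K\"ahler surface is built from $\omega$ and the (constant) scalar curvature, this gives $\nabla R=0$. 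Alternatively you could close the Einstein case by quoting Lemma 4 or Theorem 7 as the paper does, but your route is self-contained.
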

\begin{proof}
By Lemma 2, we get 
\[\frac{s}{6}-\lambda_{3}=2B_{min}\geq 0.\]
If $b_{-}\neq 0$, then the following Weitzenb\"ock formula for an anti-self-dual 2-form implies
there exists a parallel anti-self-dual 2-form
\[\Delta\alpha=\nabla^{*}\nabla\alpha-2W^{-}(\alpha)+\frac{s}{3}\alpha.\]
Thus, $(\overline{M}, g)$ is K\"ahler. 
Since $(M, g)$ is locally a product metric and the scalar curvature is constant, $\nabla W^{\pm}=0$. 
If  $(M, g, \omega)$ is not Einstein,
then $\rho-\frac{s}{4}\omega$ is a non-trivial parallel 2-form
since $\rho-\frac{s}{4}\omega$ is an anti-self-dual harmonic 2-form. 
Since $\nabla\omega=0$, $\nabla\rho=0$. 
Thus, $(M, g, \omega)$ is locally symmetric. 

Suppose $b_{-}=0$. Following [15],  geometric genus $h^{2, 0}$ of a compact K\"ahler surface of positive scalar curvature vanishes [20]. 
Since $b_{+}=1+2h^{2, 0}$ [15] for any compact K\"ahler surface, we get $b_{+}=1$. 
Let $\rho$ be the Ricci-form. 
Since $\left<\rho, \omega\right>\omega=\frac{s}{4}\omega$, 
$d\rho^{+}=0$ and therefore, $d*\rho=0$. 
Thus, $\rho$ is a harmonic 2-form. 
Since $b_{2}=1$, $(M, g, \omega)$ is K\"aher-Einstein. 
By Lemma 3, $(M, g, \omega)$ has constant holomorphic sectional curvature. 
\end{proof}

\newtheorem{Theorem}{Theorem}
\begin{Theorem}
(Berger [4])
Let $(M, g, \omega)$ be a K\"ahler manifold of real dimension $n=2N$ and $H$ be the holomorphic sectional curvature. 
Then for all $p\in M$, we have 
\[\int_{S_{p}}Hd\sigma=\frac{s}{N(N+1)}vol(\mathbb{S}^{n-1}).\]
\end{Theorem}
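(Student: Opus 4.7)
The plan is to reduce the sphere integral to a pointwise algebraic contraction. Fix $p\in M$ and an orthonormal basis $\{f_a\}_{a=1}^{n}$ of $T_pM$ with $n=2N$, and write a unit tangent vector as $X=\sum_a x_a f_a$. Since $J$ is pointwise an isometry,
\[H(X)=R(X,JX,JX,X)=\sum_{a,b,c,d} x_ax_bx_cx_d\,R(f_a,Jf_b,Jf_c,f_d),\]
which is a homogeneous polynomial of degree four in the $x_a$.

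Next I would invoke the standard monomial formula on the unit sphere,
\[\int_{\mathbb{S}^{n-1}} x_ax_bx_cx_d\,d\sigma = \frac{vol(\mathbb{S}^{n-1})}{n(n+2)}\bigl(\delta_{ab}\delta_{cd}+\delta_{ac}\delta_{bd}+\delta_{ad}\delta_{bc}\bigr),\]
which collapses $\int_{S_p}H\,d\sigma$ to three pairwise index contractions of $R(f_a,Jf_b,Jf_c,f_d)$: namely $\sum_{a,c}R(f_a,Jf_a,Jf_c,f_c)$, $\sum_{a,b}R(f_a,Jf_b,Jf_a,f_b)$, and $\sum_{a,b}R(f_a,Jf_b,Jf_b,f_a)$. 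The key step is then to simplify each of these traces using the symmetries of $R$ (skew-symmetry in each pair, pair-swap, first Bianchi) together with the K\"ahler identity $R(JU,JV,Z,W)=R(U,V,Z,W)$, which follows from $\nabla J=0$ and $J$ being an isometry. Each sum should reduce to a rational multiple of the scalar curvature $s=\sum_{a,b}R(f_a,f_b,f_b,f_a)$, after which combining with $n(n+2)=4N(N+1)$ yields the stated constant $1/N(N+1)$.

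The main obstacle is the algebraic bookkeeping: identifying each of the three contractions as a specific multiple of $s$ while tracking the signs introduced by $J^2=-\mathrm{Id}$ and by the antisymmetries of $R$. A cleaner alternative route is to pass to a unitary frame and decompose $X$ into its $(1,0)$ and $(0,1)$ parts: the K\"ahler vanishing $R(Z,W,\cdot,\cdot)=0$ for $Z,W$ both of type $(1,0)$ eliminates most cross-terms at the outset, after which the surviving contraction is immediately seen to be proportional to $s$ and only the overall constant remains to be tallied.
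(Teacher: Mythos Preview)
The paper does not actually prove this theorem: it is quoted as a result of Berger~[4] and immediately specialized to $N=2$ to get $H_{av}=\frac{s}{6}$, with no argument given. So there is nothing in the paper to compare your proof against.

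That said, your outline is the standard proof of Berger's formula and is correct in its structure. Writing $H(X)$ as a quartic in the coordinates of $X$ and integrating monomials over $\mathbb{S}^{n-1}$ via
\[
\int_{\mathbb{S}^{n-1}} x_ax_bx_cx_d\,d\sigma=\frac{vol(\mathbb{S}^{n-1})}{n(n+2)}\bigl(\delta_{ab}\delta_{cd}+\delta_{ac}\delta_{bd}+\delta_{ad}\delta_{bc}\bigr)
\]
is exactly how Berger's computation proceeds. The three resulting traces do each collapse to a multiple of $s$: the contraction $\sum_{a,b}R(f_a,Jf_b,Jf_b,f_a)$ equals $s$ immediately since $\{Jf_b\}$ is again orthonormal, while the other two require the K\"ahler identity $R(JU,JV,\cdot,\cdot)=R(U,V,\cdot,\cdot)$ and the first Bianchi identity, and one finds that the three traces sum to $4s$, matching $n(n+2)=4N(N+1)$. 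Your remark that a unitary frame streamlines the bookkeeping is also on point: in type decomposition only the $R_{i\bar j k\bar l}$ components survive, and the contraction to $s$ is then a one-line computation. There is no gap in your plan; only the explicit trace identities remain to be written out.
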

From this, we get
\[H_{av}(p)=\frac{s}{6}\]
for a K\"ahler surface. 

\begin{Theorem}
(Hall and Murphy [13])
Let $(M, g, \omega, J)$ be an almost-Hermitian manifold of real dimension $n=2N$.
Then for all $p\in M$, 
\[\int_{S_{p}}Hd\sigma=\frac{s+3s^{*}}{4N(N+1)}vol(\mathbb{S}^{n-1}),\]
where $H$ is the holomorphic sectional curvature and $s^{*}=2R(\omega, \omega)$. 
\end{Theorem}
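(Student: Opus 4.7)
The plan is to observe that $H(X)=R(X,JX,JX,X)$ is a homogeneous polynomial of degree four in the components of $X$, and to apply the standard formula for moments of a quartic polynomial on the unit sphere: for any $(0,4)$-tensor $T_{abcd}$ on $T_{p}M$,
\[\int_{S_{p}}T_{abcd}X^{a}X^{b}X^{c}X^{d}\,d\sigma=\frac{\mathrm{vol}(\mathbb{S}^{n-1})}{n(n+2)}\bigl(g^{ab}g^{cd}+g^{ac}g^{bd}+g^{ad}g^{bc}\bigr)T_{abcd}.\]
Taking $T_{abcd}=R(e_{a},Je_{b},Je_{c},e_{d})$ so that $T_{abcd}X^{a}X^{b}X^{c}X^{d}=H(X)$ reduces the theorem to evaluating the three double traces
\[\Sigma_{1}=\sum_{a,b}R(e_{a},Je_{a},Je_{b},e_{b}),\quad\Sigma_{2}=\sum_{a,b}R(e_{a},Je_{b},Je_{a},e_{b}),\quad\Sigma_{3}=\sum_{a,b}R(e_{a},Je_{b},Je_{b},e_{a}).\]

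I expect the first and third traces to fall out easily. For $\Sigma_{1}$, the identity $\sum_{a}e_{a}\wedge Je_{a}=2\omega$ in $\Lambda^{2}T_{p}M$ lets me recognise the double sum as a bivector pairing of $R$ against $\omega\otimes\omega$; once sign conventions are matched to $s^{*}=2R(\omega,\omega)$, this gives $\Sigma_{1}=2s^{*}$. For $\Sigma_{3}$, the fact that $J$ is an isometry makes $\{Je_{b}\}$ another orthonormal frame, so the inner sum over $b$ is exactly $\mathrm{Ric}(e_{a},e_{a})$ by the basis-free definition of the Ricci tensor, and summing over $a$ yields $\Sigma_{3}=s$.

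The main obstacle is the mixed trace $\Sigma_{2}$, whose indices are interleaved so that neither of the previous reductions applies directly. My plan is to combine a $J$-substitution with the first Bianchi identity. First, substituting $e_{b}\mapsto Je_{b}$ in the summation---legitimate because $\{Je_{b}\}$ is itself orthonormal---and using $J^{2}=-I$ converts $\Sigma_{2}$ into $-T$, where $T=\sum_{a,b}R(e_{a},e_{b},Je_{a},Je_{b})$. Second, applying the first Bianchi identity to the summand of $T$ and summing produces two companion sums: one is again a bivector pairing that evaluates via $\sum e_{a}\wedge Je_{a}=2\omega$ to $\pm 2s^{*}$, while the other, after a pair-swap and use of the antisymmetries of $R$, collapses back to $-\Sigma_{2}$. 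The resulting two-equation linear system in $(T,\Sigma_{2})$ then forces $\Sigma_{2}=s^{*}$.

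Combining the three yields $\Sigma_{1}+\Sigma_{2}+\Sigma_{3}=2s^{*}+s^{*}+s=s+3s^{*}$, and dividing by $n(n+2)=2N\cdot 2(N+1)=4N(N+1)$ produces the stated identity. The delicate bookkeeping I anticipate is matching the sign conventions relating the curvature operator pairing $R(\omega,\omega)$ to the $(0,4)$-tensor appearing in the integrand, together with the Bianchi-plus-substitution loop used to pin down $\Sigma_{2}$.
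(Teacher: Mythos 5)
The paper does not actually prove this statement; it is quoted verbatim from Hall--Murphy [13] and only its four-dimensional consequence $H_{av}=(s+3s^{*})/24$ is used, so there is no in-paper argument to compare against. Judged on its own, your proof is correct and is the standard route: the quartic moment formula on $\mathbb{S}^{n-1}$ reduces the integral to the three double traces, and your evaluations check out. With the conventions $K(P)=g(R(X,Y)Y,X)$ and $\omega=\tfrac12\sum_{a}e_{a}\wedge Je_{a}$ one gets $\Sigma_{1}=R(2\omega,2\omega)=4R(\omega,\omega)=2s^{*}$, $\Sigma_{3}=s$ because $\{Je_{b}\}$ is again orthonormal, and for the mixed trace the substitution $e_{b}\mapsto Je_{b}$ gives $\Sigma_{2}=-T$ with $T=\sum_{a,b}R(e_{a},e_{b},Je_{a},Je_{b})$, after which the first Bianchi identity summed over $a,b$ yields $2T+2s^{*}=0$, hence $\Sigma_{2}=s^{*}$; the total $s+3s^{*}$ over $n(n+2)=4N(N+1)$ is exactly the claimed constant. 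Note that nothing beyond the symmetries of the Riemann tensor and the orthogonality of $J$ is used, so the argument is valid in the full almost-Hermitian generality claimed. A quick sanity check: for a K\"ahler surface $R(\omega,\omega)=s/2$, so $s^{*}=s$ and the formula gives $H_{av}=s/6$, consistent with Berger's theorem as stated in the paper. The one place to be vigilant is the sign convention relating $R(X,Y,Z,W)$ to $g(R(X,Y)W,Z)$ versus $g(R(X,Y)Z,W)$, which you correctly flag; an inconsistent choice would flip the sign of the $2s^{*}$ term coming out of the Bianchi identity.
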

From this, for an almost-Hermitian four-manifold, it follows that 
\[H_{av}(p):=\frac{1}{vol(\mathbb{S}^{3})}\int_{S_{p}}Hd\sigma=\frac{s+3s^{*}}{24}.\]

Let $(M, g, \omega)$ be a K\"ahler-Einstein manifold with complex-dimension $N$.  
Then Berger showed $H\leq\frac{s}{N(N+1)}$ if and only if $H=\frac{s}{N(N+1)}$ [4]. 
In case of complex-dimension 2, we show this using eigenvalues of $W^{-}$. 

\begin{Theorem}
(Berger [4])
Let $(M, g, \omega)$ be a K\"ahler-Einstein surface and let $H$ be the holomorphic sectional curvature. 
Suppose $H\leq\frac{s}{6}$ or $H\geq\frac{s}{6}$. 
Then $(M, g, \omega)$ has constant holomorphic sectional curvature $\frac{s}{6}$,
where $s$ is the scalar curvature. 

\end{Theorem}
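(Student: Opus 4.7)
The plan is to reduce the theorem to an algebraic statement about the eigenvalues $\lambda_{1}\leq\lambda_{2}\leq\lambda_{3}$ of $W^{-}$ by appealing directly to Lemma 3. Since $(M,g,\omega)$ is K\"ahler-Einstein, the Einstein condition together with the contracted second Bianchi identity forces $s$ to be constant, so once I show $H\equiv\frac{s}{6}$ pointwise the global conclusion follows immediately.

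Suppose first that $H\leq\frac{s}{6}$ everywhere. At each point $p$ this gives $H_{\max}(p)\leq\frac{s}{6}$. By Lemma 3, $H_{\max}(p)=\frac{s}{6}+\frac{\lambda_{3}(p)}{2}$, so $\lambda_{3}(p)\leq 0$. Together with $\lambda_{1}(p)\leq\lambda_{2}(p)\leq\lambda_{3}(p)$ this means all three eigenvalues are nonpositive. But $W^{-}$ is trace-free, so $\lambda_{1}(p)+\lambda_{2}(p)+\lambda_{3}(p)=0$, which forces $\lambda_{1}(p)=\lambda_{2}(p)=\lambda_{3}(p)=0$. Plugging back into Lemma 3 yields $H_{\min}(p)=H_{\max}(p)=\frac{s}{6}$, hence $H(X,JX)=\frac{s}{6}$ for every unit tangent vector at $p$.

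The case $H\geq\frac{s}{6}$ is symmetric: one instead extracts $H_{\min}(p)\geq\frac{s}{6}$, which via Lemma 3 gives $\lambda_{1}(p)\geq 0$, and then the trace-free condition again collapses all $\lambda_{i}(p)$ to zero. Either way, Lemma 3 then returns $H_{\max}(p)=H_{\min}(p)=\frac{s}{6}$ at every point, and constancy of $s$ promotes this to constant holomorphic sectional curvature.

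Honestly, there is no real obstacle here: all the analytic work has been done in Lemmas 1--3. The only point that deserves a moment of care is that $H_{\max}(p)$ and $H_{\min}(p)$ really are attained as a pointwise supremum and infimum of $H(X,JX)$ over unit vectors $X\in T_{p}M$, so that $H\leq\frac{s}{6}$ (resp.\ $H\geq\frac{s}{6}$) is genuinely equivalent to $H_{\max}(p)\leq\frac{s}{6}$ (resp.\ $H_{\min}(p)\geq\frac{s}{6}$) for each $p$; this is built into the definitions used in Lemma 3.
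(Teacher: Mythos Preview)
Your proof is correct and follows essentially the same route as the paper: use Lemma 3 to translate the bound on $H$ into $\lambda_{3}\leq 0$ (resp.\ $\lambda_{1}\geq 0$), then combine the ordering with the trace-free condition to force $W^{-}=0$. The paper only writes out the $H\leq \frac{s}{6}$ case and leaves the other implicit, whereas you spell out both and add the harmless remark about constancy of $s$; otherwise the arguments are identical.
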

\begin{proof}
Suppose $H\leq\frac{s}{6}$. 
Then by Lemma 3, we have 
\[H_{max}=\frac{s}{6}+\frac{\lambda_{3}}{2}\leq\frac{s}{6}.\]
From this, we get $\lambda_{3}\leq 0$. Since $\lambda_{1}+\lambda_{2}+\lambda_{3}=0$
and $\lambda_{1}\leq\lambda_{2}\leq\lambda_{3}$, 
we get $\lambda_{1}=\lambda_{2}=\lambda_{3}=0$. 
Thus, $H=\frac{s}{6}$.
\end{proof}

Hall and Murphy showed the following result when $H_{av}=\frac{s}{3}H_{max}$ [13].
 \begin{Theorem}
Let $(M, g, \omega, J)$ be a compact K\"ahler-Einstein surface of nonzero scalar curvature.  
Suppose $H_{av}\geq\frac{2}{3}H_{max}$, where $H_{av}$ is the average of the holomorphic sectional curvature 
and $H_{max}$ is the maximum of the holomorphic sectional curvature. Then
\begin{itemize}
\item $(M, J)$ is biholomorphic to $\mathbb{CP}_{2}$
and $(M, g)$ is isometric to the Fubini-Study metric on $\mathbb{CP}_{2}$ up to rescaling; or
\item $(M, J)$ is biholomorphic to $\mathbb{CP}_{1}\times\mathbb{CP}_{1}$
and $(M, g)$ is isometric to the standard product metric of constant curvature on $\mathbb{CP}_{1}\times\mathbb{CP}_{1}$ up to rescaling. 
\end{itemize}
If $(M, g, \omega)$ has $H_{av}=\frac{2}{3}H_{max}$, then  
$(M, J)$ is biholomorphic to $\mathbb{CP}_{1}\times\mathbb{CP}_{1}$
and $(M, g)$ is isometric to the standard product metric of constant curvature on $\mathbb{CP}_{1}\times\mathbb{CP}_{1}$ up to rescaling. 
\end{Theorem}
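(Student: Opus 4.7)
The plan is to translate the hypothesis $H_{av}\geq\frac{2}{3}H_{max}$ into a pointwise eigenvalue bound on $W^{-}$ and then dichotomize on $b_{-}(M)$. By Lemma 3, $H_{av}=\frac{s}{6}$ and $H_{max}=\frac{s}{6}+\frac{\lambda_{3}}{2}$ with $\lambda_{3}$ the largest eigenvalue of $W^{-}$, so the hypothesis rearranges pointwise to
\[\lambda_{3}\leq\frac{s}{6}.\]
Since $\mathrm{tr}(W^{-})=0$ forces $\lambda_{3}\geq 0$ everywhere, this bound requires $s\geq 0$; combined with $s\neq 0$ the argument reduces to the case $s>0$ (in the case $s<0$ the bound would give $\lambda_{3}<0$, a contradiction).

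If $b_{-}(M)=0$, Lemma 4 gives $(M,g,\omega)$ constant holomorphic sectional curvature $s/6>0$. The universal cover of a compact K\"ahler surface with constant positive holomorphic sectional curvature is $\mathbb{CP}_{2}$ with its Fubini--Study metric, and every biholomorphism of $\mathbb{CP}_{2}$ has a fixed point (Lefschetz number $3$), so no nontrivial free quotient exists. Hence $(M,g,J)$ is biholomorphically isometric to $(\mathbb{CP}_{2},g_{FS},J_{\mathrm{std}})$ up to rescaling, which is the first alternative.

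If instead $b_{-}(M)\geq 1$, pick a nonzero harmonic anti-self-dual $2$-form $\alpha$ and apply the Weitzenb\"ock formula recorded in Proposition 1, $\Delta\alpha=\nabla^{*}\nabla\alpha-2W^{-}(\alpha)+\frac{s}{3}\alpha=0$. Pairing with $\alpha$ and integrating over $M$,
\[\int_{M}|\nabla\alpha|^{2}\,d\mu=\int_{M}\Bigl(2W^{-}(\alpha,\alpha)-\frac{s}{3}|\alpha|^{2}\Bigr)\,d\mu\leq\int_{M}\Bigl(2\lambda_{3}-\frac{s}{3}\Bigr)|\alpha|^{2}\,d\mu\leq 0,\]
using $W^{-}(\alpha,\alpha)\leq\lambda_{3}|\alpha|^{2}$ and $\lambda_{3}\leq s/6$. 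Therefore $\nabla\alpha\equiv 0$, so $\alpha$ is parallel; equality throughout forces $\lambda_{3}=s/6$ everywhere with $\alpha$ a pointwise $\lambda_{3}$-eigenvector of $W^{-}$.

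By Lemma 1, the parallel anti-self-dual $2$-form $\alpha$ determines a parallel $J$-invariant orthogonal splitting $TM=V_{1}\oplus V_{2}$ into $J$-stable $2$-planes. The de Rham decomposition theorem on the universal cover, combined with Kobayashi's theorem that a compact K\"ahler manifold of positive Ricci curvature is simply connected, realizes $M$ globally as the Riemannian product of two simply connected compact Einstein surfaces of positive constant Gauss curvature $s/4$, hence $M=\mathbb{CP}_{1}\times\mathbb{CP}_{1}$ with the standard product metric of constant curvature, up to rescaling. For the equality statement $H_{av}=\frac{2}{3}H_{max}$, the translation gives $\lambda_{3}=s/6>0$ pointwise, hence $W^{-}\not\equiv 0$, ruling out the self-dual $\mathbb{CP}_{2}$ alternative and leaving only $\mathbb{CP}_{1}\times\mathbb{CP}_{1}$. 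The main obstacle I foresee is the last step, namely upgrading the pointwise parallel splitting to the global product identification; this requires both the de Rham decomposition on the universal cover and Kobayashi's simple-connectedness input.
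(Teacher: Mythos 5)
Your proposal is correct and follows the same main line as the paper: translate the hypothesis via Lemma 3 into the pointwise bound $\lambda_{3}\leq\frac{s}{6}$ (forcing $s>0$), feed this into the Weitzenb\"ock formula for anti-self-dual $2$-forms, and split into the cases $b_{-}=0$ (handled by Lemma 4) and $b_{-}\neq 0$ (a parallel anti-self-dual form exists). The only genuine divergence is the endgame in the $b_{-}\neq 0$ case: the paper converts the parallel anti-self-dual form into an opposite-orientation K\"ahler structure, notes that $M$ and $\overline{M}$ are then both Fano, and excludes $\mathbb{CP}_{2}\#\overline{\mathbb{CP}_{2}}$ by Matsushima's obstruction, citing Besse for the final identification; you instead use the parallel form to produce a parallel $J$-invariant splitting of $TM$ and invoke de Rham together with Kobayashi's simple-connectedness theorem to realize $M$ directly as a product of two round $2$-spheres. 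Your route is more self-contained and Riemannian-geometric, avoiding the complex-surface classification input, at the cost of having to justify the global product decomposition; both are standard and valid, and your extra observation that equality in the integrated Weitzenb\"ock inequality forces $\lambda_{3}=\frac{s}{6}$ with $\alpha$ an eigenvector is consistent with (though not needed for) the paper's argument.
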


\begin{proof}
Suppose $H_{av}\geq\frac{2}{3}H_{max}$.
Let $\lambda_{i}$ be the eigenvalues of $W^{-}$ such that $\lambda_{1}\leq\lambda_{2}\leq\lambda_{3}$. 
 By Lemma 3, we have 
\[\frac{s}{6}\geq\frac{2}{3}\left(\frac{s}{6}+\frac{\lambda_{3}}{2}\right).\]
From this, it follows that $\frac{s}{6}-\lambda_{3}\geq 0$ and the scalar curvature is positive. 
The following Weitzenb\"ock formula for anti-self-dual 2-forms, 
\[\Delta\omega=\nabla^{*}\nabla\omega-2W^{-}(\omega, \cdot)+\frac{s}{3}\omega,\]
implies either $b_{-}=0$ or there exists a parallel anti-self-dual 2-form. 
Suppose $b_{-}=0$. Then by Lemma 4, 
the result follows. 
Suppose $b_{-}\neq 0$. Then there exists a parallel anti-self-dual 2-form. 
Thus, $(\overline{M}, g)$ is K\"ahler, where $\overline{M}$ is $M$ with the opposite orientation. 
Since $(M, J)$, $\overline{M}$ are Fano surfaces and $\mathbb{CP}_{2}\#\overline{\mathbb{CP}_{2}}$ does not admit a K\"ahler-Einstein metric [16], 
$(M, g, J)$ is $\mathbb{CP}_{1}\times\mathbb{CP}_{1}$ with the standard metric ([8], p.125).

Suppose $H_{av}=\frac{2}{3}H_{max}$. Then we have $\frac{s}{6}=\lambda_{3}$. 
Since $s$ is nonzero, $\lambda_{3}>0$. In particular, $W^{-}\not\equiv0$.  
\end{proof}

\begin{Theorem}
Let $(M, g, \omega)$ be a compact K\"ahler-Einstein surface with nonzero scalar curvature and $H_{max}\leq 2H_{min}$, 
where $H_{max}(H_{min})$ is the maximum(minimum) of the holomorphic sectional curvature. Then
\begin{itemize}
\item $(M, J)$ is biholomorphic to $\mathbb{CP}_{2}$
and $(M, g)$ is isometric to the Fubini-Study metric on $\mathbb{CP}_{2}$ up to rescaling; or
\item $(M, J)$ is biholomorphic to $\mathbb{CP}_{1}\times\mathbb{CP}_{1}$
and $(M, g)$ is isometric to the standard product metric of constant curvature on $\mathbb{CP}_{1}\times\mathbb{CP}_{1}$ up to rescaling. 
\end{itemize}
If $(M, g, \omega)$ has $H_{max}=2H_{min}$, then  
$(M, J)$ is biholomorphic to $\mathbb{CP}_{1}\times\mathbb{CP}_{1}$
and $(M, g)$ is isometric to the standard product metric of constant curvature on $\mathbb{CP}_{1}\times\mathbb{CP}_{1}$ up to rescaling. 
\end{Theorem}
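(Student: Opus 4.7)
The plan is to reduce the hypothesis $H_{max} \leq 2H_{min}$ to the pointwise eigenvalue condition $\lambda_{3} \leq s/6$ that already drives the proof of Theorem 4, and then invoke that argument essentially verbatim.

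First, I would translate the hypothesis using Lemma 3. Substituting $H_{max} = \frac{s}{6} + \frac{\lambda_{3}}{2}$ and $H_{min} = \frac{s}{6} + \frac{\lambda_{1}}{2}$, the condition $H_{max} \leq 2H_{min}$ becomes $\lambda_{3} - 2\lambda_{1} \leq \frac{s}{3}$. The trace-free identity $\lambda_{1} + \lambda_{2} + \lambda_{3} = 0$ combined with $\lambda_{1} \leq \lambda_{2}$ gives $-2\lambda_{1} \geq \lambda_{3} \geq 0$, hence
\[0 \leq 2\lambda_{3} \leq \lambda_{3} - 2\lambda_{1} \leq \frac{s}{3}.\]
Using $s \neq 0$, this forces $s > 0$ and, crucially, $\lambda_{3} \leq \frac{s}{6}$ at every point of $M$.

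Next, I would run the Weitzenböck argument of Theorem 4 without modification. For any harmonic anti-self-dual $2$-form $\alpha$, the pointwise estimate $\langle W^{-}\alpha, \alpha\rangle \leq \lambda_{3}|\alpha|^{2} \leq \frac{s}{6}|\alpha|^{2}$ together with constancy of $s$ integrates to $\nabla\alpha = 0$. So either $b_{-} = 0$ --- in which case Lemma 4 supplies constant positive holomorphic sectional curvature $s/6$ and $(M,g,J)$ is $\mathbb{CP}_{2}$ with the Fubini-Study metric --- or a parallel anti-self-dual $2$-form exists and $(\bar{M}, g)$ is Kähler. In the latter case both $(M,J)$ and $\bar{M}$ are Fano, and the same appeal as in Theorem 4 to the non-existence of a Kähler-Einstein metric on $\mathbb{CP}_{2}\#\overline{\mathbb{CP}_{2}}$ forces $(M,g,J)$ to be $\mathbb{CP}_{1}\times\mathbb{CP}_{1}$ with the standard product metric.

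For the equality case $H_{max} = 2H_{min}$, the Fubini-Study $\mathbb{CP}_{2}$ has constant holomorphic sectional curvature $s/6$, so it realises $H_{max} = 2H_{min}$ only when $s = 0$, contradicting $s \neq 0$. Hence only the $\mathbb{CP}_{1}\times\mathbb{CP}_{1}$ case remains. The hard part is really only the first step: the inequality $\lambda_{3} - 2\lambda_{1} \leq s/3$ looks a priori weaker than $\lambda_{3} \leq s/6$, but the trace-free and ordering conditions on $W^{-}$ close the gap automatically. Once this reduction is in hand, the Weitzenböck and Fano-classification steps of Theorem 4 carry over with no changes.
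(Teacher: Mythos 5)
Your proposal is correct and follows essentially the same route as the paper: the paper's proof of this theorem consists precisely of using Lemma 3 together with $2\lambda_{1}\leq-\lambda_{3}$ to deduce $\frac{s}{6}-\lambda_{3}\geq 0$, and then tacitly reuses the Weitzenb\"ock/Lemma~4/Fano argument from the preceding theorem, which you have simply written out explicitly (including the equality case, which the paper leaves implicit).
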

\begin{proof}
Let $\lambda_{i}$ be the eigenvalues of $W^{-}$ such that $\lambda_{1}\leq\lambda_{2}\leq\lambda_{3}$. 
By Lemma 3, it follows that 
\[\frac{s}{6}+\frac{\lambda_{3}}{2}\leq 2\left(\frac{s}{6}+\frac{\lambda_{1}}{2}\right)\leq \frac{s}{3}-\frac{\lambda_{3}}{2}\]
using $2\lambda_{1}\leq-\lambda_{3}$. 
It follows that $\frac{s}{6}-\lambda_{3}\geq 0$.
\end{proof}

\begin{Theorem}
Let $(M, g, \omega)$ be a compact K\"ahler-Einstein surface of nonzero scalar curvature. 
Suppose $H_{av}\leq\frac{4}{3}H_{min}$, where $H_{av}$ is the average of the holomorphic sectional curvature 
and $H_{max}$ is the maximum of the holomorphic sectional curvature. Then
\begin{itemize}
\item $(M, J)$ is biholomorphic to $\mathbb{CP}_{2}$
and $(M, g)$ is isometric to the Fubini-Study metric on $\mathbb{CP}_{2}$ up to rescaling; or
\item $(M, J)$ is biholomorphic to $\mathbb{CP}_{1}\times\mathbb{CP}_{1}$
and $(M, g)$ is isometric to the standard product metric of constant curvature on $\mathbb{CP}_{1}\times\mathbb{CP}_{1}$ up to rescaling. 
\end{itemize}
\end{Theorem}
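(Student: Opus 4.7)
The plan is to translate the hypothesis $H_{av}\le\tfrac{4}{3}H_{min}$ into an eigenvalue inequality for $W^{-}$ by means of Lemma 3, and then run exactly the Weitzenb\"ock dichotomy used in the proof of Theorem 4.

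First I would substitute $H_{av}=s/6$ and $H_{min}=s/6+\lambda_{1}/2$ from Lemma 3 into the hypothesis. The inequality $\tfrac{s}{6}\le\tfrac{4}{3}\bigl(\tfrac{s}{6}+\tfrac{\lambda_{1}}{2}\bigr)$ simplifies, after clearing denominators, to $\lambda_{1}\ge -s/12$. This single scalar inequality is what drives everything that follows, so I would record it and use it in two ways.

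Second, I would extract the sign of the scalar curvature. Since $\lambda_{1}\le\lambda_{2}\le\lambda_{3}$ and $\lambda_{1}+\lambda_{2}+\lambda_{3}=0$, one always has $\lambda_{1}\le 0$. The inequality $\lambda_{1}\ge -s/12$ is therefore incompatible with $s<0$ (it would force $\lambda_{1}>0$), so the nonzero scalar curvature hypothesis yields $s>0$. Next, using $\lambda_{2}\ge\lambda_{1}\ge -s/12$ and the vanishing trace,
\[
\lambda_{3}=-\lambda_{1}-\lambda_{2}\le \tfrac{s}{12}+\tfrac{s}{12}=\tfrac{s}{6},
\]
so we land in the pinching regime $s/6-\lambda_{3}\ge 0$ that drove Theorem 4.

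At this point the argument of Theorem 4 transplants verbatim. I would apply the Weitzenb\"ock formula $\Delta\alpha=\nabla^{*}\nabla\alpha-2W^{-}(\alpha)+\tfrac{s}{3}\alpha$ on anti-self-dual 2-forms; the pointwise bound $2\langle W^{-}\alpha,\alpha\rangle\le\tfrac{s}{3}|\alpha|^{2}$ coming from $\lambda_{3}\le s/6$ forces any harmonic anti-self-dual 2-form to be parallel. The dichotomy is then: either $b_{-}=0$, in which case Lemma 4 gives constant holomorphic sectional curvature and $(M,g)$ is the Fubini--Study $\mathbb{CP}_{2}$; or a nontrivial parallel anti-self-dual 2-form exists, so $(\overline{M},g)$ is also K\"ahler, both orientations are Fano (since $s>0$), and ruling out $\mathbb{CP}_{2}\#\overline{\mathbb{CP}_{2}}$ by the LeBrun obstruction cited in Theorem 4 leaves only $\mathbb{CP}_{1}\times\mathbb{CP}_{1}$ with the standard product metric.

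The step I expect to require the most care is the first one: one must check that $H_{av}\le\tfrac{4}{3}H_{min}$ really does force simultaneously $s>0$ and the $W^{-}$-pinching $\lambda_{3}\le s/6$, since only then does the Weitzenb\"ock argument close. Once that is verified the conclusion is essentially a corollary of the tools already assembled in Lemma 4 and Theorem 4.
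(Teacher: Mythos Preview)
Your argument is correct, and it is genuinely simpler than the route the paper takes. Both proofs begin by reading off $\lambda_{1}\ge -s/12$ from Lemma~3 and deducing $s>0$. From there, however, the paper proceeds by bounding $|W^{-}|^{2}\le 6\lambda_{1}^{2}\le s^{2}/24$ pointwise and then invoking the Gursky--LeBrun integral inequality (Theorem~7): if $W^{-}\not\equiv 0$ then $\int_{M}|W^{-}|^{2}\,d\mu\ge\int_{M}\tfrac{s^{2}}{24}\,d\mu$, so equality must hold throughout and the eigenvalues of $W^{-}$ are forced to be $(-\tfrac{s}{12},-\tfrac{s}{12},\tfrac{s}{6})$; this in turn yields a parallel anti-self-dual form and hence $\mathbb{CP}_{1}\times\mathbb{CP}_{1}$. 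Your route bypasses Gursky--LeBrun entirely: from $\lambda_{1},\lambda_{2}\ge -s/12$ you get $\lambda_{3}\le s/6$ directly, which is exactly the pointwise pinching needed for the Weitzenb\"ock dichotomy of Theorem~4, so the conclusion follows from tools already in hand (Lemma~4 and the argument of Theorem~4). The trade-off is that the paper's method illustrates how the sharp integral estimate of Gursky--LeBrun can substitute for a pointwise eigenvalue bound, but in this particular case your observation that the pointwise bound $\lambda_{3}\le s/6$ is already available makes the heavier machinery unnecessary.
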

\begin{proof}
Let $\lambda_{i}$ be the eigenvalues of $W^{-}$ such that $\lambda_{1}\leq\lambda_{2}\leq\lambda_{3}$. 
From  $H_{av}\leq\frac{4}{3}H_{min}$ and Lemma 3, we get 
\[-\frac{s}{12}\leq \lambda_{1}.\] 
Since $\lambda_{1}\leq 0$, we get $s>0$. 
Then we have [12]
\[\frac{1}{\sqrt{6}}|W^{-}|\leq -\lambda_{1}\leq\frac{s}{12}.\]
Since $\lambda_{1}+\lambda_{2}+\lambda_{3}=0$, 
we have 
\[|W^{-}|^{2}=(\lambda_{1})^{2}+(\lambda_{2})^{2}+(\lambda_{3})^{2}+
(\lambda_{1}-\lambda_{2}-\lambda_{3})(\lambda_{1}+\lambda_{2}+\lambda_{3})\]
\[=2((\lambda_{1})^{2}-\lambda_{2}\lambda_{3}).\]
If $\lambda_{2}\geq 0$, then $|W^{-}|^{2}\leq 2(\lambda_{1})^{2}$.
Suppose $\lambda_{2}\leq 0$. 
Since $-\lambda_{2}\leq -\lambda_{1}$ and $\lambda_{3}\leq-2\lambda_{1}$, it follows that
\[|W^{-}|^{2}=2((\lambda_{1})^{2}-\lambda_{2}\lambda_{3})\leq 2((\lambda_{1})^{2}+2(\lambda_{1})^{2})\leq 6(\lambda_{1})^{2}.\]
Moreover, the equality holds if and only if $\lambda_{1}=\lambda_{2}$ and $\lambda_{3}+2\lambda_{1}=0$. 
From this, it follows that 
\[\int_{M}|W^{-}|^{2}d\mu\leq \int_{M}\frac{s^{2}}{24}d\mu.\]
On the other hand, a compact, oriented Einstein four-manifold with positive scalar curvature and $W^{-}\not\equiv 0$ has 
\[\int_{M}|W^{-}|^{2}d\mu\geq\int_{M}\frac{s^{2}}{24}d\mu,\]
with equality if and only if $\nabla W^{-}=0$ by Gursky-LeBrun's Theorem below [12]. 
Suppose $W^{-}\not\equiv 0$. 
Since $\int_{M}|W^{-}|^{2}d\mu=\int_{M}\frac{s^{2}}{24}d\mu$, it follows that 
\[\frac{1}{\sqrt{6}}|W^{-}|=-\lambda_{1}=\frac{s}{12}.\]
Then we have $\lambda_{1}=-\frac{s}{12}, \lambda_{2}=-\frac{s}{12}, \lambda_{3}=\frac{s}{6}$.
Since $\int_{M}|W^{+}|^{2}d\mu=\int_{M}\frac{s^{2}}{24}d\mu$, $\tau=0$. 
Therefore, from the Weitzenb\"ock formula, we get a parallel anti-self-dual 2-form. 
Thus, $(M, g, \omega)$ is isometric to the standard product metric of constant curvature on $\mathbb{CP}_{1}\times\mathbb{CP}_{1}$.
\end{proof}

\begin{Theorem}
(Gursky-LeBrun)
Let $(M, g)$ be a compact, oriented Einstein four-manifolds with positive scalar curvature. 
Suppose $W^{+}\not\equiv 0$. 
Then 
\[\int_{M}|W^{+}|^{2}d\mu\geq\int_{M}\frac{s^{2}}{24}d\mu\]
with equality if and only if $\nabla W^{+}=0$. 
\end{Theorem}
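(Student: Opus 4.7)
The plan is to derive the inequality from the Bochner--Weitzenb\"ock identity for $W^{+}$ on an Einstein four-manifold, combined with the sharp algebraic bound $\det W^{+}\leq |W^{+}|^{3}/(3\sqrt{6})$, the refined Kato inequality for $W^{+}$, and the conformal Sobolev inequality arising from Obata's identification of the Yamabe constant of an Einstein conformal class. First, I would observe that the second Bianchi identity together with the Einstein condition implies $\delta W^{+}=0$; regarding $W^{+}$ as a trace-free symmetric endomorphism of $\Lambda^{+}$, the Weitzenb\"ock formula then gives
\[
\frac{1}{2}\Delta |W^{+}|^{2} = |\nabla W^{+}|^{2} + \frac{s}{2}|W^{+}|^{2} - 18\det W^{+},
\]
which integrates over $M$ to
\[
\int_{M}|\nabla W^{+}|^{2}\, d\mu + \frac{s}{2}\int_{M}|W^{+}|^{2}\, d\mu = 18\int_{M}\det W^{+}\, d\mu.
\]
Since the three eigenvalues of $W^{+}$ sum to zero, a Lagrange-multiplier calculation yields $\det W^{+}\leq |W^{+}|^{3}/(3\sqrt{6})$, with equality iff two of them coincide. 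Setting $u=|W^{+}|$, this gives
\[
\sqrt{6}\int_{M}u^{3}\, d\mu \geq \int_{M}|\nabla W^{+}|^{2}\, d\mu + \frac{s}{2}\int_{M}u^{2}\, d\mu.
\]

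Next I would apply the refined Kato inequality $|\nabla W^{+}|^{2}\geq 3\,|\nabla u|^{2}$ for divergence-free, trace-free, symmetric sections of $\mathrm{End}(\Lambda^{+})$ to obtain
\[
2\sqrt{6}\int_{M}u^{3}\, d\mu \geq 6\int_{M}|\nabla u|^{2}\, d\mu + s\int_{M}u^{2}\, d\mu.
\]
By Obata's theorem, the Yamabe constant of the Einstein conformal class $[g]$ equals $sV^{1/2}$, where $V=\mathrm{vol}(M,g)$, so the conformal Sobolev inequality applied to $u$ reads
\[
sV^{1/2}\Bigl(\int_{M}u^{4}\, d\mu\Bigr)^{1/2} \leq 6\int_{M}|\nabla u|^{2}\, d\mu + s\int_{M}u^{2}\, d\mu \leq 2\sqrt{6}\int_{M}u^{3}\, d\mu.
\]
Combined with the Cauchy--Schwarz estimate $\int u^{3}\leq (\int u^{2})^{1/2}(\int u^{4})^{1/2}$ and the hypothesis $W^{+}\not\equiv 0$ (which guarantees $\int u^{4}>0$), dividing by $(\int u^{4})^{1/2}$ and squaring yields $\int_{M}|W^{+}|^{2}\, d\mu \geq s^{2}V/24$, as desired.

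The main obstacle is justifying the factor $3$ in the refined Kato inequality for $W^{+}$ under the harmonic condition $\delta W^{+}=0$: the sharpness of the final constant $1/24$ depends delicately on this, and a weaker Kato constant would require a different arrangement of the test-function argument. For the equality case, the chain of inequalities must be sharp throughout: equality in Cauchy--Schwarz and in Obata's Sobolev inequality forces $u=|W^{+}|$ to be constant; the refined Kato equality then forces $\nabla W^{+}=0$; the determinant bound being tight pointwise forces $W^{+}$ to have two equal eigenvalues at each point; and $|W^{+}|^{2}=s^{2}/24$ then falls out of the integrated Weitzenb\"ock identity. Conversely, if $\nabla W^{+}=0$, then $|W^{+}|$ is constant and the Weitzenb\"ock formula forces $|W^{+}|^{2}=s^{2}/24$ pointwise, giving equality.
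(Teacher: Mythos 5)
The paper does not actually prove this statement; it is quoted verbatim from Gursky--LeBrun [12], so your attempt has to be measured against their argument. Your architecture is the correct one --- integrated Weitzenb\"ock identity, the algebraic bound $\det W^{+}\leq |W^{+}|^{3}/(3\sqrt{6})$, a refined Kato inequality, and the Sobolev inequality coming from Obata's computation $Y([g])=s\,V^{1/2}$ --- but the step you yourself flag as the main obstacle is exactly where the proof fails: the refined Kato constant for $\delta W^{+}=0$ in dimension four is $\tfrac{5}{3}$, not $3$; that is, $|\nabla W^{+}|^{2}\geq \tfrac{5}{3}\,|\nabla |W^{+}||^{2}$, and no constant as large as $3$ is available. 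Your chain needs the coefficient $6$ in front of $\int|\nabla u|^{2}$ in order to assemble the conformal Laplacian $-6\Delta+s$, and with the true constant you only obtain $\tfrac{10}{3}\int|\nabla u|^{2}+s\int u^{2}\leq 2\sqrt{6}\int u^{3}$, to which the Yamabe inequality does not apply and from which the sharp constant $1/24$ cannot be recovered.

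The repair --- and this is what Gursky--LeBrun actually do --- is the substitution $f=|W^{+}|^{1/3}$. Writing $u=f^{3}$, one has $u\Delta u=3f^{5}\Delta f+6f^{4}|\nabla f|^{2}$ and $|\nabla u|^{2}=9f^{4}|\nabla f|^{2}$, so the Kato constant $\tfrac{5}{3}$ contributes $\tfrac{2}{3}\cdot 9=6$ and cancels the extra gradient term exactly, yielding the pointwise inequality $(-6\Delta+s)f\leq 2\sqrt{6}\,|W^{+}|\,f$ away from the zero set of $W^{+}$. Pairing this with $f$ (equivalently, noting that the lowest eigenvalue of $-6\Delta+s-2\sqrt{6}|W^{+}|$ is nonpositive), then applying H\"older and $Y([g])=sV^{1/2}$, gives $sV^{1/2}\leq 2\sqrt{6}\bigl(\int_{M}|W^{+}|^{2}d\mu\bigr)^{1/2}$, which is the desired estimate. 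Three smaller defects: equality in the determinant bound requires specifically $\lambda_{1}=\lambda_{2}=-\lambda_{3}/2$, not merely that two eigenvalues coincide; in the converse direction of the equality case, $\nabla W^{+}=0$ plus the Weitzenb\"ock identity only gives $36\det W^{+}=s|W^{+}|^{2}$ pointwise, and to conclude $|W^{+}|^{2}=s^{2}/24$ you must also argue that a nonzero parallel $W^{+}$ has degenerate spectrum (a parallel eigenform makes $g$ K\"ahler, forcing the spectrum $(s/6,-s/12,-s/12)$); and the regularity of $|W^{+}|^{1/3}$ across the zero set of $W^{+}$ must be addressed, e.g.\ by working with $(|W^{+}|^{2}+\epsilon)^{1/2}$ and letting $\epsilon\to 0$.
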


\vspace{20pt}

\section{\large\textbf{K\"ahler surfaces with nonpositive biorthogonal curvature}}\label{S:Intro}
In this section, we show that a compact K\"ahler surface with nonpositive biorthogonal curvature has nonnegative signature. 
Moreover, we show that if a compact K\"ahler surface with nonpositive sectional curvature has zero signature, 
then the metric is locally a product.

\begin{Proposition}
Let $(M, g, \omega)$ be a K\"ahler surface. 
Let $K^{\perp}_{max}$ be the maximum of the biorthogonal curvature and $K^{\perp}_{min}$ be the minimum of the biorthogonal curvature. 
Let $B_{max}$ be the maximum of the orthogonal holomorphic bisectional curvature and $B_{min}$ be the minimum of the orthogonal holomorphic bisectional curvature. 
Let $\lambda_{i}^{-}$ be the eigenvalues of $W^{-}$ such that  $\lambda_{1}^{-}\leq\lambda_{2}^{-}\leq\lambda_{3}^{-}$. 
If the scalar curvature $s$ is nonnegative, then 
\[K^{\perp}_{min}=\frac{s}{24}+\frac{\lambda_{1}^{-}}{2} \hspace{10pt}and \hspace{10pt} K^{\perp}_{min}\leq \frac{B_{min}}{2}.\]
If the scalar curvature is nonpositive, then 
\[K^{\perp}_{max}=\frac{s}{24}+\frac{\lambda_{3}^{-}}{2} \hspace{10pt}and \hspace{10pt} K^{\perp}_{max}\geq\frac{B_{max}}{2}.\]
\end{Proposition}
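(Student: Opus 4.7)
My plan is to express the biorthogonal curvature of $(P,P^\perp)$ via the self-dual/anti-self-dual decomposition of the 2-vector dual to $P$. Pick an oriented orthonormal frame $\{e_1,e_2,e_3,e_4\}$ with $P=\mathrm{span}\{e_1,e_2\}$, $P^\perp=\mathrm{span}\{e_3,e_4\}$, and set $\alpha=e_1\wedge e_2$, $\beta=e_3\wedge e_4$. Since $*\alpha=\beta$, we have $\alpha^+=\beta^+$ and $\alpha^-=-\beta^-$, so the cross terms cancel and
\[
K(P)+K(P^\perp)=R(\alpha,\alpha)+R(\beta,\beta)=2R(\alpha^+,\alpha^+)+2R(\alpha^-,\alpha^-).
\]
Applying the block form of $\mathfrak R$ stated in the introduction together with $|\alpha^\pm|^2=\tfrac12$ then yields $R(\alpha^\pm,\alpha^\pm)=W^\pm(\alpha^\pm,\alpha^\pm)+\tfrac{s}{24}$, hence
\[
K^\perp(P,P^\perp)=W^+(\alpha^+,\alpha^+)+W^-(\alpha^-,\alpha^-)+\tfrac{s}{12}.
\]

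Next I would invoke the K\"ahler structure: on any K\"ahler surface $W^+$ has spectrum $\{s/6,-s/12,-s/12\}$, with the normalized K\"ahler form $\hat\omega=\omega/|\omega|$ spanning the eigenline for $s/6$. Writing $c_0=\langle\alpha^+,\hat\omega\rangle$, so that $0\leq c_0^2\leq|\alpha^+|^2=\tfrac12$, a direct expansion gives $W^+(\alpha^+,\alpha^+)=\tfrac{s}{4}c_0^2-\tfrac{s}{24}$, and therefore
\[
K^\perp(P,P^\perp)=\tfrac{s}{24}+\tfrac{s}{4}c_0^2+W^-(\alpha^-,\alpha^-).
\]
Using the classical identification of the oriented Grassmannian $Gr^+(2,4)$ with $S^2\times S^2$ (the two factors being the spheres of radius $1/\sqrt2$ in $\Lambda^+$ and $\Lambda^-$), I can treat $\alpha^+$ and $\alpha^-$ as independent parameters. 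The quantity $W^-(\alpha^-,\alpha^-)$ then ranges over $[\lambda_1^-/2,\lambda_3^-/2]$, and $c_0^2$ over $[0,\tfrac12]$. When $s\geq 0$ the term $\tfrac{s}{4}c_0^2$ is minimized at $c_0=0$, giving $K^\perp_{\min}=\tfrac{s}{24}+\lambda_1^-/2$; when $s\leq 0$ it is maximized at $c_0=0$, giving $K^\perp_{\max}=\tfrac{s}{24}+\lambda_3^-/2$.

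For the comparisons with $B_{\min}$ and $B_{\max}$, I would substitute Lemma~2, which reads $B_{\min}=\tfrac{s}{12}-\tfrac{\lambda_3^-}{2}$ and $B_{\max}=\tfrac{s}{12}-\tfrac{\lambda_1^-}{2}$. In the case $s\geq 0$,
\[
\tfrac{B_{\min}}{2}-K^\perp_{\min}=-\tfrac{\lambda_3^-}{4}-\tfrac{\lambda_1^-}{2}=\tfrac{\lambda_2^--\lambda_1^-}{4}\geq 0
\]
after using $\lambda_1^-+\lambda_2^-+\lambda_3^-=0$, and symmetrically in the case $s\leq 0$, $K^\perp_{\max}-\tfrac{B_{\max}}{2}=\tfrac{\lambda_3^--\lambda_2^-}{4}\geq 0$. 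The most delicate point I expect is verifying that $\alpha^+$ and $\alpha^-$ really can be prescribed independently on their respective $1/\sqrt2$-spheres — i.e., that every such pair arises as the decomposition of some unit simple bivector — since only then can the two summands $\tfrac{s}{4}c_0^2$ and $W^-(\alpha^-,\alpha^-)$ be extremized simultaneously in a single 2-plane. This rests on the twistor identification $Gr^+(2,4)=S^2\times S^2$ together with the K\"ahler normal form of $W^+$; both are classical, but they are the crucial structural inputs on which the optimization hinges.
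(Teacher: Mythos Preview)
Your argument is correct and follows the same overall logic as the paper: both use the K\"ahler normal form of $W^{+}$ to reduce $K^{\perp}_{\min}$ (resp.\ $K^{\perp}_{\max}$) to $\tfrac{s}{24}+\tfrac{\lambda_1^-}{2}$ (resp.\ $\tfrac{s}{24}+\tfrac{\lambda_3^-}{2}$), and then invoke Lemma~2 together with the trace-free condition on $W^{-}$ for the comparison with $B_{\min}/2$ and $B_{\max}/2$. The difference is one of packaging: the paper simply quotes the Costa--Ribeiro formulas $K^{\perp}_{\min}-\tfrac{s}{12}=\tfrac{\lambda_1^++\lambda_1^-}{2}$ and $K^{\perp}_{\max}-\tfrac{s}{12}=\tfrac{\lambda_3^++\lambda_3^-}{2}$ from [6], whereas you rederive them from scratch via the decomposition $\alpha=\alpha^++\alpha^-$ and the identification $Gr^{+}(2,4)\cong S^{2}\times S^{2}$. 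Your route is more self-contained and makes transparent \emph{why} the two extremizations can be performed independently (the point you flagged as ``delicate'' is indeed classical: for any $\alpha^{\pm}$ with $|\alpha^{\pm}|^{2}=\tfrac12$ one checks $(\alpha^{+}+\alpha^{-})\wedge(\alpha^{+}+\alpha^{-})=0$, so the sum is simple). The paper's route is shorter but relies on an external reference; both reach the same inequalities by the same algebra.
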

\begin{proof}
Let $\lambda_{i}^{+}$ be the eigenvalues of $W^{+}$ such that $\lambda_{1}^{+}\leq\lambda_{2}^{+}\leq\lambda_{3}^{+}$.
The following formula was shown in [6]
\[K^{\perp}_{max}-\frac{s}{12}=\frac{\lambda_{3}^{+}+\lambda_{3}^{-}}{2},\]
where
\[K_{max}^{\perp}=max_{P\in T_{p}M}\left(\frac{K(P)+K(P^{\perp})}{2}\right) \]
and 
\[K^{\perp}_{min}-\frac{s}{12}=\frac{\lambda_{1}^{+}+\lambda_{1}^{-}}{2},\]
where
\[K_{min}^{\perp}=min_{P\in T_{p}M}\left(\frac{K(P)+K(P^{\perp})}{2}\right). \]
On a K\"ahler surface, we have 
\[W^{+}=\begin{pmatrix}-\frac{s}{12}&&\\&-\frac{s}{12}&\\&&\frac{s}{6}\end{pmatrix}.\]
If the scalar curvature is nonnegative, we have $\lambda_{3}^{+}=\frac{s}{6}$ and $\lambda_{1}^{+}=-\frac{s}{12}$. 
Using $2\lambda_{1}^{-}+\lambda_{3}^{-}\leq 0$ and Lemma 2, it follows that
\[K^{\perp}_{min}=\frac{s}{24}+\frac{\lambda_{1}^{-}}{2}\leq\frac{s}{24}-\frac{\lambda_{3}^{-}}{4}=\frac{B_{min}}{2}.\]
If the scalar curvature is nonpositive, we have $\lambda_{3}^{+}=-\frac{s}{12}$ and $\lambda_{1}^{+}=\frac{s}{6}$.
Since $2\lambda_{3}^{-}+\lambda_{1}^{-}\geq 0$, we have
\[K^{\perp}_{max}=\frac{s}{24}+\frac{\lambda_{3}^{-}}{2}\geq\frac{s}{24}-\frac{\lambda_{1}^{-}}{4}=\frac{B_{max}}{2}.\]
\end{proof}

\begin{Theorem}
Let $(M, g, \omega)$ be a compact K\"ahler surface with nonpositive biorthogonal curvature. 
Then $\tau(M)\geq 0$, where $\tau(M)$ is the signature of $M$. 
Moreover, if $\tau(M)=0$, then 
\[W^{-}=\begin{pmatrix}-\frac{s}{12}&&\\&-\frac{s}{12}&\\&&\frac{s}{6}\end{pmatrix}.\]
\end{Theorem}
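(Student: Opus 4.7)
The plan is to combine a pointwise bound $|W^-|^2\leq s^2/24$ with the Kähler identity $|W^+|^2=s^2/24$ and the signature formula
\[
\tau(M)=\frac{1}{12\pi^2}\int_M\bigl(|W^+|^2-|W^-|^2\bigr)\,d\mu,
\]
so that both the inequality and its equality case fall out of the same computation.

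First, I would pin down the sign of $s$ pointwise. For any orthonormal frame $\{e_1,e_2,e_3,e_4\}$ at a point, the three complementary pairings of the frame give biorthogonal planes whose curvatures satisfy
\[
K^{\perp}_{12,34}+K^{\perp}_{13,24}+K^{\perp}_{14,23}=\frac{s}{4},
\]
since $s=2\sum_{i<j}K(e_i\wedge e_j)$ and each sectional curvature contributes to exactly one pair. The hypothesis $K^{\perp}\leq 0$ therefore forces $s\leq 0$ at every point. With the sign of $s$ settled, I can then invoke the "nonpositive scalar" branch of Proposition 2, which gives $K^{\perp}_{\max}=s/24+\lambda_3^-/2\leq 0$ and hence the pointwise bound $\lambda_3^-\leq -s/12$.

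Second, I would prove the purely algebraic lemma that any traceless symmetric operator on a three-dimensional space, with eigenvalues $\lambda_1^-\leq\lambda_2^-\leq\lambda_3^-$, satisfies
\[
|W^-|^2\leq 6(\lambda_3^-)^2,
\]
with equality if and only if $\lambda_2^-=\lambda_3^-$ and $\lambda_1^-=-2\lambda_3^-$. (Writing $\lambda_2^-$ as a free variable in $[-\lambda_3^-/2,\lambda_3^-]$ reduces $|W^-|^2$ to a quadratic that attains its maximum at the upper endpoint.) Combining with the previous step and noting $\lambda_3^-\geq 0$, one gets
\[
|W^-|^2\leq 6(\lambda_3^-)^2\leq 6\bigl(-s/12\bigr)^2=\frac{s^2}{24}.
\]
Integrating and feeding this into the signature formula together with the Kähler identity $|W^+|^2=s^2/24$ immediately yields $\tau(M)\geq 0$.

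For the equality case $\tau(M)=0$, the nonnegative pointwise integrand $s^2/24-|W^-|^2$ must vanish identically, forcing both $\lambda_3^-=-s/12$ and equality in the algebraic lemma. The latter produces $\lambda_2^-=\lambda_3^-=-s/12$ and $\lambda_1^-=-2\lambda_3^-=s/6$, which is precisely the stated matrix form of $W^-$. The only genuinely substantive step is the first one: observing that the trace identity for $K^{\perp}$ at a point forces $s\leq 0$, which is what unlocks Proposition 2. The rest is the elementary extremal analysis of a traceless $3\times 3$ symmetric matrix together with standard Kähler/signature identities.
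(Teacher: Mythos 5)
Your proposal is correct and follows essentially the same route as the paper: deduce $s\le 0$ from the hypothesis, use Proposition 2 to get $\lambda_3^-\le -s/12$, combine with the pointwise inequality $|W^-|^2\le 6(\lambda_3^-)^2$ (which the paper cites from Gursky--LeBrun rather than reproving), feed this and the K\"ahler identity $|W^+|^2=s^2/24$ into the signature formula, and read off the equality case from the rigidity of that algebraic inequality. The only differences are that you spell out the trace identity forcing $s\le 0$ and prove the $\sqrt{6}$-inequality directly, both of which are fine.
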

\begin{proof}
We note that the scalar curvature is nonpositive since the biorthogonal curvature is nonpositive. 
Let $\lambda_{i}$ be the eigenvalues of $W^{-}$ such that $\lambda_{1}\leq \lambda_{2}\leq \lambda_{3}$.
Since 
\[K_{max}^{\perp}=\frac{s}{24}+\frac{\lambda_{3}}{2}\]
by Proposition 2, it follows that 
\[\lambda_{3}\leq -\frac{s}{12}.\]
Note that we have $\lambda_{1}+\lambda_{2}+\lambda_{3}=0$ and $|W^{-}|^{2}=(\lambda_{1})^{2}+(\lambda_{2})^{2}+(\lambda_{3})^{2}$.
The following formula was shown in [12]
\[\frac{1}{\sqrt{6}}|W^{-}|\leq\lambda_{3}.\]
Moreover, the equality holds if and only if $\lambda_{1}+2\lambda_{3}=0$ and $\lambda_{2}=\lambda_{3}$. 
Since $\lambda_{3}\leq-\frac{s}{12}$, it follows that 
\[\frac{1}{\sqrt{6}}|W^{-}|\leq-\frac{s}{12}.\]
Then we have 
\[|W^{-}|^{2}\leq\frac{s^{2}}{24}.\]

For a K\"ahler surface, we have $|W^{+}|^{2}= \frac{s^{2}}{24}$.
Then the signature formula gives 
\[\tau(M)=\frac{1}{12\pi^{2}}\int_{M}(|W^{+}|^{2}-|W^{-}|^{2})d\mu\geq 0.\]
Suppose $\tau=0$. Then $|W^{-}|^{2}=\frac{s^{2}}{24}$ and therefore, $\lambda_{1}=\frac{s}{6}, \lambda_{2}=-\frac{s}{12}, \lambda_{3}=-\frac{s}{12}$. 
\end{proof}

\newtheorem{Corollary}{Corollary}
\begin{Corollary}
A compact K\"ahler surface with negative biorthogonal curvature has positive signature.
\end{Corollary}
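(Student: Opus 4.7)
The plan is to argue by contradiction, leveraging the sharp information about $W^-$ provided by Theorem 6 in the zero-signature case, and then producing a point where the biorthogonal curvature vanishes by means of Proposition 2.

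First, I would observe that the hypothesis of strictly negative biorthogonal curvature already forces the scalar curvature to be strictly negative at every point. Indeed, pick any orthonormal frame $\{e_1,e_2,e_3,e_4\}$ at $p$; the three complementary pairs of planes $(e_1\wedge e_2,e_3\wedge e_4)$, $(e_1\wedge e_3, e_2\wedge e_4)$, $(e_1\wedge e_4, e_2\wedge e_3)$ express $s(p)$ as $4$ times the sum of their biorthogonal curvatures, so $K^{\perp}<0$ pointwise yields $s<0$ pointwise. Hence the assumption of Theorem 6 (nonpositive biorthogonal curvature) holds, giving $\tau(M)\geq 0$.

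Next, suppose for contradiction that $\tau(M)=0$. The equality case of Theorem 6 then forces, at every point,
\[
W^{-}=\begin{pmatrix}-\tfrac{s}{12}&&\\&-\tfrac{s}{12}&\\&&\tfrac{s}{6}\end{pmatrix},
\]
so that, since $s<0$, the eigenvalues of $W^-$ ordered from smallest to largest are $\lambda_{1}^{-}=s/6$, $\lambda_{2}^{-}=\lambda_{3}^{-}=-s/12$. Applying Proposition 2 in the nonpositive scalar curvature regime yields
\[
K^{\perp}_{\max}=\frac{s}{24}+\frac{\lambda_{3}^{-}}{2}=\frac{s}{24}-\frac{s}{24}=0
\]
at every point of $M$. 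This directly contradicts the hypothesis that $K^{\perp}<0$ everywhere, so we must have $\tau(M)>0$.

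The argument is essentially immediate once Theorem 6 and Proposition 2 are in hand; no genuine obstacle arises. The only step requiring a line of justification (rather than a pure citation) is the preliminary remark that negative biorthogonal curvature implies negative scalar curvature, which is the elementary frame decomposition indicated above and is needed to place us squarely in the nonpositive-$s$ branch of Proposition 2.
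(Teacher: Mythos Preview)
Your proof is correct and follows essentially the same route as the paper's: invoke the signature inequality (the paper's Theorem~8), assume $\tau(M)=0$, use the equality case to identify the eigenvalues of $W^{-}$, and then compute $K^{\perp}_{\max}=0$ to obtain a contradiction. The only cosmetic difference is that you route the final computation through Proposition~2 (and justify $s<0$ to order the eigenvalues), whereas the paper plugs directly into the Costa--Ribeiro formula $K^{\perp}_{\max}=\tfrac{s}{12}+\tfrac{\lambda_3^{+}+\lambda_3^{-}}{2}$; these are the same calculation.
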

\begin{proof}
By Theorem 8, $\tau(M)\geq 0$. Moreover, if $\tau(M)=0$, then we have 
\[W^{-}=\begin{pmatrix}-\frac{s}{12}&&\\&-\frac{s}{12}&\\&&\frac{s}{6}\end{pmatrix}.\]
It follows that 
\[K^{\perp}_{max}=\frac{s}{12}+\frac{\lambda_{3}^{+}+\lambda_{3}^{-}}{2}=0,\]
which is a contradiction.  
\end{proof}

For a compact K\"ahler surface with nonpositive sectional curvature and zero signature, 
we follow the method given by Zheng [21]
and get a result under a weaker hypothesis. 
\begin{Lemma}
Let $(M, g)$ be an almost-Hermitian four-manifold. 
Let $\alpha_{i}^{\pm}$ be the local frames of $\Lambda^{\pm}$ respectively, 
where $\Lambda^{\pm}$ is a bundle of self-dual(anti-self-dual) 2-forms and suppose $\alpha_{1}^{+}=\frac{\omega}{\sqrt{2}}$.
Then there exists an orthonormal coframe $\{\theta_{1}, \theta_{2}, \theta_{3}, \theta_{4}\}$ such that
\[\alpha_{1}^{\pm}=\frac{\theta_{1}\wedge\theta_{2}\pm\theta_{3}\wedge\theta_{4}}{\sqrt{2}}, \]
\[\alpha_{2}^{\pm}=\frac{\theta_{1}\wedge\theta_{3}\pm\theta_{4}\wedge\theta_{2}}{\sqrt{2}}, \]
\[\alpha_{3}^{\pm}=\frac{\theta_{1}\wedge\theta_{4}\pm\theta_{2}\wedge\theta_{3}}{\sqrt{2}}. \]
Moreover, 
\[\theta_{2}(Je_{1})=1, \theta_{4}(Je_{3})=1,\]
where $\{e_{1}, e_{3}\}$ is the dual to $\{\theta_{1}, \theta_{3}\}$.
\end{Lemma}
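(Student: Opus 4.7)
The approach is to construct a $J$-adapted orthonormal local frame and then read off all six formulas by direct computation. I would first pick any local unit vector field $e_{1}$ and set $e_{2}:=Je_{1}$. Since $J$ is $g$-orthogonal, $|e_{2}|=1$, and since $\omega$ is a $2$-form, $g(e_{2},e_{1})=\omega(e_{1},e_{1})=0$. I would then choose a local unit vector field $e_{3}$ perpendicular to $\{e_{1},e_{2}\}$ and set $e_{4}:=Je_{3}$. A brief computation using $g(JX,JY)=g(X,Y)$ and antisymmetry of $\omega$ shows $e_{4}$ is a unit vector orthogonal to each of $e_{1},e_{2},e_{3}$, so $\{e_{1},e_{2},e_{3},e_{4}\}$ is an orthonormal frame. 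Letting $\{\theta_{i}\}$ be its dual coframe and orienting $M$ so that $\theta_{1}\wedge\theta_{2}\wedge\theta_{3}\wedge\theta_{4}=dvol$ (the orientation induced by $J$), the identities $\theta_{2}(Je_{1})=\theta_{2}(e_{2})=1$ and $\theta_{4}(Je_{3})=\theta_{4}(e_{4})=1$ are automatic.

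Next I would expand $\omega$ in this coframe: since $\omega(X,Y)=g(JX,Y)$ and $Je_{1}=e_{2}$, $Je_{3}=e_{4}$, the only nonvanishing pairings $\omega(e_{i},e_{j})$ with $i<j$ are $\omega(e_{1},e_{2})=\omega(e_{3},e_{4})=1$, so
\[\omega=\theta_{1}\wedge\theta_{2}+\theta_{3}\wedge\theta_{4},\]
which is exactly $\sqrt{2}\,\alpha_{1}^{+}$.

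It remains to confirm the formulas for the other five $\alpha_{i}^{\pm}$. Using $dvol=\theta_{1}\wedge\theta_{2}\wedge\theta_{3}\wedge\theta_{4}$ one computes $*(\theta_{1}\wedge\theta_{2})=\theta_{3}\wedge\theta_{4}$, $*(\theta_{1}\wedge\theta_{3})=\theta_{4}\wedge\theta_{2}$, $*(\theta_{1}\wedge\theta_{4})=\theta_{2}\wedge\theta_{3}$, from which $*\alpha_{i}^{+}=\alpha_{i}^{+}$ and $*\alpha_{i}^{-}=-\alpha_{i}^{-}$ follow by linearity. The inner-product relation $\langle\theta_{i}\wedge\theta_{j},\theta_{k}\wedge\theta_{l}\rangle=\delta_{ik}\delta_{jl}-\delta_{il}\delta_{jk}$ together with the $1/\sqrt{2}$ normalization gives orthonormality, so the $\alpha_{i}^{\pm}$ are local frames of $\Lambda^{\pm}$.

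The only conceptual subtlety is fixing the orientation so that the $J$-adapted coframe is positively oriented; this is forced by the requirement $\alpha_{1}^{+}=\omega/\sqrt{2}\in\Lambda^{+}$. Everything else reduces to routine computation in a chosen orthonormal coframe, and I would not expect any essential obstacle.
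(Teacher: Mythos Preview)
Your argument proves a different statement from the one asserted. In the lemma the orthonormal frames $\alpha_{i}^{\pm}$ of $\Lambda^{\pm}$ are \emph{given} in advance (subject only to $\alpha_{1}^{+}=\omega/\sqrt{2}$), and one must produce a single coframe $\{\theta_{i}\}$ that realizes \emph{all six} prescribed forms simultaneously. You instead build a $J$-adapted coframe first and then \emph{define} the $\alpha_{i}^{\pm}$ by the right-hand sides, checking afterwards that they happen to be orthonormal frames. That shows only that some such frames exist; it does not show that an arbitrary prescribed pair of frames admits an adapted coframe. The distinction is exactly what is needed in the application (Theorem~9), where the $\alpha_{i}^{\pm}$ are fixed in advance as eigenvectors of $W^{\pm}$.

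The paper's proof works directly from the given data: one checks $(\alpha_{i}^{+}+\alpha_{i}^{-})\wedge(\alpha_{j}^{+}+\alpha_{j}^{-})=0$ for all $i,j$ (using $\alpha_{i}^{+}\wedge\alpha_{j}^{+}=\delta_{ij}\,dvol$, $\alpha_{i}^{-}\wedge\alpha_{j}^{-}=-\delta_{ij}\,dvol$, and $\Lambda^{+}\perp\Lambda^{-}$), so each $\alpha_{i}^{+}+\alpha_{i}^{-}$ is decomposable and any two share a common $1$-form factor; this manufactures $\theta_{1},\theta_{2},\theta_{3}$, and the Hodge star then forces $\theta_{4}$ and the remaining identities. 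Your route could be repaired by appending a transitivity step: once one $J$-adapted coframe is in hand, the stabilizer $U(2)\subset SO(4)$ of $\omega$ surjects onto $SO(2)\times SO(3)$ acting on $\mathrm{span}\{\alpha_{2}^{+},\alpha_{3}^{+}\}\times\Lambda^{-}$, so an orthonormal change of coframe carries your standard frames to any prescribed ones. As written, however, that step is missing.
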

\begin{proof}
We note that 
\[(\alpha_{i}^{+}+\alpha_{i}^{-})\wedge(\alpha_{j}^{+}+\alpha_{j}^{-})=0.\]
From this, it follows that here exists an orthonormal coframe $\{\theta_{1}, \theta_{2}, \theta_{3}, \theta_{4}\}$ such that 
\[\alpha_{1}^{+}+\alpha_{1}^{-}=\sqrt{2}\theta_{1}\wedge\theta_{2}\]
\[\alpha_{2}^{+}+\alpha_{2}^{-}=\sqrt{2}\theta_{1}\wedge\theta_{3},\]
for 2-forms on a four-manifold. 
Since $*\alpha_{1}^{+}=\alpha_{1}$ and $*\alpha_{1}^{-}=-\alpha_{1}$, we have 
\[\alpha_{1}^{+}-\alpha_{1}^{-}=\sqrt{2}\theta_{3}\wedge\theta_{4}\]
\[\alpha_{2}^{+}-\alpha_{2}^{-}=\sqrt{2}\theta_{4}\wedge\theta_{2}.\]
Since $\alpha_{3}^{\pm}\perp\alpha_{1}^{\pm}, \alpha_{3}^{\pm}\perp\alpha_{2}^{\pm}$ 
and $\frac{\theta_{1}\wedge\theta_{2}\pm\theta_{3}\wedge\theta_{4}}{\sqrt{2}}\perp\alpha_{1}^{\pm}, \alpha_{2}^{\pm}$, 
\[\alpha_{3}^{\pm}=\frac{\theta_{1}\wedge\theta_{4}\pm\theta_{2}\wedge\theta_{3}}{\sqrt{2}}. \]
Since 
\[\omega(X, JY)=g(X, Y),\]
we have 
\[(\theta_{1}\wedge\theta_{2}+\theta_{3}\wedge\theta_{4})(e_{1}, Je_{1})=g(e_{1}, e_{1})=1.\]
Thus, $\theta_{2}(Je_{1})=1$. 
Similarly, from
\[(\theta_{1}\wedge\theta_{2}+\theta_{3}\wedge\theta_{4})(e_{3}, Je_{3})=g(e_{3}, e_{3})=1,\]
we get $\theta_{4}(Je_{3})=1$. 
\end{proof}

\begin{Theorem}
Let $(M, g, \omega)$ be a compact K\"ahler surface with nonpositive sectional curvature.  
Then $\tau(M)\geq0$, where $\tau(M)$ is the signature of $M$. Moreover, if $\tau(M)=0$, then $(M, g)$ is locally a product metric. 
\end{Theorem}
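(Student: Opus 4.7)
The first claim is immediate from Theorem~8. Since nonpositive sectional curvature trivially gives nonpositive biorthogonal curvature (as $K^\perp(P,P^\perp)=\tfrac{1}{2}(K(P)+K(P^\perp))$), Theorem~8 yields $\tau(M)\geq 0$, and in the equality case $\tau(M)=0$ the pointwise identity
\[
W^- \;=\; \mathrm{diag}\left(-\tfrac{s}{12},\,-\tfrac{s}{12},\,\tfrac{s}{6}\right).
\]

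Assume now $\tau(M)=0$. On the open set $\{s\neq 0\}$ the eigenvalue $\tfrac{s}{6}$ of $W^-$ is simple, so its eigenspace defines a real line bundle $L\subset\Lambda^-$, trivialized (after passing to a double cover if needed) by a unit section $\phi$. By Lemma~1, $\phi$ picks out an orthonormal frame $\{e_1, Je_1, e_3, Je_3\}$ with $\sqrt{2}\,\phi = e^1\wedge e^2 - e^3\wedge e^4$, which Lemma~5 then places in normal form with respect to the self-dual/anti-self-dual decomposition. Parallelism of $\phi$ is equivalent to parallelism of the $J$-invariant orthogonal splitting $TM = \mathrm{span}(e_1,Je_1)\oplus\mathrm{span}(e_3,Je_3)$, and de~Rham's decomposition theorem then produces the desired local product structure. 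The locus $\{s=0\}$, where $W^\pm\equiv 0$ and $K\leq 0$, is flat and hence already compatible with the conclusion.

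To prove $\nabla\phi=0$, following Zheng~[21], I plan to combine the Weitzenb\"ock identity for anti-self-dual 2-forms, which in view of $W^-(\phi)=\tfrac{s}{6}\phi$ reduces to
\[
\Delta\phi \;=\; \nabla^*\nabla\phi - 2W^-(\phi) + \tfrac{s}{3}\phi \;=\; \nabla^*\nabla\phi,
\]
with the rank-one algebraic constraint
\[
W^- + \tfrac{s}{12}\,\mathrm{Id}_{\Lambda^-} \;=\; \tfrac{s}{4}\,\phi\otimes\phi
\]
coming from Theorem~8. Differentiating this constraint, pairing with the second Bianchi identity for $\delta W^-$, and expanding in the Lemma~5 frame should yield linear relations among the components of $\nabla\phi$ and $\nabla W^-$. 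The nonpositivity $K(P)\leq 0$ for \emph{every} plane $P$ (not just the equality-case planes of Theorem~8) then ought to force the connection forms of the splitting to be block-diagonal, which is $\nabla\phi = 0$.

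The main obstacle is precisely this last implication: extracting the differential statement ``$L$ is parallel'' from the pointwise rank-one algebraic identity for $W^- + \tfrac{s}{12}\mathrm{Id}$. A pointwise eigenvector of a smoothly varying symmetric operator need not yield a parallel section of its eigenbundle even when the eigenvalue is globally simple, so one must use $K\leq 0$ in every direction, together with the K\"ahler identity $\nabla J = 0$ in the Lemma~5 frame. Globalizing $\phi$ on a double cover and invoking an integrated Weitzenb\"ock argument is a plausible alternative route; either way, this is where the nonpositivity of the full sectional curvature, beyond the biorthogonal bound used in Theorem~8, is decisively used.
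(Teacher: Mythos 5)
Your reduction of the first claim to Theorem~8 is exactly what the paper does, and your overall skeleton for the equality case (isolate the simple eigenvalue $\tfrac{s}{6}$ of $W^-$, show the corresponding unit eigenform $\phi=\alpha_1^-$ is parallel, invoke de~Rham, and treat $\{s=0\}$ as flat) also matches. But the proposal has a genuine gap at precisely the point you flag as the ``main obstacle'': you never actually prove $\nabla\phi=0$, you only describe a plan (``I plan to combine\dots'', ``should yield\dots'', ``ought to force\dots''). Moreover, the Weitzenb\"ock identity $\Delta\phi=\nabla^*\nabla\phi$ that you put at the center of your strategy is inert on its own: it gives parallelism only \emph{after} you know $\Delta\phi=0$, i.e.\ that $\phi$ is harmonic, and establishing $d\alpha_1^-=0$ is exactly the hard step. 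Differentiating the rank-one identity $W^-+\tfrac{s}{12}\mathrm{Id}=\tfrac{s}{4}\phi\otimes\phi$ does not by itself produce this; as you yourself note, a pointwise eigenvector of a varying operator need not be parallel, and nothing in your sketch converts ``$K(P)\leq 0$ for every $P$'' into a concrete constraint on the connection forms.

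What the paper actually does at this point is an explicit computation you would need to reproduce. In the Lemma~5 coframe, the degenerate form of $W^\pm$ forces the curvature matrix $\Omega$ to be block-diagonal with $\Omega_2^1=R_{1212}\,\theta^1\wedge\theta^2$ and $\Omega_4^3=R_{3434}\,\theta^3\wedge\theta^4$ (after killing all mixed components $R_{1313}, R_{1213}, R_{1234}$, etc., using the K\"ahler identities). The second Bianchi identity $d\Omega=\Omega\wedge\omega-\omega\wedge\Omega$ then yields relations such as $\Omega_2^1\wedge\omega_3^2=\omega_4^1\wedge\Omega_3^4$, whose two sides lie in complementary spans of basis $3$-forms; since $s=R_{1212}+R_{3434}<0$ and both summands are nonpositive (this is where nonpositive sectional curvature enters decisively), at least one of $R_{1212},R_{3434}$ is strictly negative, and this forces $\omega_3^1=\omega_4^1=\omega_3^2=\omega_4^2=0$ pointwise. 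That vanishing gives $d\alpha_1^-=0$ directly, hence harmonicity, and only then does the Weitzenb\"ock formula together with $|\alpha_1^-|=1$ deliver $\nabla\alpha_1^-=0$. Without carrying out this block-diagonalization of $\Omega$ and the Bianchi-identity argument, your proposal does not constitute a proof of the second claim.
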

\begin{proof}
By Theorem 8, $\tau(M)\geq 0$. 
Moreover, if $\tau(M)=0$, we have
\[W^{-}=\begin{pmatrix}\frac{s}{6}&&\\&-\frac{s}{12}&\\&&-\frac{s}{12}\end{pmatrix}.\]
Let $p\in M$ and suppose the scalar curvature is negative at $p$. 
Then there exists a neighborhood $U$ of $p$ such that the scalar curvature is negative on $U$. 
Let $\alpha_{i}^{\pm}$ be the eigenvectors of $W^{\pm}$ with eigenvalues $\left(\frac{s}{6}, -\frac{s}{12}, -\frac{s}{12}\right)$. 
Since the distinct numbers of the eigenvalues is 2 when $s<0$, 
$\alpha_{i}^{\pm}$ are on smooth 2-forms on a neighborhood $V$ of $p$. 
Thus, on $U\cap V$, the scalar curvature is negative and $\alpha_{i}^{\pm}$ are orthonormal frame of $\Lambda^{\pm}$. 
Then by Lemma 5, 
there exists an orthonormal coframe $\{\theta_{1}, \theta_{2}, \theta_{3}, \theta_{4}\}$ such that 
\[\alpha_{1}^{\pm}=\frac{\theta_{1}\wedge\theta_{2}\pm\theta_{3}\wedge\theta_{4}}{\sqrt{2}}, \]
\[\alpha_{2}^{\pm}=\frac{\theta_{1}\wedge\theta_{3}\pm\theta_{4}\wedge\theta_{2}}{\sqrt{2}}, \]
\[\alpha_{3}^{\pm}=\frac{\theta_{1}\wedge\theta_{4}\pm\theta_{2}\wedge\theta_{3}}{\sqrt{2}} \]
and $\{e_{1}, Je_{1}, e_{3}, Je_{3}\}$ is the dual frame. 
Then we have 

\[ 
\mathfrak{R}=
\LARGE
\begin{pmatrix}

 \begin{array}{c|c}
\scriptscriptstyle{\tilde{\mathfrak{R}}}& \scriptscriptstyle{}\\
 \hline
 
 \hspace{10pt}
 
 \scriptscriptstyle{}& \scriptscriptstyle{\tilde{\mathfrak{R}}}\\
 \end{array}
 \end{pmatrix}
 \]
where 
\[\tilde{\mathfrak{R}}=\begin{pmatrix}\frac{s}{4}&&\\&0&\\&&0\end{pmatrix}.\]
Let $e_{2}:=Je_{1}$ and $e_{4}:=Je_{3}$.

From 
\[\mathfrak{R}(e_{1}\wedge e_{2}+e_{3}\wedge e_{4}, e_{1}\wedge e_{2}+e_{3}\wedge e_{4})=\frac{s}{2},\]
\[\mathfrak{R}(e_{1}\wedge e_{3}+e_{4}\wedge e_{2}, e_{1}\wedge e_{3}+e_{4}\wedge e_{2})=0,\]
\[\mathfrak{R}(e_{1}\wedge e_{4}+e_{2}\wedge e_{3}, e_{1}\wedge e_{4}+e_{2}\wedge e_{3})=0,\]
and
\[\mathfrak{R}(e_{1}\wedge e_{2}-e_{3}\wedge e_{4}, e_{1}\wedge e_{2}-e_{3}\wedge e_{4})=\frac{s}{2},\]
\[\mathfrak{R}(e_{1}\wedge e_{3}-e_{4}\wedge e_{2}, e_{1}\wedge e_{3}-e_{4}\wedge e_{2})=0,\]
\[\mathfrak{R}(e_{1}\wedge e_{4}-e_{2}\wedge e_{3}, e_{1}\wedge e_{4}-e_{2}\wedge e_{3})=0,\]
we get $R_{1234}=0$, $R_{1313}+R_{4242}=0$, $R_{1342}=0$, $R_{1414}+R_{2323}=0$, $R_{1423}=0$. 
From the K\"ahler identity, 
\[R_{ij13}=R_{ij24}, \hspace{5pt} R_{ij14}=-R_{ij23},\]
we have 
\[R_{1313}=R_{2424}=R_{1324}=R_{1414}=R_{2323}=R_{1423}=0.\]

From 
\[\mathfrak{R}(e_{1}\wedge e_{2}-e_{3}\wedge e_{4}, e_{1}\wedge e_{3}-e_{4}\wedge e_{2})=0,\]
\[\mathfrak{R}(e_{1}\wedge e_{2}-e_{3}\wedge e_{4}, e_{1}\wedge e_{4}- e_{2}\wedge e_{3})=0,\]
\[\mathfrak{R}(e_{1}\wedge e_{3}- e_{4}\wedge e_{2}, e_{1}\wedge e_{4}-e_{2}\wedge e_{3})=0,\]
and K\"ahler identity, we get
\[R_{1213}-R_{3413}=0, \hspace{5pt} R_{1214}-R_{3414}=0,  \hspace{5pt} R_{1314}-R_{4214}=0.\]

From 
\[\mathfrak{R}(e_{1}\wedge e_{2}+e_{3}\wedge e_{4}, e_{1}\wedge e_{3}-e_{4}\wedge e_{2})=0,\]
\[\mathfrak{R}(e_{1}\wedge e_{2}+e_{3}\wedge e_{4}, e_{1}\wedge e_{4}- e_{2}\wedge e_{3})=0,\]
\[\mathfrak{R}(e_{1}\wedge e_{3}+e_{4}\wedge e_{2}, e_{1}\wedge e_{4}-e_{2}\wedge e_{3})=0,\]
and K\"ahler identity, we get
\[R_{1213}+R_{3413}=0, \hspace{5pt} R_{1214}+R_{3414}=0,  \hspace{5pt} R_{1314}+R_{4214}=0.\]
Therefore, we have 
\[R_{1213}=R_{3413}=R_{1214}=R_{3414}=R_{1314}=R_{4214}=0.\]
Let 
\[\Omega=\begin{pmatrix}\Omega_{1}^{1}&\Omega_{2}^{1}&\Omega_{3}^{1}&\Omega_{4}^{1}\\
\Omega_{1}^{2}&\Omega_{2}^{2}&\Omega_{3}^{2}&\Omega_{4}^{2}\\
\Omega_{1}^{3}&\Omega_{2}^{3}&\Omega_{3}^{3}&\Omega_{4}^{3}\\
\Omega_{1}^{4}&\Omega_{2}^{4}&\Omega_{3}^{4}&\Omega_{4}^{4}\\
\end{pmatrix},\]
where 2-forms $\Omega^{i}_{j}$ are defined by 
\[\Omega_{j}^{i}=\sum_{k<l}R_{ijkl}\theta^{k}\wedge\theta^{l}.\]
Then we have 
\[\Omega_{1}^{3}=R_{3112}\theta^{1}\wedge\theta^{2}+R_{3113}\theta^{1}\wedge\theta^{3}+R_{3114}\theta^{1}\wedge\theta^{4}\]
\[+R_{3123}\theta^{2}\wedge\theta^{3}+R_{3124}\theta^{2}\wedge\theta^{4}+R_{3134}\theta^{3}\wedge\theta^{4},\]
Since 
\[R_{3112}=R_{3113}=R_{3114}=R_{3124}=R_{3413}=0, R_{3123}=-R_{3114}=0,\]
$\Omega_{1}^{3}=0$. Similarly, we have $\Omega_{2}^{3}=\Omega_{1}^{4}=\Omega_{2}^{4}=0$. 
Since $\Omega_{i}^{j}=-\Omega_{j}^{i}$, it follows that 
\[\Omega=\begin{pmatrix}&\Omega_{2}^{1}&&\\
-\Omega_{2}^{1}&&&\\
&&&-\Omega_{3}^{4}\\
&&\Omega_{3}^{4}&\\
\end{pmatrix}.\]
Let $\omega_{j}^{i}$ be the connection 1-forms such that 
\[\omega=\begin{pmatrix}\omega_{1}^{1}&\omega_{2}^{1}&\omega_{3}^{1}&\omega_{4}^{1}\\
\omega_{1}^{2}&\omega_{2}^{2}&\omega_{3}^{2}&\omega_{4}^{2}\\
\omega_{1}^{3}&\omega_{2}^{3}&\omega_{3}^{3}&\omega_{4}^{3}\\
\omega_{1}^{4}&\omega_{2}^{4}&\omega_{3}^{4}&\omega_{4}^{4}\\
\end{pmatrix}.\]
We note that $\omega_{i}^{j}=-\omega_{j}^{i}$. Then we have 
\[d\theta^{i}=-\sum_{k}\omega^{i}_{k}\wedge\theta^{k}.\]
By the second Bianchi identity, we have 
\[d\Omega=\Omega\wedge\omega-\omega\wedge\Omega.\]
From this, it follows that 
\[\Omega_{2}^{1}\wedge\omega_{3}^{2}=\omega_{4}^{1}\wedge\Omega_{3}^{4}\]
\[\Omega_{2}^{1}\wedge\omega_{4}^{2}=\omega_{3}^{1}\wedge\Omega_{4}^{3}\]
\[\Omega_{1}^{2}\wedge\omega_{3}^{1}=\omega_{4}^{2}\wedge\Omega_{3}^{4}\]
\[\Omega_{1}^{2}\wedge\omega_{4}^{1}=\omega_{3}^{2}\wedge\Omega_{4}^{3}\]

Since $s=R_{1212}+R_{3434}$, $R_{1212}(p)$ or $R_{3434}(p)$ is less than $0$. 
Suppose $R_{1212}(p)<0$. 
Since $\Omega_{2}^{1}=R_{1212}\theta^{1}\wedge\theta^{2}$ and 
 $\Omega_{4}^{3}=R_{3434}\theta^{3}\wedge\theta^{4}$, 
 $\omega_{3}^{2}=0$ at $p$ since $\Omega_{2}^{1}\wedge\omega_{3}^{2}=\omega_{4}^{1}\wedge\Omega_{3}^{4}$.
Similarly, we get $\omega_{4}^{2}=\omega_{3}^{1}=\omega_{4}^{1}=0$ at $p$. 
If $R_{3434}(p)<0$, we also get $\omega_{3}^{2}=\omega_{4}^{2}=\omega_{3}^{1}=\omega_{4}^{1}=0$ at $p$.
Using this, at $p$, it follows that 
\[\sqrt{2}d\alpha_{1}^{-}=d\theta^{1}\wedge\theta^{2}-\theta^{1}\wedge d\theta^{2}-d\theta^{3}\wedge\theta^{4}+\theta^{3}\wedge d\theta^{4}\]
\[=-\omega_{3}^{1}\wedge\theta^{3}\wedge\theta^{2}-\omega_{4}^{1}\wedge\theta^{4}\wedge\theta^{2}-\omega_{3}^{2}\wedge\theta^{1}\wedge\theta^{3}-
\omega_{4}^{2}\wedge\theta^{1}\wedge\theta^{4}\]
\[+\omega_{1}^{3}\wedge\theta^{1}\wedge\theta^{4}-\omega_{2}^{3}\wedge\theta^{2}\wedge\theta^{4}+\omega_{1}^{4}\wedge\theta^{3}\wedge\theta^{1}+
\omega_{2}^{4}\wedge\theta^{3}\wedge\theta^{2}=0.\]
Thus, $\alpha_{1}^{-}$ is closed on the set where the scalar curvature is less than zero. 
Since $\alpha_{1}^{-}$ is anti-self-dual, we have 
\[\delta\alpha_{1}^{-}=-*d*\alpha_{1}^{-}=0.\]
Thus, $\alpha_{1}^{-}$ is harmonic on the set where the scalar curvature is less than zero. 
From the Weitzenb\"ock formula, we have 
\[\Delta\alpha_{1}^{-}=\nabla^{*}\nabla\alpha_{1}^{-}+\left(\frac{s}{6}-W^{-}\right)(\alpha_{1}^{-}).\]
Since $\alpha_{1}^{-}$ is an eigenvector of $W^{-}$ with the eigenvalue $\frac{s}{6}$, we get $\nabla^{*}\nabla\alpha_{1}^{-}=0$.
Since 
\[\left<\nabla^{*}\nabla\alpha_{1}^{-}, \alpha_{1}^{-}\right>=\frac{1}{2}\Delta|\alpha_{1}^{-}|^{2}+\left<\nabla\alpha_{1}^{-}, \nabla\alpha_{1}^{-}\right>\]
and $|\alpha_{1}^{-}|=1$, we get $\alpha_{1}^{-}$ is parallel. 
Since $\omega$ is parallel, $g$ is a product metric on the set where the scalar curvature is less than zero by the de Rahm decomposition Theorem. 
Suppose $s=0$ at $p$. Then the sectional curvature is zero. Thus, $g$ is flat on the set where the scalar curvature is zero. 
Thus, $(M, g)$ is locally a product metric. 

\end{proof}
 The same proof shows the following. 
 \begin{Theorem}
 Let $(M, g, \omega)$ be a compact K\"ahler surface with nonpositive biorthogonal curvature and negative scalar curvature. 
 Then $\tau(M)\geq 0$, where $\tau(M)$ is the signature of $M$. 
 Moreover, if $\tau(M)=0$, then $(M, g)$ is locally a product metric. 
  \end{Theorem}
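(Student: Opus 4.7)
The plan is to follow the argument of Theorem 10, substituting the hypothesis of nonpositive biorthogonal curvature for that of nonpositive sectional curvature. Since Theorem 8 requires only nonpositive biorthogonal curvature, it applies directly and gives $\tau(M)\geq 0$, together with
\[
W^{-}=\begin{pmatrix}-\frac{s}{12}&&\\&-\frac{s}{12}&\\&&\frac{s}{6}\end{pmatrix}
\]
at every point in the case $\tau(M)=0$. The strict negativity of $s$ makes the eigenvalue $\frac{s}{6}$ of $W^{-}$ simple everywhere, so the unit anti-self-dual $2$-form $\alpha_{1}^{-}$ realizing it is globally well defined up to a sign as a smooth section of $\Lambda^{-}$. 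Likewise $\alpha_{1}^{+}:=\omega/\sqrt{2}$ is the smooth $W^{+}$-eigenvector with eigenvalue $\frac{s}{6}$.

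Next I would apply Lemma 5 at each point to get an orthonormal coframe $\{\theta^{1},\theta^{2},\theta^{3},\theta^{4}\}$ simultaneously adapted to $\alpha_{i}^{\pm}$ and to the K\"ahler structure. Evaluating $\mathfrak{R}$ on the six forms $\alpha_{i}^{\pm}$ using the eigenvalues of $W^{\pm}$, and invoking the K\"ahler identities $R_{ij13}=R_{ij24}$, $R_{ij14}=-R_{ij23}$, I would reproduce exactly the curvature-tensor reduction of Theorem 10: every $R_{ijkl}$ vanishes apart from those contributing to the relation $R_{1212}+R_{3434}=s/2$, so the curvature $2$-forms $\Omega_{j}^{i}$ vanish except possibly $\Omega_{2}^{1}$ and $\Omega_{4}^{3}$. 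The diagonal computations in this step use only that each $\alpha_{i}^{\pm}$ is a $W^{\pm}$-eigenvector, so no Einstein assumption is needed.

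Plugging this into the second Bianchi identity $d\Omega=\Omega\wedge\omega-\omega\wedge\Omega$ produces the same four wedge relations linking $\Omega_{2}^{1},\Omega_{3}^{4}$ to the connection $1$-forms $\omega_{3}^{2},\omega_{4}^{2},\omega_{3}^{1},\omega_{4}^{1}$ that are used in Theorem 10. Because $s<0$ everywhere by hypothesis, the identity $R_{1212}+R_{3434}=s/2$ forces at least one of $R_{1212}(p),R_{3434}(p)$ to be nonzero at every $p\in M$, and this nondegeneracy is precisely what is needed to conclude $\omega_{3}^{2}=\omega_{4}^{2}=\omega_{3}^{1}=\omega_{4}^{1}=0$ at $p$. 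The computation $\sqrt{2}\,d\alpha_{1}^{-}=0$ then runs verbatim, pointwise and hence globally on $M$, and anti-self-duality upgrades this to $\alpha_{1}^{-}$ being harmonic.

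Finally, I would apply the Weitzenb\"ock formula
\[
\Delta\alpha_{1}^{-}=\nabla^{*}\nabla\alpha_{1}^{-}+\left(\frac{s}{6}-W^{-}\right)(\alpha_{1}^{-})
\]
together with $W^{-}\alpha_{1}^{-}=\frac{s}{6}\alpha_{1}^{-}$ to deduce $\nabla^{*}\nabla\alpha_{1}^{-}=0$; pairing with $\alpha_{1}^{-}$ and using $|\alpha_{1}^{-}|\equiv 1$ then forces $\nabla\alpha_{1}^{-}=0$. Since $\omega$ is also parallel, the holonomy representation splits and the de Rham decomposition theorem exhibits $(M,g)$ as a local Riemannian product. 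The principal simplification compared with Theorem 10 is that negativity of $s$ throughout $M$ renders the separate flat-locus argument on $\{s=0\}$ unnecessary; the main thing to verify carefully is that the pointwise curvature-tensor reduction in the middle step really does go through using only $W^{\pm}$-eigenvector data, without any hidden appeal to Einstein or to a stronger curvature sign beyond what Theorem 8 already provides.
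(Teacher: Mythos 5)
Your overall plan is the paper's own plan --- its proof of this theorem is literally the remark that ``the same proof'' as Theorem 10 applies --- and your observations that Theorem 8 needs only nonpositive biorthogonal curvature, that $s<0$ keeps the eigenvalue $\tfrac{s}{6}$ of $W^{-}$ simple, and that the flat-locus step on $\{s=0\}$ becomes vacuous are all correct. But the step you flag as ``the main thing to verify'' is a genuine gap, and it does not close in the way you suggest. The relations $R_{1213}-R_{3413}=0$, $R_{1214}-R_{3414}=0$, $R_{1314}-R_{4214}=0$ do come from eigenvector data, namely from $\mathfrak{R}(\alpha_{i}^{-},\alpha_{j}^{-})=0$ for $i\neq j$. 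To make these components vanish individually, however, the proof of Theorem 10 also uses the evaluations $\mathfrak{R}(\alpha_{i}^{+},\alpha_{j}^{-})=0$, i.e.\ the vanishing of the off-diagonal ($\mathring{ric}$) block of the curvature operator in this frame. That is not $W^{\pm}$-eigenvector data at all: for instance $R_{1213}+R_{3413}$ is, up to a constant, the component of the primitive Ricci form $\rho-\tfrac{s}{4}\omega$ along $\alpha_{2}^{-}$, and a non-Einstein K\"ahler surface has no a priori reason to have its primitive Ricci form proportional to $\alpha_{1}^{-}$.

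In the setting of Theorem 10 this can be repaired from the stronger sign hypothesis: the sectional curvature is nonpositive and the diagonal computations give $K(e_{1},e_{3})=K(e_{2},e_{4})=0$, so each such plane is an interior maximum of $K$ on the Grassmannian and every first variation vanishes separately; rotating $e_{3}$ toward $e_{2}$ gives $R_{1213}=0$ directly, and the other mixed components vanish similarly. Under your hypotheses only $K^{\perp}=\tfrac{1}{2}(K(P)+K(P^{\perp}))$ is known to be nonpositive, so the first-variation argument at $\mathrm{span}(e_{1},e_{3})$ only annihilates the sum of the two variations; for the rotation of $e_{3}$ toward $e_{2}$ this yields $R_{1213}-R_{3424}=0$, which by the K\"ahler identity $R_{3424}=R_{3413}$ is exactly the relation you already have from the $\Lambda^{-}\times\Lambda^{-}$ evaluations, not the complementary relation $R_{1213}+R_{3413}=0$ that you need. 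So the passage from nonpositive sectional to nonpositive biorthogonal curvature is precisely where the argument is incomplete, and some further input (a second-variation argument, or some other control of the primitive Ricci form) is required before $\Omega_{1}^{3}=\Omega_{2}^{3}=\Omega_{1}^{4}=\Omega_{2}^{4}=0$, and hence the parallelism of $\alpha_{1}^{-}$, can be asserted.
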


\vspace{20pt}

\section{\large\textbf{Harmonic self-dual Weyl curvature with constant length}}\label{S:Intro}
Wu [19] and LeBrun classified compact riemannian four-manifolds with harmonic self-dual Weyl curvature
and $det W^{+}>0$. 
\begin{Theorem}
(LeBrun [15]) Let $(M, g)$ be a compact oriented four-manifold with $\delta W^{+}=0$. 
Let $\lambda_{i}$ be the eigenvalues of $W^{+}$ such that $\lambda_{1}\leq\lambda_{2}\leq\lambda_{3}$. 
Suppose $det W^{+}>0$. 
Then 
\begin{itemize}
\item $b_{+}(M)=1$ and $(M, g)$ is conformal to a K\"ahler metric with positive scalar curvature; or
\item $b_{+}(M)=0$ and the double cover of $(M, g)$ with the pull-back metric is conformal to a K\"ahler metic with positive scalar curvature. 

\end{itemize} 
\end{Theorem}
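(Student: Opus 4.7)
The plan is to follow the Derdzi\'nski--LeBrun strategy: use the harmonicity of $W^{+}$ together with the simplicity of its largest eigenvalue to produce a K\"ahler metric in the given conformal class, and then distinguish the two global cases by whether the associated almost-complex structure descends to $M$ itself or only to its orientation double cover.

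First, the algebraic set-up. Since $W^{+}$ is trace-free and $\det W^{+}>0$, the ordered eigenvalues satisfy $\lambda_{1}\le\lambda_{2}<0<\lambda_{3}$, so $\lambda_{3}$ is a simple eigenvalue at every point. The corresponding eigenline is a smooth real line sub-bundle $L\subset\Lambda^{+}$; locally, a unit section $\omega\in L$ with $|\omega|=\sqrt{2}$ determines an almost-complex structure $J$ compatible with $g$ and the orientation via $g(JX,Y)=\omega(X,Y)$.

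Next, the analytic core: show that $\tilde g:=\lambda_{3}^{2/3}g$ is K\"ahler (on the open set where $\omega$ is defined) with K\"ahler form $\tilde\omega=\lambda_{3}^{2/3}\omega$. Differentiating the eigenform equation $W^{+}\omega=\lambda_{3}\omega$ and invoking $\delta W^{+}=0$ yields a Lee-form identity of the shape $d\omega=-\tfrac{1}{3}\,d\log\lambda_{3}\wedge\omega$, whose Lee 1-form is globally exact, so $d\tilde\omega=0$. One must further establish the eigenvalue degeneracy $\lambda_{1}=\lambda_{2}$ in order to apply the Derdzi\'nski-type characterisation of conformally K\"ahler metrics by their self-dual Weyl tensor; this follows from a Weitzenb\"ock computation for $\omega$, the second Bianchi identity, and compactness. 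Integrability of $J$ then drops out, via Newlander--Nirenberg, from the resulting structure of $\nabla\omega$. The K\"ahler identity $\tilde\lambda_{3}=\tilde s/6$ finally converts $\lambda_{3}>0$ into $\tilde s>0$.

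Finally, the topological dichotomy. If $L$ is orientable as a real line bundle, $\omega$ is globally defined on $M$, so $\tilde g$ is a K\"ahler metric of positive scalar curvature on $M$ itself; then $[\tilde\omega]\in H^{2}(M;\mathbb R)$ has $\int_{M}\tilde\omega\wedge\tilde\omega>0$, and Yau's vanishing $h^{2,0}=0$ for K\"ahler surfaces with positive scalar curvature, combined with $b_{+}=1+2h^{2,0}$, forces $b_{+}(M)=1$. If instead $L$ is non-orientable, it is trivialised by the associated double cover $\pi:\hat M\to M$, on which the construction produces a K\"ahler metric of positive scalar curvature; writing $\sigma$ for the nontrivial deck transformation, one has $\sigma^{*}\hat\omega=-\hat\omega$, and any class $\alpha\in H^{2}(M;\mathbb R)$ with $\alpha\cdot\alpha>0$ would pull back to a $\sigma$-invariant positive-self-intersection class on $\hat M$, contradicting $b_{+}(\hat M)=1$ and the fact that $[\hat\omega]$ spans the positive cone but is $\sigma$-anti-invariant, so $b_{+}(M)=0$. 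The main obstacle is the middle paragraph: deriving both the Lee-form identity for $\omega$ and the eigenvalue degeneracy $\lambda_{1}=\lambda_{2}$ from just $\delta W^{+}=0$ and $\det W^{+}>0$ is the analytic heart of LeBrun's refinement of Derdzi\'nski's theorem, and is where compactness, via global integration by parts, truly enters.
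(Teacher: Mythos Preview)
The paper does not supply its own proof of this statement; Theorem~11 is quoted from LeBrun~[15] (with Wu~[19] also cited for the same result) and is used thereafter as a black box. Consequently there is no in-paper argument to compare your proposal against.

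That said, your outline tracks the architecture of LeBrun's original proof correctly: $\det W^{+}>0$ together with trace-freeness forces $\lambda_{1}\le\lambda_{2}<0<\lambda_{3}$ with $\lambda_{3}$ simple, so the top eigenspace is a smooth real line sub-bundle $L\subset\Lambda^{+}$; harmonicity of $W^{+}$ then yields, via a Weitzenb\"ock/Bianchi computation, both the degeneracy $\lambda_{1}=\lambda_{2}$ and closedness of the rescaled eigenform, so that $\lambda_{3}^{2/3}g$ is K\"ahler with positive scalar curvature (cf.\ Remark~1 in the present paper, which records exactly this conformal factor); and the split into $b_{+}=1$ versus $b_{+}=0$ is governed by whether $L$ is orientable. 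Two remarks: first, the Lee-form coefficient should be $-\tfrac{2}{3}$ rather than $-\tfrac{1}{3}$, since $d(\lambda_{3}^{2/3}\omega)=0$ unwinds to $d\omega=-\tfrac{2}{3}\,d\log\lambda_{3}\wedge\omega$. Second, and more importantly, you yourself flag that the ``analytic core'' is not actually carried out in your proposal---deriving the Lee-form identity and the eigenvalue degeneracy from $\delta W^{+}=0$ alone is precisely the nontrivial content of~[15], and your middle paragraph remains a plan rather than a proof.
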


\newtheorem{Remark}{Remark}
\begin{Remark}
In Theorem 11, a weaker condition can be assumed such that 
$\lambda_{3}>0$ and $\lambda_{2}\leq 0$ instead of $det W^{+}> 0$ [15]. 
The same argument shows that $(M, g)$ is conformal to a K\"ahler metric with conformal factor $f^{-2}$ where $f:=(\lambda_{3})^{-1/3}$. 
Since 
\[W^{+}=\begin{pmatrix}-\frac{s}{12}&&\\&-\frac{s}{12}&\\&&\frac{s}{6}\end{pmatrix}\]
for a K\"ahler metric, 
we have 
\[W^{+}_{g}=f^{-2}\begin{pmatrix}-\frac{s}{12}&&\\&-\frac{s}{12}&\\&&\frac{s}{6}\end{pmatrix}.\]
Since $\lambda_{3}>0$ and $\lambda_{2}\leq 0$, it follows that $s>0$ and $\lambda_{2}<0$. 
In particular, $det W^{+}>0$. 
\end{Remark}

\begin{Remark}
We note that by changing the orientation, $W^{-}$ can be used instead of $W^{+}$ in Theorem 11. 

\end{Remark}

Using Wu [19] and LeBrun's Theorem  11, we show the following. 
Polombo showed this result by assuming $g'$ is K\"ahler-Einstein [17].
\begin{Theorem}
Let $(M, g)$ be a compact Einstein four-manifold with positive scalar curvature. 
Suppose there exists a K\"ahler metric $g'$ which is sufficiently close to $g$ in $C^{2}$-Topology. Then $(M, g)$ is K\"ahler-Einstein.
\end{Theorem}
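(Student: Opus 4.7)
The approach is to apply Theorem 11 (with Remark 1) to produce a conformally K\"ahler structure on $(M,g)$ from the Einstein condition together with the $C^2$-closeness to $g'$, and then to combine the Derdzinski-LeBrun classification of Einstein conformally K\"ahler four-manifolds with the Gursky-LeBrun inequality (Theorem 10) to force $g$ to be K\"ahler-Einstein.

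First I would use the closeness to control the spectrum of $W^{+}_{g}$. Since $g'$ is K\"ahler, $W^{+}_{g'}$ has eigenvalues $(-s'/12,-s'/12,s'/6)$; because $g$ is Einstein with $s_g>0$ and $g'$ is $C^2$-close to $g$, we have $s'>0$, hence $\lambda_3(W^{+}_{g'})>0$ and $\lambda_2(W^{+}_{g'})<0$. The $C^2$-closeness of $g$ and $g'$ yields $C^0$-closeness of their Weyl tensors and of their eigenvalues, so $\lambda_3(W^{+}_{g})>0$ and $\lambda_2(W^{+}_{g})<0$; equivalently $\det W^{+}_{g}>0$.

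Next, the Einstein condition gives $\delta W^{+}_{g}=0$ via the second Bianchi identity. Combined with the eigenvalue bound, Remark 1 yields that $(M,g)$ --- or its double cover with the pullback metric --- is conformal to a K\"ahler metric $\hat g$ with positive scalar curvature.

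It remains to upgrade ``Einstein and conformally K\"ahler'' to ``K\"ahler-Einstein''. By the Derdzinski-LeBrun classification, a compact Einstein four-manifold with positive scalar curvature that is conformally K\"ahler is either K\"ahler-Einstein, the Page metric on $\mathbb{CP}^2\#\overline{\mathbb{CP}^2}$, or the Chen-LeBrun-Weber metric on $\mathbb{CP}^2\#2\overline{\mathbb{CP}^2}$. In the latter two cases $\nabla W^{+}_{g}\not\equiv 0$, so Theorem 10 produces a strictly positive gap $\int_M(|W^{+}_{g}|^2-s_g^2/24)\,d\mu_g\ge\epsilon_0>0$. On the other hand, the K\"ahler identity $|W^{+}_{g'}|^2=s'^2/24$ holds pointwise, and $C^2$-closeness makes both $\int_M|W^{+}_{g}|^2\,d\mu_g$ and $\int_M s_g^2/24\,d\mu_g$ close to $\int_M s'^2/24\,d\mu_{g'}$ (using that $s_g$ is constant, $s'$ is $C^0$-close to $s_g$, and the volume forms are also close). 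Hence $\int_M(|W^{+}_{g}|^2-s_g^2/24)\,d\mu_g$ can be made arbitrarily small by taking $g'$ sufficiently close to $g$, contradicting the gap $\epsilon_0$ in the Page and Chen-LeBrun-Weber cases. Therefore $g$ must be K\"ahler-Einstein.

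The main obstacle is this final step: it requires verifying that the functional $g\mapsto\int_M(|W^{+}_{g}|^2-s_g^2/24)\,d\mu_g$ is $C^2$-continuous (standard, since it depends on up to two derivatives of the metric), vanishes identically on K\"ahler metrics, and is bounded strictly away from zero on the Page and Chen-LeBrun-Weber metrics, so that sufficient $C^2$-proximity of a K\"ahler $g'$ to $g$ excludes the two exceptional classes and leaves only the K\"ahler-Einstein alternative.
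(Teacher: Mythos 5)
Your proposal is correct in its main thrust and shares the paper's skeleton --- $C^{2}$-closeness to the K\"ahler metric $g'$ forces $\det W^{+}_{g}>0$, Theorem 11 then makes $(M,g)$ conformally K\"ahler (note that the double-cover alternative cannot actually occur here, since $M$ carries a K\"ahler metric and hence $b_{+}\geq 1$; it is cleaner to say this than to carry the double cover along), and the scale-invariant functional $\int_{M}\bigl(|W^{+}|^{2}-\tfrac{s^{2}}{24}\bigr)\,d\mu$ is forced to be nearly zero by continuity --- but your endgame is genuinely different from the paper's. The paper stays elementary: by the Gursky--LeBrun inequality (which is Theorem 7 in the paper, not Theorem 10) the functional is nonnegative for Einstein metrics of positive scalar curvature, so the closeness argument forces exact equality, whence $\nabla W^{+}=0$; combined with the degenerate spectrum and nowhere-vanishing of $W^{+}$ coming from conformal K\"ahlerity and $\det W^{+}>0$, Derdzi\'nski's theorem [7] then yields that $g$ itself is K\"ahler. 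You instead invoke LeBrun's full classification of compact Einstein Hermitian four-manifolds (K\"ahler--Einstein, Page, or Chen--LeBrun--Weber) and exclude the two exceptional metrics by their strictly positive Gursky--LeBrun gap. That is logically sound --- each exceptional metric is rigid up to scale, so its gap is a fixed positive scale-invariant number, and you even obtain a closeness threshold independent of $g$, which the paper's argument does not give --- but it rests on a much deeper external theorem that the paper neither cites nor needs; the paper's route, using only the equality case of Theorem 7 together with Derdzi\'nski's local result, is the more economical one.
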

\begin{proof}
Since $g'$ is sufficiently close to $g$, 
the scalar curvature of $g'$ is positive.
Since $g'$ is a K\"ahler metric with positive scalar curvature, $det W^{+}_{g'}>0$ and therefore, $det W^{+}_{g}>0$. 
Since $b_{+}\geq 1$, $(M, g)$ is conformal to a K\"ahler metric $h$ by Theorem 11. 
Compact oriented Einstein four-manifolds with positive scalar curvature and $W^{+}\not\equiv 0$ has 
\[\int_{M}|W^{+}_{g}|^{2}_{g}d\mu_{g}\geq\int_{M}\frac{s_{g}^{2}}{24}d\mu_{g},\]
with equality holds if and only if $\nabla W^{+}=0$ by Theorem 7. 
Suppose 
\[\int_{M}|W^{+}_{g}|^{2}_{g}d\mu_{g}-\int_{M}\frac{s_{g}^{2}}{24}d\mu_{g}>0.\]
Since $g'$ is sufficiently close to $g$ in $C^{2}$-Topology, 
\[\int_{M}|W^{+}_{g'}|^{2}_{g'}d\mu_{g'}-\int_{M}\frac{s_{g'}^{2}}{24}d\mu_{g'}>0,\]
which is a contradiction since $\int_{M}|W^{+}_{g'}|^{2}_{g'}d\mu_{g'}=\int_{M}\frac{s_{g'}^{2}}{24}d\mu_{g'}$
for a K\"ahler metric. 
Thus, $(M, g)$ is a compact Einstein four-manifold with positive scalar curvature such that 
$\int_{M}\frac{s^{2}}{24}d\mu=\int_{M}|W^{+}|^{2}d\mu$.
Thus, either $W^{+}=0$ or we get $\nabla W^{+}=0$. 
On the other hand, a compact anti-self-dual Einstein four-manifold with positive scalar curvature
is isometric to the Fubini-Study metric with the oppoiste orientation on $\overline{\mathbb{CP}_{2}}$ or the standard metric on $S^{4}$ up to rescaling [5], [9], [14].
Since they are not close to a K\"ahler metric, $(M, g)$ is not anti-self-dual and therefore, $\nabla W^{+}=0$. 
Since $(M, g)$ is conformal to a K\"ahler metric, $W^{+}$ has a degenerate spectrum. 
Since $det W^{+}_{g}>0$, $W^{+}_{g}$ is nowhere zero. 
Therefore, $(M, g)$ is K\"ahler  [7]. 
\end{proof}

\begin{Proposition}
Let $(M, g)$ be a compact, oriented four-manifold with nonnegative scalar curvature.
Suppose $\delta W^{+}=0$ and $|W^{+}|$ is constant. Then 
\begin{itemize}
\item $W^{+}\equiv 0$; or
\item $b_{+}(M)=1$ and $(M, g)$ is a K\"ahler metric with positive constant scalar curvature; or
\item $b_{+}(M)=0$ and the double cover of $(M, g)$ is a K\"ahler metric with positive constant scalar curvature. 
\end{itemize}

\end{Proposition}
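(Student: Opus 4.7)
The plan is to combine a Bochner-type identity for harmonic self-dual Weyl curvature with the rigidity supplied by Theorem 11 and its Remark. The starting point is the Weitzenböck formula available when $\delta W^{+}=0$,
\[
\tfrac{1}{2}\Delta |W^{+}|^{2} \;=\; |\nabla W^{+}|^{2} \,+\, \tfrac{s}{2}|W^{+}|^{2} \,-\, 18\det W^{+}.
\]
Since $|W^{+}|$ is constant the left-hand side vanishes pointwise, so the hypothesis $s\geq 0$ forces $18\det W^{+} = |\nabla W^{+}|^{2}+\tfrac{s}{2}|W^{+}|^{2} \geq 0$. If $W^{+}\equiv 0$ we are in the first alternative. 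Otherwise $|W^{+}|$ is a positive constant, hence $W^{+}$ is nowhere zero; combined with $\lambda_{1}+\lambda_{2}+\lambda_{3}=0$ and $\det W^{+}\geq 0$, a one-line algebraic check gives $\lambda_{3}>0$ and $\lambda_{2}\leq 0$ at every point.

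With those pointwise inequalities in hand I would invoke the Remark following Theorem 11. This produces a Kähler metric $\tilde{g}=f^{-2}g$ with $f=\lambda_{3}^{-1/3}$ and $s_{\tilde{g}}>0$, either on $M$ itself (when $b_{+}(M)=1$) or on the double cover (when $b_{+}(M)=0$). The key point for what follows is that on a Kähler four-manifold $W^{+}$ has eigenvalues $(-s_{\tilde{g}}/12,-s_{\tilde{g}}/12,s_{\tilde{g}}/6)$, and because a conformal change of metric scales the three eigenvalues of $W^{+}$ by a common factor, the eigenvalues of $W^{+}_{g}$ are also in the ratio $(-1,-1,2)$, i.e.\ of the shape $(-a,-a,2a)$ for some positive function $a$.

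The proposition then falls out of a single bookkeeping step. Since $|W^{+}|^{2}=6a^{2}$ is constant, $a$ is constant, so $\lambda_{3}=2a$ is constant and therefore the conformal factor $f^{-2}=\lambda_{3}^{2/3}$ is a positive constant. Hence $\tilde{g}$ is merely a constant rescaling of $g$, so $g$ is itself Kähler (on $M$ or on its double cover, as dictated by Theorem 11). The Kähler identity $|W^{+}|^{2}=s^{2}/24$, together with constancy of $|W^{+}|$ and the fact that $s_{\tilde{g}}>0$, then shows that the scalar curvature is a positive constant, and the $b_{+}$ alternative is inherited directly from Theorem 11.

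The main subtlety I expect is one of conventions: pinning down the exact coefficients in the Weitzenböck identity for $|W^{+}|^{2}$ under $\delta W^{+}=0$, and verifying that the ratios of eigenvalues of $W^{+}$ are indeed conformally invariant so that the Kähler shape $(-1,-1,2)$ transfers from $\tilde{g}$ to $g$. Both facts are standard, but they are the only nontrivial ingredients and have to match cleanly with the formula $f=\lambda_{3}^{-1/3}$ appearing in the Remark.
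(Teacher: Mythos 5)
Your proposal is correct and follows essentially the same route as the paper: the Bochner/Weitzenböck identity for $|W^{+}|^{2}$ under $\delta W^{+}=0$ forces $\det W^{+}\geq 0$ when $|W^{+}|$ is constant and $s\geq 0$, the tracelessness of $W^{+}$ then gives $\lambda_{3}>0$ and $\lambda_{2}\leq 0$ pointwise, and Theorem 11 with Remark 1 yields the conformally K\"ahler structure with degenerate spectrum $(-a,-a,2a)$, so that constancy of $|W^{+}|^{2}=6a^{2}$ makes the Derdzi\'nski conformal factor constant and $g$ (or its double cover) genuinely K\"ahler of positive constant scalar curvature. The paper compresses the last step into a citation of Derdzi\'nski's rigidity result, but the content is identical to your explicit bookkeeping.
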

\begin{proof}
Suppose $|W^{+}|\neq 0$. 
The following formula [5], 
\[\Delta|W^{+}|^{2}=-2|\nabla W^{+}|^{2}+36detW^{+}-s|W^{+}|^{2},\]
 implies $det W^{+}\geq0$.
 Since $|W^{+}|\neq 0$, $\lambda_{3}>0$. 
 Then by Theorem 11 and Remark 1, $W^{+}$ has a degenerate spectrum. 
 Since $|W^{+}|$ is constant, the eigenvalues are constant. 
  The result follows from Theorem 11 and [7]. 
 \end{proof}

\begin{Corollary}
Let $(M, g)$ be a compact, oriented four-manifold with nonnegative scalar curvature. 
Suppose $\delta W^{+}=0$ and $|W^{+}|$ is constant. 
If the scalar curvature vanishes at a point $p\in M$, then $(M, g)$ is anti-self-dual. 
\end{Corollary}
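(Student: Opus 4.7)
The plan is to deduce this directly from Proposition 3 by excluding the two K\"ahler alternatives. Since the hypotheses $\delta W^{+}=0$, $|W^{+}|$ constant, and scalar curvature nonnegative are exactly those of Proposition 3, one of the three listed conclusions must hold. I will argue that the assumption $s(p)=0$ for some $p\in M$ forces the first alternative, namely $W^{+}\equiv 0$, which is the definition of $(M,g)$ being anti-self-dual.

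First I would rule out the second alternative. There $(M,g)$ itself is K\"ahler with positive constant scalar curvature. Since $s$ is constant and positive globally, it cannot vanish at $p$, contradicting the hypothesis. Next I would rule out the third alternative in the same spirit: here the double cover $(\widetilde{M},\widetilde{g})$ with the pull-back metric is K\"ahler with positive constant scalar curvature. Because the scalar curvature is a pointwise invariant of the metric, the pull-back $\widetilde{g}$ has scalar curvature equal to $s\circ\pi$ where $\pi\colon\widetilde{M}\to M$ is the covering map. Thus $s(\widetilde{p})=s(p)=0$ for any preimage $\widetilde{p}$ of $p$, which is incompatible with $\widetilde{g}$ having positive constant scalar curvature.

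Having excluded both K\"ahler cases, only the first alternative of Proposition 3 survives, so $W^{+}\equiv 0$ on $M$. By the definition recalled in the introduction, this means $(M,g)$ is anti-self-dual, completing the argument.

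The only genuine content sits in Proposition 3 itself; the corollary is essentially a bookkeeping step, and I do not expect any obstacle beyond verifying that the scalar curvature of the pull-back metric in case three really agrees with $s\circ\pi$, which is immediate since the covering is a local isometry.
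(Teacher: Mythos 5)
Your proposal is correct and is precisely the argument the paper intends: the Corollary is stated immediately after Proposition~3 with no separate proof, because the second and third alternatives there each force the scalar curvature to be a positive constant (on $M$ or on its double cover, where the pull-back metric is a local isometry), which is incompatible with $s(p)=0$, leaving only $W^{+}\equiv 0$. Nothing is missing.
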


\begin{Proposition}
Let $(M, g)$ be a compact, oriented four-manifold with $\delta W^{+}=0$. 
Suppose the scalar curvature is nonnegative. 
If $\int_{M}det W^{+}d\mu\leq 0$, 
then $(M, g)$ is anti-self-dual. 

\end{Proposition}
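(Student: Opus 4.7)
The plan is to integrate the Weitzenb\"ock-type formula already used in the proof of Proposition 3,
\[\Delta|W^{+}|^{2}=-2|\nabla W^{+}|^{2}+36\det W^{+}-s|W^{+}|^{2},\]
over the compact manifold $M$. Since $\int_{M}\Delta|W^{+}|^{2}\,d\mu=0$, this yields the integral identity
\[2\int_{M}|\nabla W^{+}|^{2}d\mu+\int_{M}s|W^{+}|^{2}d\mu=36\int_{M}\det W^{+}d\mu.\]

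Now I would read off the signs. Under the standing hypotheses $s\geq 0$ and $\int_{M}\det W^{+}d\mu\leq 0$, the left-hand side is nonnegative and the right-hand side is nonpositive. Hence both sides vanish, and in particular each nonnegative integrand on the left vanishes pointwise. I would conclude $\nabla W^{+}\equiv 0$, $s|W^{+}|^{2}\equiv 0$, and $\int_{M}\det W^{+}d\mu=0$.

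From $\nabla W^{+}\equiv 0$ it follows that $|W^{+}|$ is constant, so either $W^{+}\equiv 0$ everywhere (which is the desired conclusion) or $|W^{+}|$ is a nonzero constant. In the latter case the identity $s|W^{+}|^{2}\equiv 0$ forces $s\equiv 0$. At this point I would invoke Proposition 3 (or equivalently Corollary 2): its hypotheses $\delta W^{+}=0$, $s\geq 0$, and $|W^{+}|$ constant are satisfied, but its three alternative conclusions are $W^{+}\equiv 0$ or a Kähler metric of \emph{positive} constant scalar curvature (possibly after passing to a double cover). Since we have $s\equiv 0$, the Kähler alternatives are excluded, so $W^{+}\equiv 0$, contradicting $|W^{+}|\ne 0$.

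I do not anticipate a serious obstacle: the whole argument is a one-line integration of the already-cited Weitzenb\"ock identity, followed by a sign analysis and a direct citation of Proposition 3 to rule out the case $|W^{+}|=\text{const}>0$ with $s\equiv 0$. The only mild subtlety is remembering to rule out that borderline case rather than immediately concluding from $\nabla W^{+}=0$; using Proposition 3 handles it cleanly.
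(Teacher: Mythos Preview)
Your proof is correct and follows essentially the same route as the paper: integrate the Weitzenb\"ock identity to force $\nabla W^{+}\equiv 0$ (hence $|W^{+}|$ constant), then rule out the case $|W^{+}|>0$. The only cosmetic difference is in the endgame: the paper observes that $\det W^{+}=0$ pointwise (from the identity with $\Delta|W^{+}|^{2}=0$ and $s|W^{+}|^{2}=0$) and then appeals directly to Theorem~11/Remark~1 to get the contradiction $\det W^{+}>0$, whereas you deduce $s\equiv 0$ and invoke Proposition~3 (equivalently Corollary~2), which itself rests on Theorem~11/Remark~1. Both finishes are the same argument in slightly different packaging.
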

\begin{proof}
From 
\[\Delta|W^{+}|^{2}=-2|\nabla W^{+}|^{2}+36det W^{+}-s|W^{+}|^{2}, \]
it follows that $\nabla W^{+}=0$ and $det W^{+}=0$.
Then
\[d\left<W^{+}, W^{+}\right>=2\left<\nabla W^{+}, W^{+}\right>=0.\]
Thus, $|W^{+}|$ is constant.
Suppose $|W^{+}|>0$. 
Then in particular, $\lambda_{3}>0$, where $\lambda_{i}$ are the eigenvalues of $W^{+}$ such that 
$\lambda_{1}\leq\lambda_{2}\leq\lambda_{3}$. 
Then by Theorem 11 and Remark 1, $det W^{+}>0$, which is a contradiction. 
Thus, $|W^{+}|=0$ and therefore, $(M, g)$ is anti-self-dual. 
\end{proof}

\begin{Corollary}
Let $(M, g, \omega)$ be a compact K\"ahler-Einstein surface with nonnegative scalar curvature. 
Suppose 
\[H_{av}-H_{min}\geq\frac{1}{2}(H_{max}-H_{min}).\]
Then  
\begin{itemize}
\item $(M, J)$ is biholomorphic to $\mathbb{CP}_{2}$ and $(M, g)$ is isometric to the Fubini-Study metric on $\mathbb{CP}_{2}$ up to rescaling; or
\item $(M, g, \omega)$ is locally flat-K\"ahler. 
\end{itemize}

\end{Corollary}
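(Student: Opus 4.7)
The plan is to translate the holomorphic sectional curvature inequality into a sign condition on $\det W^{-}$, feed that into Proposition 4 applied to the reversed orientation, and finish by the standard classification of Kähler manifolds of constant holomorphic sectional curvature.

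First I would use Lemma 3 to rewrite everything in terms of the eigenvalues $\lambda_{1}\leq\lambda_{2}\leq\lambda_{3}$ of $W^{-}$. One computes $H_{av}-H_{min}=-\lambda_{1}/2$ and $H_{max}-H_{min}=(\lambda_{3}-\lambda_{1})/2$, so the hypothesis $H_{av}-H_{min}\geq\tfrac{1}{2}(H_{max}-H_{min})$ is equivalent to $\lambda_{1}+\lambda_{3}\leq 0$, which by $\lambda_{1}+\lambda_{2}+\lambda_{3}=0$ is in turn equivalent to $\lambda_{2}\geq 0$. Hence $\lambda_{1}\leq 0\leq\lambda_{2}\leq\lambda_{3}$, so $\det W^{-}=\lambda_{1}\lambda_{2}\lambda_{3}\leq 0$ pointwise, and in particular $\int_{M}\det W^{-}\,d\mu\leq 0$.

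Next I would apply Proposition 4 to the oppositely oriented manifold $\overline{M}$, invoking Remark 2 to swap the roles of $W^{+}$ and $W^{-}$. All the hypotheses transfer: the scalar curvature is unchanged and nonnegative; since $(M,g)$ is Einstein the full Weyl tensor is harmonic, so $\delta W^{-}=0$, which is the self-dual harmonicity condition for $\overline{M}$; and the integral inequality from the first step is exactly what Proposition 4 requires of $\det W^{+}_{\overline{M}}$. The conclusion that $\overline{M}$ is anti-self-dual then translates back to $W^{-}\equiv 0$ on $(M,g)$. Given that, Lemma 3 forces $H_{max}=H_{min}=s/6$, so $(M,g,\omega)$ has constant holomorphic sectional curvature equal to $s/6\geq 0$. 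If $s=0$ the metric is flat, giving the second alternative of the corollary; if $s>0$ the classification of compact Kähler surfaces of positive constant holomorphic sectional curvature identifies $(M,J)$ with $\mathbb{CP}_{2}$ and $g$ with a rescaling of the Fubini--Study metric.

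The only step I expect to require any care is the orientation reversal: one must confirm that Proposition 4 is genuinely orientation-symmetric in the sense suggested by Remark 2, i.e.\ that $\delta W^{-}=0$ on $(M,g)$ is the same as harmonicity of the self-dual Weyl tensor of $(\overline{M},g)$, and that anti-self-duality of $\overline{M}$ is literally the vanishing of $W^{-}_{M}$. Once this bookkeeping is in place, the rest of the proof is just the algebraic reduction through Lemma 3 together with the well-known classification of constant-holomorphic-sectional-curvature Kähler surfaces.
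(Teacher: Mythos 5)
Your proposal is correct and follows essentially the same route as the paper: Lemma 3 converts the hypothesis into $\lambda_{2}\geq 0$, hence $\det W^{-}\leq 0$, and Proposition 4 applied with the reversed orientation yields $W^{-}\equiv 0$, after which constancy of the holomorphic sectional curvature gives the two alternatives. The paper's own proof is just a terser version of this same argument.
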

\begin{proof}
From $\lambda_{2}\geq 0$, it follows $det W^{-}\leq 0$. By Proposition 4, $(M, g)$ is self-dual. 
\end{proof}

\vspace{20pt}

\section{\large\textbf{Pinched self-dual Weyl curvature and nonpositive scalar curvature}}\label{S:Intro}
In this section, we show compact riemannian four-manifolds with harmonic self-dual Weyl curvature and nonpositive scalar curvature
is anti-self-dual if $det W^{+}\geq 0$.
\begin{Theorem}
(Polombo) Let $(M, g)$ be a compact oriented riemannian four-manifold with $\delta W^{+}=0$. 
Suppose the scalar curvature is nonpositive and 
\[-8\left(1-\frac{\sqrt{3}}{2}\right)\lambda_{1}\leq \lambda_{3}\leq -2\lambda_{1},\]
where $\lambda_{i}$ be the eigenvalues of $W^{+}$ such that $\lambda_{1}\leq\lambda_{2}\leq\lambda_{3}$. 
Then $(M, g)$ is anti-self-dual. 
\end{Theorem}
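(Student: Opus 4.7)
The plan is to exploit the Weitzenb\"ock-type identity
\begin{equation*}
\Delta|W^{+}|^{2}=-2|\nabla W^{+}|^{2}+36\det W^{+}-s|W^{+}|^{2},
\end{equation*}
which was already recorded in the proof of Proposition 3 and holds whenever $\delta W^{+}=0$. The first step is to extract from the pinching the pointwise sign $\det W^{+}\ge 0$: as already noted in the introduction, $-8(1-\tfrac{\sqrt{3}}{2})\lambda_{1}\le\lambda_{3}$ forces $\lambda_{1}+\lambda_{3}\ge 0$, hence $\lambda_{2}=-(\lambda_{1}+\lambda_{3})\le 0$, and together with $\lambda_{1}\le 0\le\lambda_{3}$ this gives $\det W^{+}=\lambda_{1}\lambda_{2}\lambda_{3}\ge 0$ everywhere on $M$.

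Next I would convert the identity into a differential inequality for $u:=|W^{+}|^{p}$ on the open set $\{|W^{+}|>0\}$ by invoking the refined Kato inequality $|\nabla W^{+}|^{2}\ge \kappa\,|\nabla|W^{+}||^{2}$ with the sharp constant $\kappa$ available for sections of $S^{2}_{0}(\Lambda^{+})$ satisfying $\delta W^{+}=0$ (the same analytic input used by Gursky--LeBrun in Theorem 7). Choosing the exponent $p$ so that the Kato deficit is absorbed into the chain-rule terms of $\Delta u$ and then dividing through by $|W^{+}|^{p-2}$ yields an inequality of the schematic shape
\begin{equation*}
\Delta u\ \le\ \beta\,(\det W^{+})\,|W^{+}|^{p-2}\ -\ \tfrac{sp}{2}u,
\end{equation*}
in which both terms on the right have controlled sign given $s\le 0$ and $\det W^{+}\ge 0$.

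The crucial algebraic step is a pointwise estimate bounding $36\det W^{+}$ in terms of $|W^{+}|^{3}$ with a constant governed by the pinching. Setting $\mu=-\lambda_{1}\ge 0$, $\nu=\lambda_{3}\ge 0$, the pinching confines the ratio $t=\nu/\mu$ to the interval $[\,8-4\sqrt{3},\,2\,]$, and one checks that $\det W^{+}/|W^{+}|^{3}=t(t-1)/[2(1-t+t^{2})]^{3/2}$ is monotone on this range, maximized at the endpoint $t=2$ where $\lambda_{1}=\lambda_{2}$. The specific constant $8(1-\tfrac{\sqrt{3}}{2})=(\sqrt{6}-\sqrt{2})^{2}$ at the other endpoint is precisely what is needed to balance this maximal $\det W^{+}$ contribution against the refined Kato deficit when integrated together with the nonpositive scalar-curvature term. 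Stokes' theorem applied to the resulting global estimate forces $|\nabla W^{+}|\equiv 0$, $\det W^{+}\equiv 0$, and $s|W^{+}|^{2}\equiv 0$. Parallelism of $W^{+}$ makes the eigenvalues constant, and $\det W^{+}=0$ together with $\lambda_{1}+\lambda_{3}\ge 0$ and $\lambda_{1}+\lambda_{2}+\lambda_{3}=0$ forces $W^{+}\equiv 0$.

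The main obstacle is the algebraic balancing at the heart of the third step: one must identify the correct exponent $p$, prove the tight pointwise inequality comparing $\det W^{+}$, $|W^{+}|^{3}$, and the Kato deficit $(|\nabla W^{+}|^{2}-\kappa|\nabla|W^{+}||^{2})$, and verify that the nonnegative contribution $-s|W^{+}|^{2}$ on the right-hand side of the Weitzenb\"ock identity is absorbed rather than fighting the argument. The appearance of $(\sqrt{6}-\sqrt{2})^{2}$ strongly suggests the book-keeping factors through the sharp Kato constant for $S^{2}_{0}(\Lambda^{+})$, so pinning down this constant in Polombo's setting is the real technical heart of the proof.
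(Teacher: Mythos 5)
There is a genuine gap, and the route itself is different from the paper's. First, on the comparison: the paper does not prove this statement directly (it is quoted from Polombo [17]); instead it proves the strictly stronger Theorem 15 ($\delta W^{+}=0$, $s\leq 0$, and $\lambda_{1}+\lambda_{3}\geq 0$ already force anti-self-duality), noting that the left-hand pinching inequality implies $\lambda_{1}+\lambda_{3}\geq 0$ while $\lambda_{3}\leq-2\lambda_{1}$ is automatic from $\lambda_{1}+\lambda_{2}+\lambda_{3}=0$ and the ordering. That proof works not with $|W^{+}|$ but with the single eigenvalue function $\Phi=-\lambda_{2}=\lambda_{1}+\lambda_{3}$, uses the formulas for $\Delta\lambda_{2}$ coming from $\delta W^{+}=0$, and derives a contradiction from strict superharmonicity of $\Phi^{1/3}$ on the stratum $M_{W}$ together with a delicate boundary analysis.

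The structural problem with your proposal is a sign issue that no refinement of the Kato inequality can repair. Integrating $\Delta|W^{+}|^{2}=-2|\nabla W^{+}|^{2}+36\det W^{+}-s|W^{+}|^{2}$ over the closed manifold gives
\[2\int_{M}|\nabla W^{+}|^{2}\,d\mu=36\int_{M}\det W^{+}\,d\mu-\int_{M}s|W^{+}|^{2}\,d\mu,\]
and under your hypotheses $s\leq 0$ and $\det W^{+}\geq 0$ \emph{both} sides are nonnegative, so there is no contradiction to extract; this is the opposite of the situation in Proposition 4, where $s\geq 0$ and $\int\det W^{+}\leq 0$ make all three right-hand terms nonpositive. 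The refined Kato inequality only provides a \emph{lower} bound $|\nabla W^{+}|^{2}\geq\kappa|\nabla|W^{+}||^{2}$, which enlarges the left-hand side rather than forcing the right-hand side to vanish, and it does not touch the zeroth-order term $-s|W^{+}|^{2}\geq 0$ at all. Your schematic inequality $\Delta u\leq\beta(\det W^{+})|W^{+}|^{p-2}-\tfrac{sp}{2}u$ has a nonnegative right-hand side, and integrating $\Delta u$ over a closed manifold yields $0\leq(\text{nonnegative})$, which is vacuous; to close such an argument one needs an inequality of the form $\Delta u\geq(\text{nonnegative, positive somewhere})$, which is exactly what working with $-\lambda_{2}$ buys, since the term $-\tfrac{\lambda_{2}s}{2}$ is nonpositive precisely when $\lambda_{2}\leq 0$ and $s\leq 0$. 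Finally, the constant $8(1-\tfrac{\sqrt{3}}{2})$ in Polombo's theorem arises from optimizing the quadratic form in the connection $1$-forms $a,b,c$ appearing in the eigenvalue Weitzenb\"ock formulas (the terms $2(\lambda_{i}-\lambda_{j})|\cdot|^{2}$ listed in Section 5), not from a Kato constant for $|W^{+}|$; the central pointwise inequality your argument requires is, by your own admission, not established, and as set up it would have to overcome the adverse sign of the scalar curvature term, which it cannot.
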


\begin{Theorem}
(B\"ar, Polombo) Let $(M, g)$ be a compact oriented four-manifold with $\delta W^{+}=0$.
Then the set $N=\{x\in M|\lambda_{1}=\lambda_{2}=\lambda_{3}=0\}$ is codimension $\geq 2$ in $M$ unless $(M, g)$ is anti-self-dual. 
\end{Theorem}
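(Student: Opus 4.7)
The plan is to combine unique continuation with the spinorial structure of $W^+$. Since $(M,g)$ is anti-self-dual exactly when $W^+\equiv 0$, the task reduces to showing that if $W^+\not\equiv 0$ then $N=\{W^+=0\}$ has Hausdorff codimension at least two.

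First I would convert the hypothesis $\delta W^+=0$ into a semilinear elliptic equation for $W^+$. Combined with the second Bianchi identity (which expresses $dW^+$ in terms of $ric_0$ and $ds$), the condition $\delta W^+=0$ puts $W^+$ in the kernel of a first-order elliptic operator on sections of $S^2_0(\Lambda^+)$. The Derdzinski--type Weitzenb\"ock identity then yields a formula of the form
\[
\nabla^*\nabla W^+ \;=\; \tfrac{s}{2}\,W^+ \;-\; 6\,W^+\!\circ W^+ \;+\; 2|W^+|^2\,\mathrm{id},
\]
up to signs and traceless projections. In particular $|\nabla^*\nabla W^+|\leq C(|W^+|+|W^+|^2)$ on compact sets, which fits Aronszajn's unique continuation theorem: if $W^+$ vanishes to infinite order at some point, then $W^+\equiv 0$ on $M$, and we are in the excluded anti-self-dual case.

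Assuming $W^+\not\equiv 0$, at each $p\in N$ the order of vanishing $k=k(p)\geq 1$ is finite. The leading Taylor polynomial $P_p$ is a nontrivial homogeneous polynomial on $T_pM$ valued in $S^2_0(\Lambda^+_p)$, and a standard blow-up at $p$ shows that $P_p$ solves the constant-coefficient elliptic equation obtained by freezing the principal symbol at $p$; the local structure of $N$ near $p$ agrees with that of the real-algebraic variety $\{P_p=0\}\subset T_pM$. To improve Aronszajn's codimension-one output to codimension two, I would invoke the spinorial identification $S^2_0(\Lambda^+_{\mathbb{C}})\cong S^4\mathbb{V}_+$, with $\mathbb{V}_+$ the positive half-spinor bundle defined locally (no global spin structure is needed for a local codimension estimate). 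Under this identification $\delta W^+=0$ becomes a twistor/Dirac-type equation, and B\"ar's theorem on nodal sets of solutions of Dirac-type operators gives the codimension-two bound.

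The principal obstacle is precisely the step from codimension one to codimension two. The improvement relies on the underlying complex structure of $\mathbb{V}_+$: under the spinorial translation, the leading harmonic polynomial $P_p$ becomes the real part of a polynomial that is holomorphic in suitable spinor coordinates, and the real zero set of a nontrivial holomorphic map automatically has real codimension at least two. Carrying out this translation carefully---exhibiting the correct Dirac-type operator whose kernel contains $W^+$ and verifying the hypotheses of B\"ar's nodal-set theorem---is where the technical work resides; the earlier Weitzenb\"ock and Aronszajn steps are comparatively routine.
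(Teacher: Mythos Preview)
The paper does not supply its own proof of this theorem; it is quoted as a known result and attributed to B\"ar [2] and Polombo [17], so there is nothing in the paper to compare your argument against line by line.

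That said, your outline is essentially the B\"ar route: recast $\delta W^{+}=0$ as a first-order Dirac-type system via the spinor identification $S^{2}_{0}(\Lambda^{+}_{\mathbb{C}})\cong S^{4}\mathbb{V}_{+}$, use the Weitzenb\"ock formula and Aronszajn to rule out infinite-order vanishing, and then invoke B\"ar's nodal-set theorem for generalized Dirac operators to upgrade the codimension from one to two. You correctly identify the only substantive step---checking that $W^{+}$ genuinely sits in the kernel of a Dirac-type operator in B\"ar's sense, so that the holomorphic structure of the leading term forces real codimension two. One small correction: you do not need the second Bianchi identity to produce a separate expression for ``$dW^{+}$''; the single condition $\delta W^{+}=0$ is already an overdetermined elliptic first-order system on sections of $S^{2}_{0}(\Lambda^{+})$ (in spinor language it is a twistor/Penrose-type equation), and that is what feeds directly into B\"ar's framework. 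Polombo's earlier argument reaches the same conclusion by a somewhat different analysis of the symbol, but the mechanism---complex structure on the fibre forcing codimension two---is the same.
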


Let $(M, g)$ be an oriented four-manifold with $\delta W^{+}=0$. 
Let $M_{W}$ be the set where the number of distinct eigenvalues of $W^{+}$ is locally constant. 
Then $M_{W}$ is an open dense subset [7]. 
At a point $p\in M$, let $\lambda_{i}$ be the eigenvalues of $W^{+}$ such that $\lambda_{1}\leq\lambda_{2}\leq\lambda_{3}$
and $\{\rho, \eta, \varphi\}$ is a basis of orthogonal eigenvectors of length $\sqrt{2}$ so that 
\[W^{+}=\frac{1}{2}[\lambda_{1}\rho\otimes\rho+\lambda_{2}\eta\otimes\eta+\lambda_{3}\varphi\otimes\varphi].\]
Then in a neighborhood of $p\in M_{W}$, it may be assumed that $\lambda_{i}, \rho, \eta, \varphi$ are differentiable [7]. 
Then locally in a neighborhood of $p$, we have 
\[\nabla\rho=c\otimes\eta-b\otimes\varphi\]
\[\nabla\eta=-c\otimes\rho+a\otimes\varphi\]
\[\nabla\varphi=b\otimes\rho-a\otimes\eta\]
for some 1-forms $a, b, c$.

From $\delta W^{+}=0$, locally in a neighborhood of $p\in M_{W}$, we have [7] 
\[d\lambda_{1}=(\lambda_{1}-\lambda_{2})\varphi c+(\lambda_{1}-\lambda_{3})\eta b\]
\[d\lambda_{2}=(\lambda_{2}-\lambda_{1})\varphi c+(\lambda_{2}-\lambda_{3})\rho a\]
\[d\lambda_{3}=(\lambda_{3}-\lambda_{1})\eta b+(\lambda_{3}-\lambda_{2})\rho a\]
and 
\[\Delta \lambda_{1}=2\lambda_{1}^{2}+4\lambda_{2}\lambda_{3}-\frac{\lambda_{1}s}{2}+2(\lambda_{3}-\lambda_{1})|b|^{2}+2(\lambda_{2}-\lambda_{1})|c|^{2}\]
\[\Delta \lambda_{2}=2\lambda_{2}^{2}+4\lambda_{1}\lambda_{3}-\frac{\lambda_{2}s}{2}+2(\lambda_{1}-\lambda_{2})|c|^{2}+2(\lambda_{3}-\lambda_{2})|a|^{2}\]
\[\Delta \lambda_{3}=2\lambda_{3}^{2}+4\lambda_{1}\lambda_{2}-\frac{\lambda_{3}s}{2}+2(\lambda_{1}-\lambda_{3})|b|^{2}+2(\lambda_{2}-\lambda_{3})|a|^{2},\]
where $\Delta=\delta d$. 

We note that since $N$ has codimension $\geq 2$, $N\cap M_{W}=\varnothing$ unless $W^{+}\equiv 0$. 

\begin{Lemma}
Let $(M, g)$ be a compact oriented riemannian four-manifold with harmonic self-dual Weyl curvature. 
Suppose $\lambda_{1}+\lambda_{3}\geq 0$,  where $\lambda_{i}$ are the eigenvalues of $W^{+}$ such that $\lambda_{1}\leq\lambda_{2}\leq\lambda_{3}$.
Then $\lambda_{1}+\lambda_{3}>0$ on $M-N$ unless $(M, g)$ is anti-self-dual, 
where $N$ is the nonempty zero set of $W^{+}$. 
\end{Lemma}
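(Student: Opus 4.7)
The plan is to argue by contradiction using the Derdzinski identities for $d\lambda_{i}$ and $\Delta\lambda_{i}$ recorded immediately before the Lemma, together with the paper's Hodge-Laplacian sign convention $\Delta=\delta d$, under which a smooth function satisfies $\Delta f\geq 0$ at an interior maximum.

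Suppose there is $p\in M\setminus N$ with $\lambda_{1}(p)+\lambda_{3}(p)=0$. Since $\lambda_{1}+\lambda_{2}+\lambda_{3}=0$, this gives $\lambda_{2}(p)=0$, while $W^{+}(p)\neq 0$ forces $\lambda_{3}(p)=-\lambda_{1}(p)=:\mu>0$. The three eigenvalues $-\mu,0,\mu$ of $W^{+}$ are therefore pairwise distinct at $p$, so $p\in M_{W}$, and the eigenvalues together with a smooth orthonormal eigen-frame $\{\rho,\eta,\varphi\}\subset\Lambda^{+}$ of length $\sqrt 2$ extend smoothly to a neighborhood $U$ of $p$. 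The global assumption $\lambda_{1}+\lambda_{3}\geq 0$ is equivalent to $\lambda_{2}\leq 0$, so the smooth function $\lambda_{2}$ attains its maximum value $0$ at the interior point $p$ of $U$, whence $d\lambda_{2}(p)=0$ and $\Delta\lambda_{2}(p)\geq 0$.

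Feeding $d\lambda_{2}(p)=0$ into the identity
\[d\lambda_{2}=(\lambda_{2}-\lambda_{1})\,\varphi c+(\lambda_{2}-\lambda_{3})\,\rho a\]
gives $\mu(\varphi c-\rho a)=0$ at $p$, i.e.\ $\varphi(c)=\rho(a)$ as cotangent vectors at $p$. The eigen-2-forms $\varphi,\rho\in\Lambda^{+}$ of length $\sqrt 2$ act on $T^{*}_{p}M$ as orthogonal almost-complex structures, so taking norms yields $|a(p)|=|c(p)|$. Substituting $\lambda_{1}=-\mu,\lambda_{2}=0,\lambda_{3}=\mu$ and $|a|^{2}=|c|^{2}$ into
\[\Delta\lambda_{2}=2\lambda_{2}^{2}+4\lambda_{1}\lambda_{3}-\tfrac{\lambda_{2}s}{2}+2(\lambda_{1}-\lambda_{2})|c|^{2}+2(\lambda_{3}-\lambda_{2})|a|^{2}\]
makes every term other than $4\lambda_{1}\lambda_{3}=-4\mu^{2}$ vanish (the $|a|^{2}$ and $|c|^{2}$ contributions cancel thanks to $|a|=|c|$), so $\Delta\lambda_{2}(p)=-4\mu^{2}<0$, contradicting $\Delta\lambda_{2}(p)\geq 0$. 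Hence no such $p$ exists.

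The only step that needs real care is the identification $|a(p)|=|c(p)|$ extracted from $d\lambda_{2}(p)=0$: one must interpret the shorthand $\varphi c$ and $\rho a$ in the Derdzinski identity as the actions of the eigen-2-forms on $T^{*}_{p}M$, which are isometries because of the common normalization $|\rho|=|\varphi|=\sqrt 2$. Once this pointwise identity is in place, the $|a|^{2}$ and $|c|^{2}$ terms in $\Delta\lambda_{2}(p)$ cancel exactly, leaving the unambiguously negative $4\lambda_{1}\lambda_{3}=-4\mu^{2}$, which is incompatible with the Hodge-Laplacian maximum principle and therefore rules out any interior zero of $\lambda_{1}+\lambda_{3}$ outside $N$ unless $W^{+}\equiv 0$, i.e.\ $(M,g)$ is anti-self-dual.
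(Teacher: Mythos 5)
Your proof is correct and follows essentially the same route as the paper: apply the maximum principle to $\lambda_{2}\leq 0$ at a point where $\lambda_{1}+\lambda_{3}=0$, use the Derdzi\'nski identity for $d\lambda_{2}$ to get $|a|=|c|$, and substitute into the formula for $\Delta\lambda_{2}$ to force $4\lambda_{1}\lambda_{3}\geq 0$, a contradiction off the zero set $N$. The only (harmless) difference is that you dispose of the points outside $M_{W}$ at the outset, by noting that three distinct eigenvalues at $p$ already place $p$ in $M_{W}$, whereas the paper treats $M-(M_{W}\cup N)$ in a separate second step.
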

\begin{proof}
Let $M_{W}$ be the set where the number of distinct eigenvalues of $W^{+}$ is locally constant. 
Suppose $\lambda_{2}=0$ at $p\in M_{W}$ and $W^{+}\not\equiv 0$. 
Since $\lambda_{2}\leq 0$, 
$\lambda_{2}$ has a local maximum $0$ at $p$. 
Then $\nabla\lambda_{2}=0$ at $p$ and $\Delta\lambda_{2}\geq 0$ at $p$. 
Since $p\in M_{W}$, using the formula for $\Delta\lambda_{2}$ and $\lambda_{2}=0$ at $p$, it follows that
\[\Delta\lambda_{2}=4\lambda_{1}\lambda_{3}+2\lambda_{1}|c|^{2}+2\lambda_{3}|a|^{2}\geq 0.\]
We have 
\[0=d\lambda_{2}=-\lambda_{1}\varphi c-\lambda_{3}\rho a\]
at $p$ and therefore 
\[\lambda_{1}^{2}|c|^{2}=\lambda_{3}^{2}|a|^{2}\]
at $p$.
We note that $|\rho a|=|a|, |\eta b|=|b|, |\varphi c|=|c|$.
Since $\lambda_{1}^{2}=\lambda_{3}^{2}$ at $p$, we have $|c|=|a|$ at $p$. 
Since $\lambda_{1}+\lambda_{3}=0$ at $p$, it follows that 
\[\Delta\lambda_{2}=4\lambda_{1}\lambda_{3}-2\lambda_{3}|c|^{2}+2\lambda_{3}|a|^{2}=4\lambda_{1}\lambda_{3}\leq 0.\] 
Thus, $\lambda_{1}\lambda_{3}=0$ at $p$, and therefore, $\lambda_{1}=\lambda_{2}=\lambda_{3}=0$ at $p$, which implies $p\in N$. 
On the other hand, $N\cap M_{W}=\varnothing$ unless $W^{+}\equiv 0$. 
Thus, $\lambda_{2}\neq0$ on $M_{W}$ and therefore, $\lambda_{1}+\lambda_{3}>0$ on $M_{W}$. 

Suppose $p\in M-(M_{W}\cup N)$ and $\lambda_{2}=0$ at $p$. 
If $W^{+}$ has three distinct eigenvalues at $p$, 
then near $p$, $W^{+}$ also has three distinct eigenvalues. 
Thus, $p\in M_{W}$. 
Therefore, if $p\in M-(M_{W}\cup N)$, then either $2\lambda_{1}+\lambda_{3}=$ or $\lambda_{1}+2\lambda_{3}=0$ at $p$. 
Since $\lambda_{1}+\lambda_{3}=0$ at $p$, we get $\lambda_{1}=\lambda_{2}=\lambda_{3}=0$ at $p$, 
which is a contradiction. 
Thus, if if $p\in M-(M_{W}\cup N)$, then $\lambda_{2}<0$ at $p$. 
\end{proof}

\begin{Corollary}
Let $(M, g)$ be a compact oriented riemannian four-manifold with harmonic self-dual Weyl curvature. 
Suppose $det W^{+}=0$. Then $(M, g)$ is anti-self-dual. 
\end{Corollary}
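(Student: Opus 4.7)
The plan is to reduce the corollary to a direct application of the preceding Lemma by first extracting the rigid pointwise structure forced on $W^+$ by the combination of $\det W^+ = 0$ with the identity $\lambda_1 + \lambda_2 + \lambda_3 = 0$. Since $\lambda_1\lambda_2\lambda_3 = 0$ at every point $p \in M$, at least one eigenvalue vanishes there. I would then do the pointwise case analysis using the ordering $\lambda_1 \le \lambda_2 \le \lambda_3$ and the trace-free identity: if $\lambda_3 = 0$ then $\lambda_1 \le \lambda_2 \le 0$ and $\lambda_1 + \lambda_2 = 0$ force $\lambda_1 = \lambda_2 = \lambda_3 = 0$; symmetrically if $\lambda_1 = 0$; and if neither $\lambda_1$ nor $\lambda_3$ vanishes, then the vanishing eigenvalue must be $\lambda_2$, whence $\lambda_1 = -\lambda_3 < 0$. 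In every case $\lambda_2 = 0$ and $\lambda_1 + \lambda_3 = 0$ hold at $p$.

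Having established that $\lambda_1 + \lambda_3 \equiv 0$ on all of $M$, the pinching hypothesis $\lambda_1 + \lambda_3 \ge 0$ of the preceding Lemma is satisfied trivially. I would then proceed by contradiction: assume $(M,g)$ is not anti-self-dual, so $W^+ \not\equiv 0$, and hence the set $M - N = \{p \in M : W^+(p) \neq 0\}$ is a nonempty open subset. The Lemma concludes that $\lambda_1 + \lambda_3 > 0$ on $M - N$, yet the pointwise analysis above gives $\lambda_1 + \lambda_3 = 0$ everywhere on $M$, in particular on the nonempty set $M - N$. This is the desired contradiction, and it forces $W^+ \equiv 0$, i.e.\ $(M,g)$ is anti-self-dual.

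I do not anticipate any serious obstacle: all of the analytic content (the use of $\delta W^+ = 0$, the formulas for $d\lambda_i$ and $\Delta\lambda_i$ on $M_W$, and the B\"ar--Polombo fact that $N \cap M_W = \varnothing$ unless $W^+ \equiv 0$) has already been absorbed into the preceding Lemma. The only thing to verify here is the purely algebraic pointwise claim that $\det W^+ = 0$ together with tracelessness pins the spectrum of $W^+$ down to $\{\lambda_1, 0, -\lambda_1\}$ away from $N$, after which the result drops out of the Lemma mechanically.
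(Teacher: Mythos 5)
Your proof is correct and takes essentially the same route as the paper: both arguments reduce the corollary to Lemma 6 together with the trace-free algebra $\lambda_1+\lambda_2+\lambda_3=0$, which forces $\lambda_2=0$ wherever $\det W^{+}=0$. If anything, your version is slightly more careful, since you explicitly verify the hypothesis $\lambda_1+\lambda_3\geq 0$ of Lemma 6 before invoking it, whereas the paper's proof uses this implicitly.
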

\begin{proof}
Suppose $(M, g)$ is not anti-self-dual. Then by Lemma 6, $\lambda_{2}<0$ on $M-N$.
If $\lambda_{1}=0$ or $\lambda_{3}=0$ at $p\in M-N$, then $\lambda_{2}=0$ at $p\in M-N$. 
Thus, $\lambda_{1}<0$ and $\lambda_{3}>0$ on $M-N$. 
Then $det W^{+}\neq 0$. 

\end{proof}

\begin{Theorem}
Let $(M, g)$ be a compact oriented four-manifold with nonpositive scalar curvature. 
Suppose $\delta W^{+}=0$ and $det W^{+}\geq 0$, equivalently, $\lambda_{1}+\lambda_{3}\geq 0$,
where $\lambda_{i}$ be the eigenvalues of $W^{+}$ such that $\lambda_{1}\leq\lambda_{2}\leq\lambda_{3}$.
Then $(M, g)$ is anti-self-dual.

\end{Theorem}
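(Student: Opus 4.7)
The plan is by contradiction: suppose $(M,g)$ is not anti-self-dual. By the immediately preceding corollary, $\det W^{+}\not\equiv 0$, and Lemma 6 then forces $\lambda_{2}<0$ throughout the open dense set $M\setminus N$, where $N=\{W^{+}=0\}$ has codimension at least $2$ by Theorem 14. The heart of the argument will be a maximum-principle calculation applied to the middle eigenvalue $\lambda_{2}$, using the formula
\[
\Delta \lambda_{2} = 2\lambda_{2}^{2} + 4\lambda_{1}\lambda_{3} - \tfrac{\lambda_{2} s}{2} + 2(\lambda_{1}-\lambda_{2})|c|^{2} + 2(\lambda_{3}-\lambda_{2})|a|^{2}
\]
valid on the open dense set $M_{W}$.

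Two structural inputs drive the argument. First, the pointwise estimate $|W^{+}|^{2}\geq 3\lambda_{2}^{2}$ follows from $\lambda_{1}\leq\lambda_{2}\leq 0$ (hence $\lambda_{1}^{2}\geq\lambda_{2}^{2}$) together with the hypothesis $\lambda_{1}+\lambda_{3}\geq 0$, which gives $\lambda_{3}\geq -\lambda_{1}\geq|\lambda_{2}|$ and thus $\lambda_{3}^{2}\geq\lambda_{2}^{2}$; equality occurs only at points of $N$. Rewriting
\[
2\lambda_{2}^{2} + 4\lambda_{1}\lambda_{3} = 6\lambda_{2}^{2} - 2|W^{+}|^{2} \leq 0,
\]
strict wherever $W^{+}\neq 0$. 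Second, at a local maximum $p\in M_{W}$ of $\lambda_{2}$, the critical-point equation $d\lambda_{2}(p) = (\lambda_{2}-\lambda_{1})\varphi c + (\lambda_{2}-\lambda_{3})\rho a = 0$ gives, upon taking norms (using that $\rho,\varphi$ act isometrically on $T^{*}M$ as the almost-complex structures associated to self-dual $2$-forms of length $\sqrt{2}$), the constraint $(\lambda_{2}-\lambda_{1})|c| = (\lambda_{3}-\lambda_{2})|a|$. Substituting and using $\lambda_{1}+\lambda_{3}=-\lambda_{2}$, the Kato terms simplify to
\[
2(\lambda_{1}-\lambda_{2})|c|^{2} + 2(\lambda_{3}-\lambda_{2})|a|^{2} = \frac{6\lambda_{2}(\lambda_{3}-\lambda_{2})|a|^{2}}{\lambda_{2}-\lambda_{1}} \leq 0,
\]
with the degenerate case $\lambda_{1}=\lambda_{2}$ handled directly (the critical-point equation forces $a=0$, killing both Kato terms).

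Combined with $-\lambda_{2} s/2\leq 0$ (from $\lambda_{2}\leq 0$ and $s\leq 0$), every term on the right of the formula is nonpositive at $p$, and the first two sum to a strictly negative quantity whenever $\lambda_{2}(p)<0$. Hence $\Delta\lambda_{2}(p)<0$, contradicting the Hodge interior-maximum inequality $\Delta\lambda_{2}(p)\geq 0$. This rules out strictly negative local maxima of $\lambda_{2}$ in $M_{W}$; combined with Lemma 6 and the continuity of $\lambda_{2}$, this forces $\lambda_{2}\equiv 0$ on $M$, whence $\det W^{+}\equiv 0$, and Corollary 2 then gives that $(M,g)$ is anti-self-dual, the desired contradiction.

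The main obstacle I foresee is the case where the maximum of $\lambda_{2}$ is attained only at points of the singular locus $M\setminus M_{W}$, notably on the zero set $N$ (where $\lambda_{2}=0$ but the eigen-decomposition degenerates) or at other collision points; there $\lambda_{2}$ is merely Lipschitz and the explicit formula for $\Delta\lambda_{2}$ is unavailable. Because the singular set has codimension at least $2$ (Theorem 14) and $\lambda_{2}$ is globally Lipschitz, this technicality should be handled either by a viscosity-type weak maximum principle, or by a barrier/perturbation argument that shifts the extremum into the regular stratum $M_{W}$, where the Bochner computation above applies.
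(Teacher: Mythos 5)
Your pointwise computation at an interior critical point of $\lambda_{2}$ in $M_{W}$ is correct: with $d\lambda_{2}=0$ one gets $(\lambda_{2}-\lambda_{1})|c|=(\lambda_{3}-\lambda_{2})|a|$, the Kato terms collapse to $6\lambda_{2}(\lambda_{3}-\lambda_{2})|a|^{2}/(\lambda_{2}-\lambda_{1})\leq 0$, and $2\lambda_{2}^{2}+4\lambda_{1}\lambda_{3}=6\lambda_{2}^{2}-2|W^{+}|^{2}<0$ away from $N$, so $\Delta\lambda_{2}<0$ at such a point. This is essentially the computation in the paper's Lemma 6, and it is a real simplification of the paper's $\Psi\geq 0$ quadratic-form analysis --- but only because you restrict attention to points where $d\lambda_{2}=0$.

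The gap is in the deduction ``no strictly negative local maxima of $\lambda_{2}$ in $M_{W}$, hence $\lambda_{2}\equiv 0$.'' By Lemma 6 the supremum of $\lambda_{2}$ is $0$ and it is attained exactly on $N\subset M\setminus M_{W}$; nothing forces $\lambda_{2}$ to have an interior local maximum in $M_{W}$ at all (it can increase monotonically to $0$ as one approaches $N$), so the maximum principle never engages and no contradiction is produced. Your fallback --- pushing the extremum into $M_{W}$ because ``the singular set has codimension at least $2$'' --- rests on a misreading: Theorem 14 gives codimension $\geq 2$ only for $N=\{W^{+}=0\}$, whereas the eigenvalue-collision locus $M\setminus M_{W}$, where $\lambda_{2}$ ceases to be smooth, is merely closed and nowhere dense, and the paper explicitly allows it to contain hypersurfaces. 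This is precisely why the paper abandons the pointwise maximum principle: it proves the strict inequality $\Delta\Phi^{1/3}>0$ for $\Phi=-\lambda_{2}$ at \emph{every} point of $M_{W}$, not just at critical points (which is what forces the exponent $1/3$ and the verification that the quadratic form $A|a|^{2}+B|c|^{2}+2C\langle a,c\rangle$ has $A\geq 0$ and $AB-C^{2}=0$), and then runs an integration-by-parts argument over the exhaustion $Y_{\varepsilon}$, controlling the boundary terms on $\partial M_{W}\setminus N$ via the one-sided normal derivative of $2\lambda_{1}+\lambda_{3}$ and the contribution near $N$ via its codimension. Some global step of this kind is indispensable, and your proposal does not supply it.
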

\begin{proof}
We follow the method by Guan [11].
Using Lemma 2, the subharmonic function $\Phi$ in [11] in K\"ahler-Einstein case, which is called Hong Cang Yang's function in [11], is given by 
\[\Phi=\lambda_{1}+\lambda_{3}.\]
So we use $\Phi=\lambda_{1}+\lambda_{3}=-\lambda_{2}$ for more general results. 
Following [11], [18], we consider $\Delta\Phi^{\lambda}$ for $\lambda=1/3$.
Then we have
\[\Delta\Phi^{\lambda}=\lambda\Phi^{\lambda-2}(\Phi\Delta\Phi+(1-\lambda)\Sigma|\nabla_{s}\Phi|^{2}),\]
where $\Delta=\delta d$. 
We consider the set $M_{W}$, where the number of distinct eigenvalues of $W^{+}$ is locally constant. 

Suppose $(M, g)$ is not anti-self-dual. Then $M_{W}\cap N=\varnothing$.
We note that $\Phi$ is locally a Lipschitz function. 
On $M_{W}$, we have 
\[-\Delta\Phi=\Delta\lambda_{2}=2\lambda_{2}^{2}+4\lambda_{1}\lambda_{3}-\frac{\lambda_{2}s}{2}+2(\lambda_{1}-\lambda_{2})|c|^{2}+2(\lambda_{3}-\lambda_{2})|a|^{2}.\]
Since $\lambda_{2}\leq 0$ and the scalar curvature is nonpositive, $-\frac{\lambda_{2}s}{2}\leq 0$. 
We have
\[2\lambda_{2}^{2}+4\lambda_{1}\lambda_{3}=2(\lambda_{1}+\lambda_{3})^{2}+4\lambda_{1}\lambda_{3}=2\lambda_{1}^{2}+2\lambda_{3}^{2}+8\lambda_{1}\lambda_{3}.\]
Using $\lambda_{1}+\lambda_{2}+\lambda_{3}=0$, we note that $2\lambda_{1}\lambda_{3}\leq -\lambda_{3}^{2}$ from $2\lambda_{1}\leq -\lambda_{3}$
and $\lambda_{1}^{2}\leq \lambda_{3}^{2}$ from $\lambda_{1}+\lambda_{3}\geq 0$.
It follows that 
\[2\lambda_{2}^{2}+4\lambda_{1}\lambda_{3}=2\lambda_{1}^{2}+2\lambda_{3}^{2}+8\lambda_{1}\lambda_{3}\leq 4\lambda_{3}^{2}-4\lambda_{3}^{2}=0.\]
Suppose $2\lambda_{2}^{2}+4\lambda_{1}\lambda_{3}=0$ at $p$. Then we have $2\lambda_{1}+\lambda_{3}=0$ and $\lambda_{1}+\lambda_{3}=0$ at $p$. 
Thus, $p\in N$, which is a contradiction. Thus, $2\lambda_{2}^{2}+4\lambda_{1}\lambda_{3}<0$ on $M_{W}$. 
Since $|\rho a|=|a|, |\varphi c|=|c|$, we have
\[|\nabla_{s}\Phi|^{2}=(\lambda_{2}-\lambda_{1})^{2}|c|^{2}+(\lambda_{3}-\lambda_{2})^{2}|a|^{2}+2(\lambda_{2}-\lambda_{1})(\lambda_{2}-\lambda_{3})\left<\varphi c, \rho a\right>.\] 
We consider the following
\begin{align*}
&\Psi:=\lambda_{2}\left(2(\lambda_{1}-\lambda_{2})|c|^{2}+2(\lambda_{3}-\lambda_{2})|a|^{2}\right)\\
&+k(\lambda_{2}-\lambda_{1})^{2}|c|^{2}+k(\lambda_{2}-\lambda_{3})^{2}|a|^{2}+2k(\lambda_{2}-\lambda_{1})(\lambda_{2}-\lambda_{3})\left<\varphi c, \rho a\right>.
\end{align*}
Fix a point $p\in M_{W}$. Then at $p$, 
there is an orthonormal basis $\{e_{1}, e_{2}, e_{3}, e_{4}\}$
such that 
\[\varphi=e_{1}\wedge e_{2}+e_{3}\wedge e_{4}\]
\[\rho=e_{1}\wedge e_{3}+e_{4}\wedge e_{2}.\]

Let 
\[a=a_{1}e_{1}+a_{2}e_{2}+a_{3}e_{3}+a_{4}e_{4}\]
\[c=c_{1}e_{1}+c_{2}e_{2}+c_{3}e_{3}+c_{4}e_{4}.\]
Then we have 
\[\left<\rho a , \varphi c\right>=-a_{1}c_{4}-a_{2}c_{3}+a_{3}c_{2}+a_{4}c_{1}\]

If we define again 
\[c=-c_{4}e_{1}-c_{3}e_{2}+c_{2}e_{3}+c_{1}e_{4},\]
we have 
\begin{align*}
&\Psi:=\lambda_{2}\left(2(\lambda_{1}-\lambda_{2})|c|^{2}+2(\lambda_{3}-\lambda_{2})|a|^{2}\right)\\
&+k(\lambda_{2}-\lambda_{1})^{2}|c|^{2}+k(\lambda_{2}-\lambda_{3})|a|^{2}+2k(\lambda_{2}-\lambda_{1})(\lambda_{2}-\lambda_{3})\left<c, a\right>.
\end{align*}
We show for $1-\lambda=k=\frac{2}{3}$ [11], we have $\Psi\geq 0$ on $M_{W}$. 
For this, we follow the method in [19].

Let 
\begin{align*}
&A:=[2\lambda_{2}(\lambda_{3}-\lambda_{2})+k(\lambda_{2}-\lambda_{3})^{2}]\\
&B:=[2\lambda_{2}(\lambda_{1}-\lambda_{2})+k(\lambda_{2}-\lambda_{1})^{2}]\\
&C:=k(\lambda_{2}-\lambda_{1})(\lambda_{2}-\lambda_{3}).
\end{align*}
Then we have 
\[\Psi=A|a|^{2}+B|c|^{2}+2C\left<a, c\right>.\]
Suppose $A>0$. Considering $\Psi$ as a quadratic function of $|a|$, we have
\[\Psi=A\left<a+\frac{C}{A}c\right>^{2}-\frac{C^{2}}{A}|c|^{2}+B|c|^{2}.\]
From this, the minimum of $\Psi$ is given by 
\[\left(\frac{AB-C^{2}}{A}\right)|c|^{2}.\]
Thus, $\Psi\geq 0$ if $A>0$ and $AB-C^{2}\geq 0$. 

For $k=\frac{2}{3}$, we have 
\begin{align*}
&A=\frac{2}{3}(\lambda_{3}-\lambda_{2})(2\lambda_{2}+\lambda_{3})\\
&B=-\frac{2}{3}(\lambda_{2}-\lambda_{1})(2\lambda_{2}+\lambda_{1})\\
&C=\frac{2}{3}(\lambda_{2}-\lambda_{1})(\lambda_{2}-\lambda_{3}).
\end{align*}
Then 
\[A=\frac{2}{3}(\lambda_{3}-\lambda_{2})(2\lambda_{2}+\lambda_{3})=\frac{2}{3}(\lambda_{3}-\lambda_{2})(-2\lambda_{1}-\lambda_{3})\geq 0.\]
Let $\Omega$ be the set where $2\lambda_{2}+\lambda_{3}=0$. 
Then on $M_{W}-M_{W}\cap\Omega$, $A>0$. 
We have 
\begin{align*}
&AB-C^{2}=-\frac{4}{9}(\lambda_{3}-\lambda_{2})(\lambda_{2}-\lambda_{1})
\left[(2\lambda_{2}+\lambda_{3})(2\lambda_{2}+\lambda_{1})+(\lambda_{2}-\lambda_{1})(\lambda_{3}-\lambda_{2})\right]\\
&=-\frac{4}{3}\lambda_{2}(\lambda_{3}-\lambda_{2})(\lambda_{2}-\lambda_{1})(\lambda_{1}+\lambda_{2}+\lambda_{3})=0.
\end{align*}
Thus, on $M_{W}-M_{W}\cap\Omega$, $\Psi\geq 0$.

On $M_{W}\cap\Omega$, we have $2\lambda_{2}+\lambda_{3}=0$. Thus, $\lambda_{1}=\lambda_{2}$. 
It follows that $A=B=C=0$, and therefore $\Psi=0$.
Thus, on $M_{W}$, $\Psi\geq0$.
On $M_{W}$, $\Phi=-\lambda_{2}>0$ if $W^{+}\not\equiv 0$ by Lemma 6, and therefore, we have 
\[\Delta\Phi^{1/3}=\frac{1}{3}\Phi^{-5/3}(\Phi\Delta\Phi+\frac{2}{3}\Sigma|\nabla_{s}\Phi|^{2})>0.\]
Thus, $\Phi^{1/3}$ is a strictly superharmonic function on $M_{W}$. 

We note that $\lambda_{2}$ is continuous on $M$ and differentiable on $M_{W}$. 
Since $M_{W}$ is an open dense subset of $M$, each piece of $M-M_{W}$ is a hypersurface or codimension $\geq 2$.
$\lambda_{2}$ may not be differentiable on $\partial{M_{W}}$, 
but $\frac{\partial\lambda_{2}}{\partial{\nu}}$ exists on $\partial{M_{W}}-N$, where $\nu$ is the outward unit normal vector. 

Suppose $\lambda_{2}=0$ at $p\in\partial M_{W}-N$. 
Then the number of distinct eigenvalues of $W^{+}$ is 3, which implies $p\in M_{W}$, which is a contradiction. 
Thus, $\lambda_{2}<0$ on $\partial M_{W}-N$. 
Since $\lambda_{3}$ is a smooth function on $M-N$ and each piece of  $\partial{M_{W}}-N$ is counted twice with opposite directions, it follows that 
\[\int_{\partial{M_{W}}-N}(-\lambda_{2})^{-1}\frac{\partial\lambda_{3}}{\partial{\nu}}da=0.\]
On $\partial{M_{W}}-N$, $2\lambda_{1}+\lambda_{3}=0$, while $2\lambda_{1}+\lambda_{3}\leq 0$ on $M$. 
Thus, for an outward normal vector $\nu$ on $\partial{M_{W}}-N$, we have $\frac{\partial(2\lambda_{1}+\lambda_{3})}{\partial{\nu}}\geq 0$. 
Using $-\lambda_{2}=\frac{1}{2}\left(\lambda_{3}+(2\lambda_{1}+\lambda_{3})\right)$, we have 
\[\int_{\partial{M_{W}}-N}\Phi^{-1}\frac{\partial\Phi}{\partial{\nu}}da=\frac{1}{2}\int_{\partial{M_{W}}-N}(-\lambda_{2})^{-1}\frac{\partial\lambda_{3}}{\partial{\nu}}da
+\frac{1}{2}\int_{\partial{M_{W}}-N}(-\lambda_{2})^{-1}\frac{\partial (2\lambda_{1}+\lambda_{3})}{\partial{\nu}}da\]
\[=\frac{1}{2}\int_{\partial{M_{W}}-N}(-\lambda_{2})^{-1}\frac{\partial (2\lambda_{1}+\lambda_{3})}{\partial{\nu}}da\geq 0.\]

Let $Y_{\varepsilon}:=\{x\in M| dist_{g}(x, \partial M_{W})\geq\varepsilon>0\}$. 
We have 
\[\int_{Y_{\varepsilon}}\Phi^{-1/3}\Delta\Phi^{1/3}=\int_{Y_{\varepsilon}}\left<d\Phi^{1/3}, d\Phi^{-1/3}\right>d\mu
-\int_{\partial Y_{\varepsilon}}\Phi^{-1/3}\frac{\partial\Phi^{1/3}}{\partial{\nu}}da>0.\]
It follows that 
\[-\int_{\partial Y_{\varepsilon}}\Phi^{-1}\frac{\partial\Phi}{\partial{\nu}}da>\frac{1}{3}\int_{Y_{\varepsilon}}\Phi^{-2}|d\Phi|^{2}d\mu.\]
Fix $\varepsilon>0$. Then 
\[\int_{\partial Y_{\varepsilon}}\Phi^{-2}\left(\frac{\partial\Phi}{\partial{\nu}}\right)^{2}da\leq C_{\varepsilon},\]
where $C_{\varepsilon}$ is a constant which depends on $\varepsilon$.
By the Cauchy-Schwarz inequality, we have 
\[\left(\int_{\partial Y_{\varepsilon}}da\right)^{1/2}\left(\int_{\partial Y_{\varepsilon}}\Phi^{-2}\left(\frac{\partial\Phi}{\partial{\nu}}\right)^{2}da\right)^{1/2}\geq
\left|\int_{\partial Y_{\varepsilon}}\Phi^{-1}\frac{\partial\Phi}{\partial{\nu}}da\right|\]
\[\geq\frac{1}{3}\int_{Y_{\varepsilon}}\Phi^{-2}|d\Phi|^{2}d\mu\geq\frac{1}{3}\int_{\partial Y_{\varepsilon}}\Phi^{-2}|d\Phi|^{2}d\mu
\geq\frac{1}{3}\int_{\partial Y_{\varepsilon}}\Phi^{-2}\left(\frac{\partial\Phi}{\partial{\nu}}\right)^{2}da\]
We note that 
\[\left(\int_{\partial Y_{\varepsilon}}da\right)\leq C,\]
where $C>0$ is a constant which is independent of $\varepsilon$. 
Then we get, for every $\varepsilon$, 
\[\int_{\partial Y_{\varepsilon}}\Phi^{-2}\left(\frac{\partial\Phi}{\partial{\nu}}\right)^{2}da\leq 9C.\]
Thus, for every $\varepsilon$, we have
\[\left|\int_{\partial Y_{\varepsilon}}\Phi^{-1}\frac{\partial\Phi}{\partial{\nu}}da\right|\leq 3C.\]
Let $X_{\varepsilon}$ be the $\varepsilon$-neighborhood of $\partial M_{W}\cap N$. 
We have 
\[9C\geq\int_{\partial Y_{\varepsilon}}\Phi^{-2}\left(\frac{\partial\Phi}{\partial{\nu}}\right)^{2}da\geq
\int_{\partial X_{\varepsilon}}\Phi^{-2}\left(\frac{\partial\Phi}{\partial{\nu}}\right)^{2}da\]
By the Cauchy-Schwarz inequality, we have 
\[\left(\int_{\partial X_{\varepsilon}}da\right)^{1/2}\left(\int_{\partial X_{\varepsilon}}\Phi^{-2}\left(\frac{\partial\Phi}{\partial{\nu}}\right)^{2}da\right)^{1/2}\geq
\left|\int_{\partial X_{\varepsilon}}\Phi^{-1}\frac{\partial\Phi}{\partial{\nu}}da\right|\]
By the Bolzano-Weierstrass theorem, a bounded sequence has a convergent subsequence.
Thus, $\int_{\partial Y_{\varepsilon}}\Phi^{-1}\frac{\partial\Phi}{\partial{\nu}}da$ and $\int_{\partial X_{\varepsilon}}\Phi^{-1}\frac{\partial\Phi}{\partial{\nu}}da$
have convergent subsequences as $\varepsilon\to 0$.  
We have 
\[\lim_{\varepsilon\to 0}\int_{\partial Y_{\varepsilon}}\Phi^{-1}\frac{\partial\Phi}{\partial{\nu}}da
=\int_{\partial M_{W}-N}\Phi^{-1}\frac{\partial\Phi}{\partial{\nu}}da
+\lim_{\varepsilon\to 0}\int_{\partial X_{\varepsilon}}\Phi^{-1}\frac{\partial\Phi}{\partial{\nu}}da.\]
Since $\int_{\partial M_{W}-N}\Phi^{-1}\frac{\partial\Phi}{\partial{\nu}}da\geq 0$, 
we have 
\[-\lim_{\varepsilon\to 0}\int_{\partial X_{\varepsilon}}\Phi^{-1}\frac{\partial\Phi}{\partial{\nu}}da
\geq -\lim_{\varepsilon\to 0}\int_{\partial Y_{\varepsilon}}\Phi^{-1}\frac{\partial\Phi}{\partial{\nu}}da\geq\frac{1}{3}\int_{M_{W}}\Phi^{-2}|d\Phi|^{2}d\mu.\]
Then we have
\[\lim_{\varepsilon\to 0}\left(\int_{\partial X_{\varepsilon}}da\right)^{1/2}\lim_{\varepsilon\to 0}\left(\int_{\partial X_{\varepsilon}}\Phi^{-2}\left(\frac{\partial\Phi}{\partial{\nu}}\right)^{2}da\right)^{1/2}\]
\[\geq \lim_{\varepsilon\to 0}\left|\int_{\partial X_{\varepsilon}}\Phi^{-1}\frac{\partial\Phi}{\partial{\nu}}da\right|\geq\frac{1}{3}\int_{M_{W}}\Phi^{-2}|d\Phi|^{2}d\mu\]
Since $\int_{\partial X_{\varepsilon}}\Phi^{-2}\left(\frac{\partial\Phi}{\partial{\nu}}\right)^{2}da$ is bounded 
and $\lim_{\varepsilon\to 0}\left(\int_{\partial X_{\varepsilon}}da\right)^{1/2}=0$, we get $\Phi^{-2}|d\Phi|^{2}=0$. 
Since $\Phi>0$ on $X$, $d\Phi=0$ on $M_{W}$, which is a contradiction since $\Phi^{1/3}$ is strictly superharmonic on $M_{W}$. 
Thus, $(M, g)$ is anti-self-dual.
\end{proof}

\begin{Theorem}
Let $(M, g, \omega)$ be a compact, oriented, four-manifold with nonnegative scalar curvature. 
Suppose $\delta W^{+}=0$ and $\frac{s}{12}\leq\lambda_{1}+\lambda_{3}$, 
where $\lambda_{i}$ are the eigenvalues of $W^{+}$ such that $\lambda_{1}\leq\lambda_{2}\leq\lambda_{3}$. 
Then 
\begin{itemize}
\item $(M, g)$ is anti-self-dual with zero scalar curvature; or
\item $b_{+}(M)=1$ and $(M, g)$ is K\"ahler with positive constant scalar curvature; or
\item $b_{+}(M)=0$ and the double cover of $(M, g)$ is K\"ahler metric with positive constant scalar curvature. 
\end{itemize}
\end{Theorem}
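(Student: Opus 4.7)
The plan is to adapt the Hong Cang Yang function method of Theorem 15 to the nonnegative scalar curvature setting. First, if $W^+\equiv 0$ then the pinching $s/12\le \lambda_1+\lambda_3=0$ combined with $s\ge 0$ forces $s\equiv 0$, yielding the first bullet. So assume $W^+\not\equiv 0$. The pinching together with $\lambda_1+\lambda_2+\lambda_3=0$ gives $\lambda_2\le -s/12\le 0$, hence $\lambda_1\le 0\le\lambda_3$ and $\det W^+=\lambda_1\lambda_2\lambda_3\ge 0$ pointwise. Lemma 6 applies (its hypothesis $\lambda_1+\lambda_3\ge 0$ is satisfied) and gives $\lambda_2<0$ on $M\setminus N$, so $\Phi:=-\lambda_2$ is strictly positive there.

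The key pointwise estimate on $M_W$ is
\[
2\lambda_2^2+4\lambda_1\lambda_3-\tfrac{\lambda_2 s}{2}\;\le\;0,
\]
with equality if and only if $\lambda_1=\lambda_2=-s/12$ and $\lambda_3=s/6$ (the K\"ahler spectrum). Indeed, the ordering $\lambda_1\le\lambda_2$ forces $\lambda_3-\lambda_1\ge -3\lambda_2$, so
\[
2\lambda_2^2+4\lambda_1\lambda_3 \;=\; 3\lambda_2^2-(\lambda_3-\lambda_1)^2 \;\le\; -6\lambda_2^2,
\]
while the pinching $s\le -12\lambda_2$ gives $-\lambda_2 s/2\le 6\lambda_2^2$. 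Multiplying by $\lambda_2\le 0$, the first piece of the decomposition
\[
\Phi\,\Delta\Phi+\tfrac{2}{3}|\nabla\Phi|^2 \;=\; \lambda_2\bigl(2\lambda_2^2+4\lambda_1\lambda_3-\tfrac{\lambda_2 s}{2}\bigr)+\Psi
\]
is $\ge 0$, where $\Psi$ is the quadratic form from the proof of Theorem 15 (the matrix entries $A,B,C$ there do not involve $s$, so $\Psi\ge 0$ with $k=2/3$ by the same algebra). Hence $\Delta\Phi^{1/3}\ge 0$ on $M_W$.

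I then run the integration-by-parts argument of Theorem 15 verbatim: the boundary terms on $\partial M_W\setminus N$ are controlled by Cauchy--Schwarz together with Bolzano--Weierstrass, and the $\varepsilon$-neighborhood of $\partial M_W\cap N$ contributes negligibly because $N$ has codimension $\ge 2$ by B\"ar--Polombo (Theorem 14). The conclusion is $d\Phi\equiv 0$ on $M_W$, hence $\Phi$ is constant on $M$ by continuity. Since $\Phi>0$ on $M\setminus N$, the constant value is some $c>0$, forcing $N=\varnothing$. Equality in the pointwise estimate then holds at every point, which yields the K\"ahler spectrum $\lambda_1=\lambda_2=-s/12$, $\lambda_3=s/6$ and the constant positive scalar curvature $s\equiv 12c$. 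In particular $|W^+|^2\equiv 6c^2$ is constant, so Proposition 3 produces the second or third bullet.

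The main obstacle is verifying that the delicate boundary analysis of Theorem 15 transfers. It does, because the $s$-dependent term is absorbed cleanly into the single pointwise inequality above, leaving an expression of exactly the same structure as in Theorem 15; the crucial weakening from $>0$ to $\ge 0$ is precisely what admits the K\"ahler alternative through the equality case, rather than forcing anti-self-duality.
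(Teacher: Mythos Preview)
Your proposal is correct and follows essentially the same route as the paper: the same Hong Cang Yang function $\Phi=-\lambda_2$, the same pointwise inequality $2\lambda_2^2+4\lambda_1\lambda_3-\tfrac{\lambda_2 s}{2}\le 0$ (you prove it via the identity $2\lambda_2^2+4\lambda_1\lambda_3=3\lambda_2^2-(\lambda_3-\lambda_1)^2$, the paper via the chain $-4\lambda_1^2\le -4\lambda_2^2$ and $4\lambda_1\le 4\lambda_2$, but the equality case $\lambda_1=\lambda_2=-s/12$ is identical), and the same appeal to the boundary/integration machinery of Theorem~15. The only cosmetic difference is that you read off $s\equiv 12c$ directly from $\Phi$ constant plus the equality case and then invoke Proposition~3, whereas the paper instead plugs the K\"ahler spectrum into the Weitzenb\"ock formula $\Delta|W^+|^2=-2|\nabla W^+|^2+36\det W^+ - s|W^+|^2$ to force $s$ constant and then cites Derdzi\'nski and LeBrun; your shortcut is cleaner.
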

\begin{proof}
Suppose $(M, g)$ is not anti-self-dual. Then $M_{W}\cap N=\varnothing$, where $N$ is the zero set of $W^{+}$. 
Since $s\geq 0$, $\lambda_{1}+\lambda_{3}\geq 0$ and therefore, $\lambda_{2}\leq 0$.
By Lemma 6, $\lambda_{2}<0$ on $M-N$. 
We follow the proof of Theorem 15. 
Since $\lambda_{1}\leq\lambda_{2}\leq 0$ and $\lambda_{2}\leq-\frac{s}{12}$, 
we have 
\begin{align*}
&2\lambda_{2}^{2}+4\lambda_{1}\lambda_{3}-\frac{\lambda_{2}s}{2}=2\lambda_{2}^{2}-4\lambda_{1}^{2}-4\lambda_{1}\lambda_{2}-\frac{\lambda_{2}s}{2}\\
&\leq 2\lambda_{2}^{2}-4\lambda_{2}^{2}-4\lambda_{1}\lambda_{2}-\frac{\lambda_{2}s}{2}
=-\lambda_{2}\left(2\lambda_{2}+4\lambda_{1}+\frac{s}{2}\right)
\leq-\lambda_{2}\left(6\lambda_{2}+\frac{s}{2}\right)\leq 0.\\
\end{align*}
From this, we get $\Delta\Phi^{1/3}\geq 0$. 
Then by the same argument with Theorem 15, we get $\Delta\Phi^{1/3}=0$ on $M_{W}$.
Since $\lambda_{2}<0$ on $M_{W}$ and $\Phi\Delta\Phi-\Psi=0$,
it follows that $\lambda_{1}=\lambda_{2}=-\frac{s}{12}$ on $M_{W}$.
Since $\lambda_{2}+\frac{s}{12}$ is continuous on $M$ and $\lambda_{2}+\frac{s}{12}=0$ on $M_{W}$, 
it follows that $\lambda_{2}+\frac{s}{12}=0$ on $M$ and similarly, $\lambda_{1}+\frac{s}{12}=0$ on $M$.
Then $\lambda_{3}=\frac{s}{6}$. 
Since 
\[\Delta|W^{+}|^{2}=-2|\nabla W^{+}|^{2}+36det W^{+}-s|W^{+}|^{2},\]
 we get $s$ is constant.
The result follows from [7], [15].
\end{proof}

\begin{Theorem}
(Guan [11]) Let $(M, g, \omega)$ be a compact K\"ahler-Einstein surface with nonpositive scalar curvature. 
Suppose 
\[H_{av}-H_{min}\leq \frac{1}{2}(H_{max}-H_{min}),\]
where $H_{max}(H_{min})$ denotes the maximum(minimum) of the holomorphic sectional curvature at each point respectively 
and $H_{av}$ the average
of the holomorphic sectional curvature at each point. 
Then $M$ is a compact quotient of complex 2-dimensional unit ball or plane. 
\end{Theorem}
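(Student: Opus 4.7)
The plan is to reduce the theorem to Theorem 15 applied with the opposite orientation, and then identify a K\"ahler--Einstein surface with $W^{-}\equiv 0$ and nonpositive scalar curvature with the standard complex space forms.

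First I would translate the curvature hypothesis into eigenvalue data on $W^{-}$. By Lemma 3, a K\"ahler--Einstein surface satisfies
\[
H_{max}=\frac{s}{6}+\frac{\lambda_{3}}{2},\qquad H_{min}=\frac{s}{6}+\frac{\lambda_{1}}{2},\qquad H_{av}=\frac{s}{6},
\]
where $\lambda_{1}\leq\lambda_{2}\leq\lambda_{3}$ are the eigenvalues of $W^{-}$. Substituting these into $H_{av}-H_{min}\leq\frac{1}{2}(H_{max}-H_{min})$ and simplifying gives $\lambda_{1}+\lambda_{3}\geq 0$, equivalently $\lambda_{2}\leq 0$, equivalently $\det W^{-}\geq 0$ (as observed in the introduction).

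Next I would pass to the opposite orientation $\overline{M}$. Since $g$ is Einstein, the second Bianchi identity gives $\delta W^{+}=\delta W^{-}=0$, so $(\overline{M},g)$ is a compact oriented four-manifold with harmonic self-dual Weyl curvature (its $W^{+}$ is the old $W^{-}$), with the same nonpositive scalar curvature and with $\det W^{+}_{\overline{M}}\geq 0$. Theorem 15 then applies to $(\overline{M},g)$ and yields that $(\overline{M},g)$ is anti-self-dual, i.e.\ $W^{-}\equiv 0$ in the original orientation.

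Finally I would identify the metric. With $\lambda_{1}=\lambda_{2}=\lambda_{3}=0$, Lemma 3 gives $H_{max}=H_{min}=\tfrac{s}{6}$, so $(M,g)$ has constant holomorphic sectional curvature $\tfrac{s}{6}\leq 0$. The universal cover of a K\"ahler surface of constant holomorphic sectional curvature $c$ is $\mathbb{C}^{2}$ when $c=0$ and the complex $2$-ball with its Bergman metric when $c<0$, so $M$ is a compact quotient of one of these two model spaces. The only real work in the argument is the reduction to Theorem 15; once the eigenvalue translation via Lemma 3 is in hand, the expected obstacle is simply verifying that the hypotheses of Theorem 15 transfer cleanly to the reversed orientation, which is immediate since the Einstein condition and the scalar curvature are orientation-independent.
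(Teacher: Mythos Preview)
Your proposal is correct and follows essentially the same route as the paper: translate the holomorphic-sectional-curvature inequality via Lemma 3 into $\lambda_{1}+\lambda_{3}\geq 0$ for $W^{-}$, use the Einstein condition to get $\delta W^{-}=0$, apply Theorem 15 on the reversed orientation to conclude $W^{-}\equiv 0$, and then invoke Lemma 3 again to obtain constant holomorphic sectional curvature. The paper's proof is terser (it leaves the orientation reversal and the final space-form identification implicit), but the logic is identical.
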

\begin{proof}
By Lemma 3,  
\[H_{av}-H_{min}\leq \frac{1}{2}(H_{max}-H_{min})\]
is equivalent to $\lambda_{1}+\lambda_{3}\geq 0$, where $\lambda_{i}$ are eigenvalues of $W^{-}$ such that 
$\lambda_{1}\leq \lambda_{2}\leq \lambda_{3}$ and therefore $det W^{-}\geq 0$. 
Since an Einstein metric implies $\delta W^{\pm}=0$, we get $(M, g)$ is self-dual from Theorem 15. 
Thus, $(M, g, \omega)$ is a self-dual K\"ahler-Einstein surface with nonpositive scalar curvature.
By Lemma 3, $(M, g, \omega)$ has constant holomorphic sectional curvature. 
\end{proof}

\vspace{20pt}

$\mathbf{Acknowledgements}$: The author is very thankful to the referee for pointing out
errors in the first version of this paper.
The author would like to thank Prof. Claude LeBrun for precious comments.

\newpage

\end{document}